\def\ds{\displaystyle}
\def\eps{{\varepsilon}}
\def\N{\mathbb{N}}
\def\O{\Omega}
\def\R{\mathbb{R}}
\newcommand{\be}{\begin{equation}}
\newcommand{\ee}{\end{equation}}
\newcommand{\ind}{\mathbbm{1}}
\theoremstyle{plain}
\newtheorem{teo}{Theorem}[section]
\newtheorem{lm}[teo]{Lemma}
\newtheorem{prop}[teo]{Proposition}
\newtheorem{coro}[teo]{Corollary}
\theoremstyle{definition}
\newtheorem{oss}[teo]{Remark}
\def\Xint#1{\mathchoice
   {\XXint\displaystyle\textstyle{#1}}%
   {\XXint\textstyle\scriptstyle{#1}}%
   {\XXint\scriptstyle\scriptscriptstyle{#1}}%
   {\XXint\scriptscriptstyle\scriptscriptstyle{#1}}%
   \!\int}
\def\XXint#1#2#3{{\setbox0=\hbox{$#1{#2#3}{\int}$}
     \vcenter{\hbox{$#2#3$}}\kern-.5\wd0}}
\def\aver#1{\Xint-_{#1}}
\DeclareMathOperator{\dive}{div}
\newcounter{cte}
\numberwithin{equation}{section}
\begin{document}

\author{Baptiste Trey}

\title[Lipschitz continuity of the eigenfunctions]{Lipschitz continuity of the eigenfunctions on optimal sets for functionals with variable coefficients}

\begin{abstract}
	This paper is dedicated to the spectral optimization problem 
	\begin{equation*}
	\min \big\{ \lambda_1(\O)+\cdots+\lambda_k(\O) + \Lambda|\O| \ : \ \O \subset D \text{ quasi-open} \big\}
	\end{equation*}
	where $D\subset\R^d$ is a bounded open set and $0<\lambda_1(\O)\leq\cdots\leq\lambda_k(\O)$ are the first $k$ eigenvalues on $\O$ of an operator in divergence form with Dirichlet boundary condition and H\"{o}lder continuous coefficients.
	We prove that the first $k$ eigenfunctions on an optimal set for this problem are locally Lipschtiz continuous in $D$ and, as a consequence, that the optimal sets are open sets. We also prove the Lipschitz continuity of vector-valued functions that are almost-minimizers of a two-phase functional with variable coefficients.
\end{abstract}

\date{\today}
\maketitle
\tableofcontents

\section{Introduction and main results}

Let $D$ be a bounded open subset of $\R^d$ and $\Lambda$ be a positive constant.
We consider the spectral optimization problem 
\begin{equation}\label{e:shapeopt}
\min \big\{ \lambda_1(\O)+\cdots+\lambda_k(\O) + \Lambda|\O| \ : \ \O \subset D \text{ quasi-open} \big\}
\end{equation}
where $0<\lambda_1(\O)\leq\cdots\leq\lambda_k(\O)$ denote the first $k$ eigenvalues, counted with the due multiplicity, of the operator in divergence form $-b(x)^{-1}\text{div}\,(A_x\nabla\cdot)$. This means that for every $\lambda_i(\Omega)$ there is an eigenfunction $u_i\in H^1_0(\Omega)$ such that 
\begin{equation}\label{e:defoperator}
\left\{
      \begin{aligned}
        -\dive(A\nabla u_i) &= \lambda_i(\Omega) \,b\,u_i & &\text{ in } \O \\
        u_i&=0 & &\text{ on } \partial\O. \\
      \end{aligned}
    \right.
\end{equation}

The aim of the present paper is twofold. From one side, we prove a Lipschitz regularity result for vector-valued functions which are almost-minimizers of a two-phase functional with variable coefficients (Theorem \ref{t:main2}). On the other hand, we show that if $\O^\ast$ is an optimal set for \eqref{e:shapeopt}, then the vector $U=(u_1,\dots,u_k)$ of the first $k$ eigenfunctions on $\O^\ast$ satisfies the almost-minimality condition of Theorem \ref{t:main2}, and hence that the eigenfunctions $u_1,\dots,u_k$ are Lipschitz continuous. 

We first state our Lipschitz regularity result for  eigenfunctions on optimal sets for \eqref{e:shapeopt}. 

\begin{teo}\label{t:main1}
Let $D\subset\R^d$ be a bounded open set and let $\Lambda>0$.  Let $A: D\rightarrow\text{Sym}_d^+$ be a matrix valued function satisfying \eqref{e:holderA} and \eqref{e:ellipA} and let $b \in L^\infty(D)$ be a function satisfying \eqref{e:hypfctb} (see below). 
Then the spectral optimization problem \eqref{e:shapeopt} admits a solution $\Omega^\ast$. Moreover, the first $k$ eigenfunctions on any optimal set $\O^\ast$ are locally Lipschitz continuous in $D$. As a consequence, every optimal set for \eqref{e:shapeopt} is an open set. 
\end{teo}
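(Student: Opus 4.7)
\medskip
\noindent\textbf{Proof proposal.}
The plan is to prove Theorem~\ref{t:main1} by reducing it to the already announced regularity result for vector-valued almost-minimizers of a two-phase functional with variable coefficients (Theorem~\ref{t:main2}). The proof naturally splits into three steps: (i) existence of an optimizer, (ii) showing that the vector of eigenfunctions on an optimal set satisfies the almost-minimality condition of Theorem~\ref{t:main2}, and (iii) deducing Lipschitz continuity and openness.

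\smallskip
\noindent\emph{Step 1: Existence.} For a quasi-open $\O\subset D$ the eigenvalues $\lambda_i(\O)$ of the operator $-b^{-1}\dive(A\nabla\,\cdot)$ are defined by the usual min--max formula on $H^1_0(\O)$ with respect to the weighted inner product $\int_\O b\,uv\,dx$. Under the ellipticity assumption on $A$ and the uniform bounds on $b$ the functional $\O\mapsto\lambda_1(\O)+\cdots+\lambda_k(\O)$ is decreasing with respect to set inclusion and lower semicontinuous for $\gamma$-convergence; since $|\O|$ is also l.s.c.\ for $\gamma$-convergence among sets contained in the bounded domain $D$, the Buttazzo--Dal~Maso existence scheme, adapted to variable coefficients, yields an optimal set $\O^\ast$.

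\smallskip
\noindent\emph{Step 2: Almost-minimality of $U=(u_1,\dots,u_k)$.} This is the heart of the argument. Fix $x_0\in D$ and $r$ small enough that $B_r(x_0)\subset D$, and let $\widetilde U=(\widetilde u_1,\dots,\widetilde u_k)\in H^1(B_r(x_0);\R^k)$ agree with $U$ on $\partial B_r(x_0)$. Extending each $\widetilde u_i$ by $u_i$ outside $B_r(x_0)$ produces a competitor; the candidate set is $\widetilde\O:=\{|\widetilde U|>0\}\cup(\O^\ast\setminus B_r(x_0))$. The key step is to bound $\sum_i\lambda_i(\widetilde\O)$ from above via the Rayleigh quotients of a suitable $L^2(b\,dx)$-orthonormalization of the $\widetilde u_i$'s on $\widetilde\O$. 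Since $U$ is already orthonormal and the perturbation takes place in $B_r(x_0)$, the Gram matrix of $\widetilde U$ differs from the identity by a term of order $\int_{B_r(x_0)}b\,|\widetilde U-U|\,|\widetilde U+U|\,dx$, which can be absorbed. Combining this with the minimality of $\O^\ast$ in \eqref{e:shapeopt} and Hölder-freezing the coefficients $A(x)\approx A(x_0)$, $b(x)\approx b(x_0)$ on $B_r(x_0)$, one obtains an inequality of the form
\begin{equation*}
\int_{B_r(x_0)} \langle A(x_0)\nabla U,\nabla U\rangle\,dx + \Lambda \big|\{|U|>0\}\cap B_r(x_0)\big|
\leq \int_{B_r(x_0)} \langle A(x_0)\nabla \widetilde U,\nabla \widetilde U\rangle\,dx + \Lambda \big|\{|\widetilde U|>0\}\cap B_r(x_0)\big| + \omega(r),
\end{equation*}
with an error $\omega(r)=Cr^{d+\alpha}$ coming from the Hölder exponent $\alpha$ of $A$ and $b$ and from the lower-order term $\sum_i\lambda_i(\O)\int b\,u_i^2$. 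This is precisely the almost-minimality condition required in Theorem~\ref{t:main2}.

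\smallskip
\noindent\emph{Step 3: Lipschitz continuity and openness.} Applying Theorem~\ref{t:main2} to $U$ gives that each $u_i$ is locally Lipschitz in $D$. In particular the $u_i$ are continuous, so $\Omega_U:=\{|U|>0\}=\bigcup_{i=1}^k\{u_i\neq 0\}$ is open. Since $u_i\in H^1_0(\O^\ast)$ vanishes quasi-everywhere on $D\setminus\O^\ast$, we have $\Omega_U\subset\O^\ast$ up to a set of zero capacity; conversely, by the unique continuation principle $\O^\ast\setminus\Omega_U$ has zero Lebesgue measure, so replacing $\O^\ast$ by $\Omega_U$ yields an open optimal set without increasing the functional in \eqref{e:shapeopt}.

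\smallskip
\noindent\emph{Main obstacle.} The delicate point is Step~2: comparing the sum of the first $k$ eigenvalues on $\O^\ast$ and on $\widetilde\O$. One must not only construct $k$ admissible competitors from $\widetilde U$ but also re-orthonormalize them in $L^2(b\,dx)$ while tracking all error terms carefully enough that the perturbation induced by this Gram-matrix correction, together with the freezing of the Hölder coefficients, collapses into an admissible $o(r^d)$ term. This is a standard but nontrivial computation in spectral shape optimization, and handling the variable weight $b$ is what distinguishes it from the Laplacian case.
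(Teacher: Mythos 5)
Your overall architecture (existence, then almost-minimality of $U$, then invoke Theorem \ref{t:main2}) is the same as the paper's, but your Step 2 contains a genuine gap. The hypothesis \eqref{e:main2b} of Theorem \ref{t:main2} is a \emph{multiplicative} inequality with the \emph{variable} coefficients: the error is the factor $\big(1+C\|U-\tilde U\|_{L^1}\big)$ multiplying $\int_D A\nabla\tilde U\cdot\nabla\tilde U$. What you claim to derive instead is a frozen-coefficient inequality with a purely \emph{additive} error $\omega(r)=Cr^{d+\alpha}$, and you assert this is "precisely" the hypothesis of Theorem \ref{t:main2}; it is not, and more importantly it cannot be established at this stage. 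Freezing $A(x)\approx A(x_0)$ on $B_r(x_0)$ produces relative errors of size $r^{\delta_{\text{\tiny\sc A}}}\int_{B_r}|\nabla U|^2$ and $r^{\delta_{\text{\tiny\sc A}}}\int_{B_r}|\nabla\tilde U|^2$, which are $o(r^d)$ only if $U$ (and the competitor) are already known to have Lipschitz-type energy bounds — exactly what the theorem is supposed to prove, so the argument is circular. Likewise, the Gram-matrix correction is of order $\|U-\tilde U\|_{L^1}$, which under the sole assumption $\|\tilde U\|_{L^\infty}\le C_1$ is only $O(r^d)$ (not $O(r^{d+\alpha})$), and it multiplies the full Dirichlet energy of the competitor, which is not a priori bounded; so it is intrinsically a multiplicative error, not an admissible additive one. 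In the paper the $r^{d+\alpha}$ improvement appears only later, inside the proof of Theorem \ref{t:main2} (Lemma \ref{l:estuxhxr}), where it uses the already-established H\"older continuity of $u$ and a special competitor (the harmonic replacement) together with a sign condition that makes the measure terms cancel. The correct and much simpler statement to prove in Step 2 is exactly the paper's Proposition \ref{p:quasiminU}: use the variational characterization \eqref{e:carU} of $U$, re-orthonormalize $\tilde U$ by Gram--Schmidt (Lemma \ref{l:ortho}) with the estimate $\int A\nabla V\cdot\nabla V\le(1+\overline C_k\|U-\tilde U\|_{L^1})\int A\nabla\tilde U\cdot\nabla\tilde U$ and $\{|V|>0\}\subset\{|\tilde U|>0\}$, and stop there — no localization and no coefficient freezing is needed, since the reduction to constant coefficients is carried out inside the proof of Theorem \ref{t:main2} itself.

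Two smaller points. You never verify hypothesis \eqref{e:main2a}: one needs $-\dive(A\nabla U)=f$ with $f\in L^\infty$, which here is $f_i=\lambda_i(\O^\ast)\,b\,u_i$ and requires the $L^\infty$ bound on the eigenfunctions (the paper's Lemma \ref{l:boundedness}); this bound is also what legitimizes restricting to competitors with $\|\tilde U\|_{L^\infty}\le C_1$. In Step 3, the fact that $|\O^\ast\setminus\{|U|>0\}|=0$ does not follow from unique continuation (the optimal set is only quasi-open and need not be connected); it follows directly from optimality, since $u_i\in H^1_0(\{|U|>0\})$ once $U$ is continuous, so $\{|U|>0\}$ is an admissible competitor with the same eigenvalue sum and smaller volume. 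Your Step 1 via the Buttazzo--Dal Maso scheme is an acceptable alternative to the paper's direct minimizing-sequence argument with the eigenfunction vectors, though it requires knowing that the weighted, variable-coefficient eigenvalues are $\gamma$-lower semicontinuous, which the paper avoids by arguing by hand.
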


In \cite{briancon-hayouni-pierre-05}, Briancon, Hayouni and Pierre proved the Lipschitz continuity of the first eigenfunction on an optimal set which minimizes the first eigenvalue of the Dirichlet Laplacian among all sets of prescribed volume included in a box. Their proof, which is inspired by the pioneering work of Alt and Caffarelli in \cite{alt-caffarelli-81} on the regularity for a free boundary problem, relies on the fact that the first eigenfunction is the minimum of a variational problem. 
For spectral optimization problems involving higher eigenvalues, the study of the regularity of the optimal sets and the corresponding eigenfunctions is more involved due to the variational characterization of the eigenvalue $\lambda_k$ through a min-max procedure. 
In \cite{bucur-mazzoleni-pratelli-velichkov-15} the authors considered the spectral functionals $F(\lambda_1(\O),\dots,\lambda_k(\O))$ which are bi-Lipschitz with respect to each eigenvalue $\lambda_i(\O)$ of the Dirichlet Laplacian, a typical example being the sum of the first $k$ eigenvalues. 
In particular, they proved the Lipschitz continuity of the eigenfunctions on optimal sets minimizing the sum $\lambda_1(\O)+\dots+\lambda_k(\O)$ among all shapes $\O\subset\R^d$ of prescribed measure (see \cite[Theorem 6.1]{bucur-mazzoleni-pratelli-velichkov-15}). The present paper extends this result to the case of an operator with variable coefficients, but with a completely different proof. 

Concerning spectral optimization problems involving an operator with variable coefficients, a regularity result has been obtained in \cite{russ-trey-velichkov-19}, where the authors consider the problem of minimizing the first eigenvalue of the operator with drift $-\Delta+\nabla\Phi\cdot\nabla$, $\Phi\in W^{1,\infty}(D,\R^d)$, under inclusion and volume constraints. We stress out that our result also applies to this operator with drift since it corresponds to the special case where $A=e^{-\Phi}\text{Id}$ and $b=e^{-\Phi}$. We would like also to mention a recent work of Lamboley and Sicbaldi in \cite{lamboley-sicbaldi-19} where they prove an existence and regularity result for Faber-Krahn minimizers in a Riemanninan setting.

Let us highlight that the Lipschitz regularity of the eigenfunctions in Theorem \ref{t:main1} turned out to be a quite difficult question due to both the min-max nature of the eigenvalues and the presence of the variable coefficients, but it is an important first step for the analysis of the regularity of the free boundary of the optimal shapes for \eqref{e:shapeopt} which we study in \cite{trey-20}.
\medskip
 
As already pointed out, the proof of Theorem \ref{t:main1} goes through the study of the Lipschitz regularity of vector-valued almost-minimisers for a two-phase functional with variable coefficients. 
%Such almost-minimality conditions naturally arise in shape optimization problems involving Dirichlet eigenvalues. 
Our approach is to reduce from the non-constant coefficients case to the constant coefficients-one by a change of variables and is inspired by \cite{spolaor-trey-velichkov-19}, where the authors prove free boundary regularity of almost-minimizers of the one-phase and two-phase functionals in dimension $2$ using an epiperimetric inequality. 
%In fact, the almost-minimizers for the functional with variable coefficients, in a new set of coordinates, becomes an almost-minimizer for the functional with constant coefficients, and 
The second contribution which was a strong inspiration for our work is of David and Toro in \cite{david-toro-15}. They in particular prove the Lipschitz regularity of almost-minimizers of the one-phase and the two-phase functionals with constant coefficients (see also \cite{david-engelstein-toro-19} for free boundary regularity results).

%The first step in the proof of Theorem \ref{t:main1}  is to change the variables to reduce to the case where the operator is $A=\text{Id}$. This idea was already used in \cite{spolaor-trey-velichkov-19} where Spolaor, Trey and Velichkov proved in dimension $2$ a $C^{1,\alpha}$-regularity result to a multi-shape optimization problem involving the first eigenvalue of the Dirichlet Laplacian. This leads to the study of quasi-minimizers of the Dirichlet energy, which is the aim of our second main result stated below.

We have the following result for almost-minimizers of the two-phase functional.

\begin{teo}\label{t:main2}
Let $D\subset \R^d$ be a bounded open set and let $\O\subset D$ be a quasi-open set. Let $A: D\rightarrow\text{Sym}_d^+$ be a matrix valued function satisfying \eqref{e:holderA} and \eqref{e:ellipA}. Let $f=(f_1,\dots,f_k) \in L^\infty(D,\R^k)$. Assume that $U=(u_1,\dots,u_k) \in H^1_0(\O,\R^k)$ is a vector-valued function such that
\begin{itemize}
\item $U$ is a solution of the equation
\begin{equation}\label{e:main2a}
-\dive(A\nabla U) = f \quad\text{in}\quad\O,
\end{equation}
\item $U$ satisfies the following quasi-minimality condition: for every $C_1>0$, there exist constants $\eps\in(0,1)$ and $C>0$ such that
\begin{equation}\label{e:main2b}
\int_DA\nabla U\cdot\nabla U + \Lambda|\{|U|>0\}| \leq \big(1+C\|U-\tilde{U}\|_{L^1}\big)\int_DA\nabla\tilde{U}\cdot\nabla\tilde{U} + \Lambda|\{|\tilde{U}|>0\}|,
\end{equation}
for every $\tilde{U}\in H^1_0(D,\R^k)$ such that $\|U-\tilde{U}\|_{L^1}\leq\eps$ and $\|\tilde{U}\|_{L^\infty}\leq C_1$.
\end{itemize}
Then the vector-valued function $U$ is locally Lipschitz continuous in $D$.
\end{teo}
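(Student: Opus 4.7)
My plan is to reduce to the constant-coefficient case via a change of variables freezing $A$ at a point, and then adapt the harmonic-replacement arguments of David--Toro for two-phase almost-minimizers to the vector-valued setting. The final Lipschitz bound follows once I establish a linear growth estimate $\|U\|_{L^\infty(B_r(x_0))}\leq C r$ at every $x_0\in D$ and small $r$: away from the zero set $\{|U|=0\}$ standard interior elliptic estimates for \eqref{e:main2a} with H\"older $A$ and $L^\infty$ datum $f$ give $C^{1,\alpha}$ regularity, and the growth bound handles points of the free boundary $\partial\{|U|>0\}$, patching together into local Lipschitz continuity.

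Fix $x_0\in D$ and introduce $V(y):=U(x_0+A(x_0)^{1/2}y)$, which solves $-\dive(\tilde A\nabla V)=\tilde f$ with $\tilde A(0)=\mathrm{Id}$ and, by \eqref{e:holderA}, $|\tilde A(y)-\mathrm{Id}|\leq C|y|^\alpha$. The almost-minimality \eqref{e:main2b} transfers to $V$ with an additional multiplicative factor of the form $(1+Cr^\alpha)$ on scale $r$, coming from both the H\"older deviation of $\tilde A$ from $\mathrm{Id}$ and the Jacobian on the measure term. In a ball $B_r$ I would use as competitor the componentwise harmonic replacement $H$ (i.e. $\Delta H=0$ in $B_r$ with $H=V$ on $\partial B_r$), extended by $V$ outside. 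Plugging $\tilde V=H$ into the transferred almost-minimality and combining with the energy identity
\begin{equation*}
\int_{B_r}|\nabla V|^2=\int_{B_r}|\nabla H|^2+\int_{B_r}|\nabla(V-H)|^2+\text{(lower-order H\"older and $f$ terms)},
\end{equation*}
one obtains a Caccioppoli-type bound
\begin{equation*}
\int_{B_r}|\nabla(V-H)|^2\leq Cr^\alpha\int_{B_r}|\nabla H|^2+Cr^d+\Lambda\bigl|\{|H|>0\}\setminus\{|V|>0\}\bigr|.
\end{equation*}
Interior gradient estimates for harmonic $H$ give $\|\nabla H\|_{L^\infty(B_{r/2})}\leq Cr^{-1}\|V\|_{L^\infty(B_r)}$, and an iteration at dyadic scales $r_j=2^{-j}r$, of Campanato type, closes into the desired linear growth.

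The principal obstacles are threefold. First, a preliminary uniform $L^\infty$ bound on $U$ is needed to invoke \eqref{e:main2b} with a definite value of $C_1$ and hence with uniform constants $\eps,C$; I would obtain this either via a Moser iteration from \eqref{e:main2a} using $\{|U|>0\}\subset D$, or by testing \eqref{e:main2b} against truncations $\tilde U=(U/|U|)\min(|U|,M)$. Second, the vector-valued structure forces working with $|V|$ and building competitors from componentwise harmonic replacements, so the positivity-set defect $|\{|H|>0\}\setminus\{|V|>0\}|$ must be carefully controlled and absorbed into the iteration rather than simply dropped. Third, and most delicately, the H\"older freezing error $Cr^\alpha$ together with the $\|V-\tilde V\|_{L^1}$ defect in \eqref{e:main2b} must both be dominated at each dyadic scale by the Campanato gain produced by harmonicity; this is the core analytic step, analogous to the balancing in \cite{david-toro-15} and \cite{spolaor-trey-velichkov-19}, but now complicated by the simultaneous presence of variable coefficients, a vector-valued target, and the right-hand side $f$.
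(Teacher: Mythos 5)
There is a genuine gap: your scheme has no mechanism to handle the two-phase points of a component $u_i$, i.e.\ points where $u_i$ changes sign, its trace average on $\partial B_r$ is small, but its Dirichlet energy is large. The quasi-minimality \eqref{e:main2b} tested with a componentwise harmonic replacement only yields, after freezing the coefficients, an estimate of the form $\int_{B_r}|\nabla u_x|^2\le(1+Cr^{\delta_{\text{\tiny\sc A}}})\int_{B_r}|\nabla h_r|^2+Cr^d$ (the measure defect is of order $r^d$ and gives no gain there, since both positivity sets essentially fill the ball), and iterating this at dyadic scales only gives $\omega(u_x,s)\le C\,\omega(u_x,r)+C\log(r/s)$, i.e.\ a logarithmic modulus of continuity and H\"older continuity for every exponent, \emph{not} Lipschitz continuity (this is exactly Lemma \ref{l:estcontu} and Propositions \ref{p:contu}--\ref{p:holcontu} in the paper). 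At a sign-changing point the harmonic replacement retains large boundary data of both signs, so its energy is not small and your Campanato-type iteration does not close: energy comparison controls, roughly, the difference of the two phases' slopes, while the Lipschitz bound needs control of their product. The tool that supplies this — and the heart of the paper's proof — is the Alt--Caffarelli--Friedman-type almost-monotonicity formula for variable-coefficient divergence-form operators of Matevosyan--Petrosyan \cite{matevosyan-petrosyan-11} (Proposition \ref{p:monotony}, Corollary \ref{c:monotony}), which is where assumption \eqref{e:main2a} with $f\in L^\infty$ is really used. With it the paper shows (Proposition \ref{p:case2}) that when the boundary average $b(u_x,r)$ is small compared with $r\,\omega(u_x,r)$ and $\omega(u_x,r)$ is large, the energy decays geometrically, $\omega(u_x,r/3)\le\frac12\omega(u_x,r)$; the complementary case of a large boundary average forces $u$ to keep its sign in a smaller ball (Proposition \ref{p:lipcase1}), where the error in the quasi-minimality improves to $r^{d+\alpha}$ thanks to the already-established H\"older continuity and one gets Lipschitz and even $C^{1,\beta}$ bounds (Proposition \ref{p:strposlip}). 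The final proof is a four-case dyadic alternative on $\omega(u_x,r_k)$, not a linear-growth estimate for $\|U\|_{L^\infty}$. Note that David--Toro's two-phase result, which you invoke as a model, also rests on the ACF monotonicity formula; your outline never produces a substitute for it.

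A secondary issue is the patching step ``away from the zero set use interior elliptic estimates'': a priori $\O$ is only quasi-open and $\{|U|>0\}$ is not known to be open (openness of the optimal set is a \emph{consequence} of the Lipschitz regularity in Theorem \ref{t:main1}), so applying interior estimates in $\{|U|>0\}$ with a distance-to-the-zero-set radius is circular unless reformulated, e.g.\ through Lebesgue points and the averaged quantities $\omega(u_x,r)$ as the paper does. Your preliminary steps (the $L^\infty$ bound from \eqref{e:main2a}, the change of variables $F_x(\xi)=x+A_x^{1/2}\xi$, and the transfer of almost-minimality with a $(1+Cr^{\delta_{\text{\tiny\sc A}}})$ factor) do match the paper's Lemma \ref{l:degiorgi} and Proposition \ref{p:quminux}, but they only carry the argument up to H\"older continuity.
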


\begin{oss}[On the assumption \eqref{e:main2b} of Theorem \ref{t:main2}]
The quasi-minimality in Theorem \ref{t:main2} is not local but naturally arises from the shape optimization problem \eqref{e:shapeopt} (see Proposition \ref{p:quasiminU}). We stress out that our conclusion also holds, with exactly the same proof, if the quasi-minimality property \eqref{e:main2b} is replaced by its "local" version, namely: for every $C_1>0$, there exist constants $r_0\in(0,1)$ and $C>0$ such that for every $x\in D$ and every $r\leq r_0$ such that $B_r(x)\subset D$ we have
\begin{equation*}
\int_{B_r(x)}A\nabla U\cdot\nabla U + \Lambda|\{|U|>0\}\cap B_r(x)| \leq \big(1+C\|U-\tilde{U}\|_{L^1}\big)\int_{B_r(x)}A\nabla\tilde{U}\cdot\nabla\tilde{U} + \Lambda|\{|\tilde{U}|>0\}\cap B_r(x)|,
\end{equation*}
for every $\tilde{U}\in H^1_0(D,\R^k)$ such that $U-\tilde{U}\in H^1_0(B_r(x),\R^k)$ and $\|\tilde{U}\|_{L^\infty}\leq C_1$.
\end{oss}

\begin{oss}
We point out that we will only use the assumption \eqref{e:main2a} to prove that $U$ is bounded and to get an almost-monotonicity formula (see Proposition \ref{p:monotony} and Corollary \ref{c:monotony}).

In \cite[Theorem 6.1]{david-toro-15}, David and Toro proved an almost-monotonicity formula for  quasi-minimizers in the case of the Laplacian. It is natural to expect that the same holds for an operator with variable coefficients, but we will not address this question in the present paper since we are mainly interested in the Lipschitz continuity of the eigenfunctions on optimal shapes for the problem \eqref{e:shapeopt} for which the equation \eqref{e:main2a} is already known.

However, soon before the present paper was published online, a new preprint of the same authors, in collaboration with Engelstein and Smit Vega Garcia (see \cite{david-engelstein-garcia-toro-19}), appeared on Arxiv. They prove a regularity result for functions satisfying a suitable quasi-minimality condition for operators with variable coefficients. 
We stress that the present paper and the work in \cite{david-engelstein-garcia-toro-19} were done in a completely independent way.
We notice that our main result neither directly implies nor is directly implied by the main result from \cite{david-engelstein-garcia-toro-19}.

\end{oss}

\noindent{\bf Notations.} 
Let us start by setting the assumptions on the coefficients of the operator that we will use throughout this paper.
The matrix-valued function $A=(a_{ij})_{ij} : D\rightarrow \text{Sym}_d^+$ has H\"{o}lder continuous coefficients and is uniformly elliptic, where $\text{Sym}_d^+$ denotes the family of all real positive symmetric $d \times d$ matrices. Precisely, there exist positive constants $\delta_{\text{\tiny\sc A}},c_{\text{\tiny\sc A}}>0$ and $\lambda_{\text{\tiny\sc A}}\geq 1$ such that
\begin{equation}\label{e:holderA}
|a_{ij}(x)-a_{ij}(y)|\le c_{\text{\tiny\sc A}}|x-y|^{\delta_{\text{\tiny\sc A}}},\quad\text{for every}\quad i,j\quad\text{and}\quad  x,y\in D\,;
\end{equation}
\begin{equation}\label{e:ellipA}
\frac{1}{\lambda_{\text{\tiny\sc A}}^2}|\xi|^2\le \xi\cdot A_x\,\xi=\sum_{i,j=1}^da_{ij}(x)\xi_i\xi_j\le \lambda_{\text{\tiny\sc A}}^2|\xi|^2,\quad\text{for every}\quad x\in D \quad\text{and}\quad \xi\in\R^d.
\end{equation}    
The function $b \in L^\infty(D)$ is positive and bounded away from zero: there exists $c_b>0$ such that
\begin{equation}\label{e:hypfctb}
c_b^{-1} \leq b(x) \leq c_b \quad \text{for almost every} \quad x\in D.
\end{equation}

We now fix some notations and conventions. For $x\in\R^d$ and $r>0$ we use the notation $B_r(x)$ to denote the ball centred at $x$ of radius $r$ and we simply write $B_r$ if $x=0$. We denote by $|\O|$ the Lebesgue measure of a generic set $\O\subset \R^d$ and by $\omega_d$ the Lebesgue measure of the unit ball $B_1\subset \R^d$. The $(d-1)$-dimensional Hausdorff measure is denoted by $\mathcal{H}^{d-1}$. Moreover, we define the positive and the negative parts of a function $u:\R\rightarrow\R$ by
\begin{equation*}
u^+=\max(u,0)\qquad\text{and}\qquad u^-=\max(-u,0).
\end{equation*}

For a quasi-open set $\O\in\R^d$ we denote by $H^1_0(\O)$ the Sobolev space defined as the set of functions $u\in H^1(\R^d)$ which, up to a set of capacity zero, vanishe outside $\O$; that is
\begin{equation*}
H^1_0(\O)=\{ u\in H^1(\R^d)\ :\ u=0\text{ quasi-everywhere in } \R^d\setminus\O \}.
\end{equation*}
(see e.g. \cite{henrot-pierre-05} for a definition of the capacity). Notice that if $\O$ is an open set, then $H^1_0(\O)$ is the usual Sobolev space defined as the closure of the smooth real-valued functions with support compact $C_c^\infty(\O)$ with respect to the norm $\|u\|_{H^1}=\|u\|_{L^2}+\|\nabla u\|_{L^2}$. We denote by $H^1_0(\O,\R^k)$ the space of vector-valued functions $U=(u_1,\dots,u_k):\O\to\R^k$ such that $u_i\in H^1_0(\O)$ for every $i=1,\dots,k$, and endowed with the norm
\begin{equation*}
\|U\|_{H^1(\O)}=\|U\|_{L^2(\O)}+\|\nabla U\|_{L^2(\O)}= \sum_{i=1}^k \big(\|u_i\|_{L^2(\O)} +\|\nabla u_i\|_{L^2(\O)} \big).
\end{equation*}
We also define the following norms (whenever it makes sense)
\begin{equation*}
\|U\|_{L^1(\O)}=\sum_{i=1}^k\|u_i\|_{L^1(\O)}\qquad\text{and}\qquad \|U\|_{L^\infty(\O)}=\sup_{1\leq i\leq k}\|u_i\|_{L^\infty(\O)}.
\end{equation*}
Moreover, for $U=(u_1,\dots,u_k):\O\to\R^k$ we set $|U|=u_1^2+\cdots+u_k^2$, $|\nabla U|^2=|\nabla u_1|^2+\cdots+|\nabla u_k|^2$ and $A\nabla U\cdot\nabla U=A\nabla u_1\cdot\nabla u_1+\cdots+A\nabla u_k\cdot\nabla u_k$. 
For $f=(f_1,\dots,f_k)\in L^2(\O,\R^k)$ we say that $U=(u_1,\dots,u_k)\in H^1_0(\O,\R^k)$ is solution to the equation 
\begin{equation*}
-\dive(A\nabla U)=f\quad\text{in}\quad\O,\qquad U\in H^1_0(\O,\R^k)
\end{equation*}
if, for every $i=1,\dots,k$, the component $u_i$ is solution to the equation
\begin{equation*}
-\dive(A\nabla u_i)=f_i\quad\text{in}\quad\O,\qquad u_i\in H^1_0(\O),
\end{equation*}
where the PDE is intended is the weak sense, that is
\begin{equation*}
\int_{\O}A\nabla u_i\cdot\nabla\varphi = \int_{\O}f_i\varphi\qquad\text{for every}\qquad\varphi\in H^1_0(\O).
\end{equation*}
Moreover, we always extend functions of the spaces $H^1_0(\O)$ and $H^1_0(\O,\R^k)$ by zero outside $\O$ so that we have the inclusions $H^1_0(\O)\subset H^1(\R^d)$ and $H^1_0(\O,\R^k)\subset H^1(\R^d,\R^k)$.

\section{Lipschitz continuity of quasi-minimizers}

This section is dedicated to the proof of Theorem \ref{t:main2}. Our approach is to locally freeze the coefficients to reduce to the case where $A=Id$.
More precisely, for every point $x\in D$, an almost-minimizer of the functional with variable coefficients becomes, in a new set of coordinates near $x$, an almost minimizer for a functional with constant coefficients. We stress out the dealing with the dependence of this change of variables with respect to the point $x$ is not a trivial task. We then adapt the strategy developed by David and Toro in \cite{david-toro-15} for almost-minimizers of a functional involving the Dirichlet energy. 

In this section, $u$ will stand for a coordinate function of the vector $U$ from Theorem \ref{t:main2}.
In subsection \ref{sub:contholdcont} we explicit the change of variables for which $u$ becomes a quasi-minimizer of the Dirichlet energy (in small balls of fixed center). We then prove that $u$ is continuous and we give an estimate of the modulus of continuity from which we deduce that $u$ is locally H\"{o}lder continuous in $D$. 

Subsection \ref{sub:boundlipcstustrpos} is addressed to the Lipschitz continuity of $u$ in some region where the function $u$ has a given sign. We show, using in particular the H\"{o}lder continuity of $u$, that most of the estimates proved in 
Subsection \ref{sub:contholdcont} can be improved provided that $u$ keeps the same sign. In this case, we prove that $u$ is Lipschitz continuous and we provide a bound on the Lipschitz constant of $u$. We also show that $u$ is $C^{1,\beta}$-regular for some $\beta\in(0,1)$. % (see Proposition \ref{p:strposlip}). 
Next, we show that under some assumption (see the first inequality in \eqref{e:lipcase1a}from Proposition \ref{p:lipcase1}), if the Dirichlet energy of $u$ in a small ball is big enough, then $u$ keeps the same sign in a smaller ball, which in view of the preceding analysis implies that $u$ is Lipschitz continuous.

In subsection \ref{sub:endlipcont} we complete the proof of the Lipschitz continuity of $u$. The main missing step is to deal with the case where the Dirichlet energy is big and the first assumption of \eqref{e:lipcase1a} in Proposition \ref{p:lipcase1} fails. Using an almost-monotonicity formula for operators with variable coefficients proved by Matevosyan and Petrosyan in \cite[Theorem III]{matevosyan-petrosyan-11}, we show that in this case the value of the Dirichlet energy has to decrease at some smaller scale. % (see also Corollary \ref{c:monotony}).
\medskip

Throughout this section we fix $u:=u_i$, for some $i=1,\dots,k$, a coordinate function of the vector $U=(u_1,\dots,u_k)$ from Theorem \ref{t:main2}. We start by proving that $u$ is a bounded function in $D$.

\begin{lm}[Boundedness]\label{l:degiorgi}
Let $\O\subset D$ be a (non-empty) quasi-open set, $f\in L^p(D)$ for some $p\in (d/2,+\infty]$ and let $u\in H^1_0(\Omega)$ be the solution of 
\begin{equation*}
-\dive(A \nabla u)=f\quad\text{in}\quad\Omega,\qquad u\in H^1_0(\Omega). 
\end{equation*}
Then, there is a dimensional constant $C_d$ such that
$$\|u\|_{L^\infty}\le \frac{\lambda_{\text{\tiny\sc A}}^2 C_d }{\sfrac2d-\sfrac1p}|\Omega|^{\sfrac2d-\sfrac1p}\|f\|_{L^p}.$$
\end{lm}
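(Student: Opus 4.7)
The plan is to apply the classical De Giorgi--Stampacchia truncation method. Since the matrix $A$ enters the argument only through the ellipticity \eqref{e:ellipA}, the proof is essentially the one for $-\Delta u = f$, with an additional factor $\lambda_{\text{\tiny\sc A}}^2$ passing through. I would bound $\|u^+\|_{L^\infty}$; the bound on $\|u^-\|_{L^\infty}$ follows by applying the same argument to $-u$.

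For every $t > 0$, the truncation $v_t := (u - t)^+$ lies in $H^1_0(\O)$ and is thus admissible as a test function in the weak formulation. Using $\nabla v_t = \ind_{\{u > t\}} \nabla u$ together with the ellipticity \eqref{e:ellipA}, this gives
\begin{equation*}
\lambda_{\text{\tiny\sc A}}^{-2} \int_D |\nabla v_t|^2 \leq \int_D A \nabla v_t \cdot \nabla v_t = \int_D f\, v_t.
\end{equation*}
Setting $a(t) := |\{u > t\}|$, I would combine the Sobolev inequality $\|v_t\|_{L^{2^\ast}}^2 \leq S_d \|\nabla v_t\|_{L^2}^2$ (for $d \geq 3$ with $2^\ast := \sfrac{2d}{d-2}$; dimensions $d \leq 2$ require only the obvious modification using a finite super-critical exponent) with H\"older's inequality applied to $\int f v_t$, using that $v_t$ is supported in $\{u > t\}$, to arrive at
\begin{equation*}
\|v_t\|_{L^{2^\ast}} \leq C_d \, \lambda_{\text{\tiny\sc A}}^2 \, \|f\|_{L^p} \, a(t)^{\sfrac{1}{p'} - \sfrac{1}{2^\ast}}.
\end{equation*}

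For $h > t > 0$ the elementary inequality $(h-t)\ind_{\{u > h\}} \leq v_t$ implies $(h - t)\, a(h)^{\sfrac{1}{2^\ast}} \leq \|v_t\|_{L^{2^\ast}}$, and combining with the previous display yields the recursive bound
\begin{equation*}
a(h) \leq \left( \frac{C_d \, \lambda_{\text{\tiny\sc A}}^2\, \|f\|_{L^p}}{h - t} \right)^{\!2^\ast} \! a(t)^{1 + \delta}, \qquad \delta := 2^\ast\bigl(\tfrac{1}{p'} - \tfrac{1}{2^\ast}\bigr) - 1,
\end{equation*}
the positivity of $\delta$ being equivalent to $p > d/2$. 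Stampacchia's iteration lemma then produces a level $t_\ast$ at which $a(t_\ast) = 0$, i.e.\ an $L^\infty$ bound on $u^+$. Tracking the exponents of $a(0) \leq |\O|$, $\|f\|_{L^p}$ and $\lambda_{\text{\tiny\sc A}}^2$, and using that $\sfrac{\delta}{2^\ast} = \sfrac{2}{d} - \sfrac{1}{p}$, one recovers the announced bound; the factor $(\sfrac{2}{d} - \sfrac{1}{p})^{-1}$ is produced by the iteration constant of Stampacchia's lemma.

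I do not expect any substantive difficulty, since the scheme is completely classical. The only minor points to attend to are the adaptation in dimensions $d \leq 2$ (where $2^\ast$ must be replaced by some finite super-critical exponent) and the careful book-keeping that isolates the constant $(\sfrac{2}{d} - \sfrac{1}{p})^{-1}$ appearing in the final estimate.
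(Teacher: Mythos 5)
Your truncation scheme (testing with $v_t=(u-t)^+$, ellipticity, Sobolev and H\"older, Chebyshev, recursion for $a(t)$) is correct step by step up to and including the recursive inequality, and it does prove $u\in L^\infty$ with a bound of the form $\lambda_{\text{\tiny\sc A}}^2\,C_{d,p}\,|\O|^{\sfrac{2}{d}-\sfrac{1}{p}}\|f\|_{L^p}$, which is all the paper actually needs downstream. The gap is in your last claim: Stampacchia's iteration lemma does \emph{not} produce the factor $(\sfrac{2}{d}-\sfrac{1}{p})^{-1}$. Applied to $a(h)\le\big((h-t)^{-1}C_d\lambda_{\text{\tiny\sc A}}^2\|f\|_{L^p}\big)^{2^\ast}a(t)^{1+\delta}$, it gives $a(t_\ast)=0$ at the level $t_\ast=C_d\lambda_{\text{\tiny\sc A}}^2\|f\|_{L^p}\,a(0)^{\delta/2^\ast}\,2^{(1+\delta)/\delta}$; the iteration constant is $2^{(1+\delta)/\delta}$, and since $\delta=2^\ast(\sfrac{2}{d}-\sfrac{1}{p})$ this grows \emph{exponentially} in $(\sfrac{2}{d}-\sfrac{1}{p})^{-1}$ as $p\downarrow d/2$, whereas the lemma asserts a constant that is only \emph{linear} in $(\sfrac{2}{d}-\sfrac{1}{p})^{-1}$ with a purely dimensional $C_d$. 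So tracking exponents recovers the correct powers of $|\O|$, $\|f\|_{L^p}$ and $\lambda_{\text{\tiny\sc A}}^2$, but not the stated constant: as written, your proof establishes a strictly weaker estimate near $p=d/2$.

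The paper's route avoids the dyadic iteration and yields the constant directly. It reduces to $f\ge0$ (so $u\ge0$ by the maximum principle), uses that $u$ minimizes $J(\varphi)=\frac12\int_\O A\nabla\varphi\cdot\nabla\varphi-\int_\O f\varphi$ and compares with $u\wedge t+(u-t-\eps)_+$, which gives $\frac{1}{2\lambda_{\text{\tiny\sc A}}^2}\int_{\{t<u\le t+\eps\}}|\nabla u|^2\le\eps\,\|f\|_{L^p}\,|\{u>t\}|^{1-\sfrac{1}{p}}$; the quoted Lemma 5.3 of the Russ--Trey--Velichkov paper then combines Cauchy--Schwarz, the coarea formula and the isoperimetric inequality to convert this (after dividing by $\eps$ and letting $\eps\to0$) into a differential inequality for the distribution function $\phi(t)=|\{u>t\}|$, of the form $-\frac{d}{dt}\,\phi(t)^{\sfrac{2}{d}-\sfrac{1}{p}}\ge c_d\,\lambda_{\text{\tiny\sc A}}^{-2}\,(\sfrac{2}{d}-\sfrac{1}{p})\,\|f\|_{L^p}^{-1}$, whose integration gives exactly $\|u\|_{L^\infty}\le \lambda_{\text{\tiny\sc A}}^2 C_d(\sfrac{2}{d}-\sfrac{1}{p})^{-1}|\O|^{\sfrac{2}{d}-\sfrac{1}{p}}\|f\|_{L^p}$. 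If you want the constant as stated, keep your PDE-testing set-up but test with $\min\{(u-t)^+,\eps\}$ instead of $(u-t)^+$ and run this continuous level-set argument in place of the discrete Stampacchia step; otherwise you must accept the exponentially worse $p$-dependence.
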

\begin{proof}
Up to arguing with the positive and the negative parts of $f$, we can assume that $f$ is a non-negative function. By the maximum principle (see \cite[Theorem 8.1]{gilbarg-trudinger-01}) we have $u\ge 0$ on $\Omega$. Moreover, $u$ is a minimum of the following functional
$$J(\varphi):=\frac12\int_\Omega A\nabla\varphi\cdot\nabla\varphi-\int_\Omega f\varphi, \qquad \varphi \in H^1_0(\O).$$ 
We consider, for every $0<t<\|u\|_{L^\infty}$ and $\eps>0$, the test function $u_{t,\eps}=u\wedge t +(u-t-\eps)_+\in H^1_0(\O)$. Then, by ellipticity of the matrices $A_x$ and the inequality $J(u)\le J(u_{t,\eps})$ we get that 
\begin{align*}
\ds\frac{1}{2\lambda_{\text{\tiny\sc A}}^2}\int_{\{t< u\le t+\eps\}}|\nabla u|^2 & \le \ds\frac12\int_{\{t< u\le t+\eps\}}A\nabla u\cdot\nabla u
\le \int_{\R^d} f\left(u-u_{t,\eps}\right) \\
&\le\eps \int_{\{u>t\}}f\le \eps \|f\|_{L^p}|\{u>t\}|^{\frac{p-1}p},
\end{align*}
The end of the proof now follows precisely as in \cite[Lemma 5.3]{russ-trey-velichkov-19}.
\end{proof}

\subsection{Continuity and H\"{o}lder continuity}\label{sub:contholdcont}
We change the coordinates and reduce to the case $A=\text{Id}$ using in particular the H\"{o}lder continuity of the coefficients of $A$, and we then prove that $u$ is locally H\"{o}lder continuous in $D$.
Let us first introduce few notations that we will use throughout this section. For $x\in D$ we define the function $F_x : \R^d\rightarrow \R^d$ by
\begin{equation*}
F_x(\xi):=x+A_x^{\sfrac12}[\xi],\qquad \xi\in\R^d.
\end{equation*}
Moreover, we set $u_x=u\circ F_x$ for every $x\in D$.

\begin{oss}
For $M\in Sym_d^+$ we denote by $M^{\sfrac12}$ the square root matrix of $M$. We recall that if $M\in Sym_d^+$, then there is an orthogonal matrix $P$ such that $PMP^t=\text{diag}(\lambda_1,\dots,\lambda_d)$, where $P^t$ is the transpose of $P$ and $\text{diag}(\lambda_1,\dots,\lambda_d)$ is the diagonal matrix with eigenvalues $\lambda_1,\dots,\lambda_d$. The matrix $M^{\sfrac12}$ is then defined by $M^{\sfrac12}:=P^tDP$ where $D=\text{diag}(\sqrt\lambda_1,\dots,\sqrt\lambda_d)$.
\end{oss}

\begin{oss}[Notation of the harmonic extension]
One of the main ingredient in the proof of Theorem \ref{t:main2} is based on small variations of the function $u_x$. Precisely, we will often compare $u_x$ in some ball $B_r$ with the harmonic extension of the trace of $u_x$ to $\partial B_r$. This function will often be denoted by $h_{x,r}$, or more simply $h_r$ if there is no confusion, and is defined by $h_r=h_{x,r} \in H^1(B_r)$ and
\[ \Delta h_r=0\quad\text{in}\quad B_r,\qquad u_x-h_r\in H^1_0(B_r). \]
We notice that $h_r$ is a minimizer of the Dirichlet energy in the ball $B_r$, that is
\[ \int_{B_r}|\nabla h_r|^2 \leq \int_{B_r}|\nabla v|^2\qquad\text{for every } v\in H^1(B_r)\text{ such that } h_r-v \in H^1_0(B_r). \]
\end{oss}

We now prove that the function $u_x$ is in some sense a quasi-minimizer for the Dirichlet energy in small balls centred at the origin.

\begin{prop}\label{p:quminux}
There exist constants $r_0\in (0,1)$ and $C>0$ such that, if $x\in D$ and $r\leq r_0$ satisfy $B_{\lambda_{\text{\tiny\sc A}} r}(x) \subset D$, then we have
\begin{equation}\label{e:quminux0}
\int_{B_r}|\nabla u_x|^2 \leq (1+Cr^{\delta_{\text{\tiny\sc A}}})\int_{B_r}|\nabla \tilde{u}|^2 + Cr^d,
\end{equation}
for every $\tilde{u} \in H^1(\R^d)\cap L^\infty(\R^d)$ such that $u_x-\tilde{u} \in H^1_0(B_r)$ and $\|\tilde{u}\|_{L^\infty}\leq \|u\|_{L^\infty}$.
\end{prop}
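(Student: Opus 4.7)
The plan is to use the global quasi-minimality \eqref{e:main2b} of $U$ with a competitor obtained from $\tilde u$ by pulling back through $F_x$, and then to transfer the resulting energy inequality from the ellipsoid $E:=F_x(B_r)\subset B_{\lambda_{\text{\tiny\sc A}}r}(x)\subset D$ back to $B_r$ via the same change of variables. The H\"older assumption \eqref{e:holderA} will generate the multiplicative error $(1+Cr^{\delta_{\text{\tiny\sc A}}})$, while the additive remainder $Cr^d$ will absorb the measure term in \eqref{e:main2b}, the $L^1$-factor in the quasi-minimality applied to the bounded Dirichlet energies of the other components $u_j$, $j\neq i$, and the contribution of $|\{|U|>0\}|-|\{|\tilde V|>0\}|\le |E|$.

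Given $\tilde u$ as in the statement, I would set $\tilde v(y):=\tilde u(F_x^{-1}(y))$ for $y\in E$ and $\tilde v(y):=u(y)$ for $y\in D\setminus E$, and let $\tilde V\in H^1_0(D,\R^k)$ be the vector obtained from $U$ by replacing its $i$-th component $u=u_i$ by $\tilde v$. Since $\tilde u-u_x\in H^1_0(B_r)$ and $F_x$ is bi-Lipschitz, $\tilde v-u\in H^1_0(E)$, so the construction makes sense. The bound $\|\tilde u\|_{L^\infty}\le\|u\|_{L^\infty}$, together with the boundedness of $U$ given by Lemma \ref{l:degiorgi}, yields $\|\tilde V\|_{L^\infty}\le\|U\|_{L^\infty}=:C_1$; moreover, since $U$ and $\tilde V$ only differ on $E$ we have $\|U-\tilde V\|_{L^1}\le 2\|u\|_{L^\infty}|E|\le Cr^d\le\eps$ provided $r_0$ is small enough, so that \eqref{e:main2b} applies. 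Writing \eqref{e:main2b}, subtracting the unchanged components $u_j$, $j\ne i$, from both sides, and controlling $|\{|U|>0\}|-|\{|\tilde V|>0\}|\le|E|\le Cr^d$ together with the cross term $C\|U-\tilde V\|_{L^1}\cdot\sum_{j\ne i}\int_D A\nabla u_j\cdot\nabla u_j\le Cr^d$, I would arrive at
\begin{equation*}
\int_{E}A\nabla u\cdot\nabla u\;\le\;(1+Cr^d)\int_{E}A\nabla\tilde v\cdot\nabla\tilde v+Cr^d.
\end{equation*}

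It remains to transfer this inequality to $B_r$ and to $\tilde u$. Changing variables $y=F_x(\xi)$ one computes $\nabla u(y)=A_x^{-\sfrac12}\nabla u_x(\xi)$, whence $A(y)\nabla u\cdot\nabla u=\nabla u_x\cdot(A_x^{-\sfrac12}A(y)A_x^{-\sfrac12})\nabla u_x$, and similarly for $\tilde v$ and $\tilde u$. Thanks to \eqref{e:holderA} and \eqref{e:ellipA}, for $y\in E$ one has $|y-x|\le\lambda_{\text{\tiny\sc A}}r$ and therefore $\|A_x^{-\sfrac12}A(y)A_x^{-\sfrac12}-\mathrm{Id}\|\le Cr^{\delta_{\text{\tiny\sc A}}}$. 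Combined with $dy=(\det A_x)^{\sfrac12}d\xi$, this yields
\begin{equation*}
(\det A_x)^{\sfrac12}(1-Cr^{\delta_{\text{\tiny\sc A}}})\int_{B_r}|\nabla u_x|^2\le\int_{E}A\nabla u\cdot\nabla u,\quad\int_{E}A\nabla\tilde v\cdot\nabla\tilde v\le(\det A_x)^{\sfrac12}(1+Cr^{\delta_{\text{\tiny\sc A}}})\int_{B_r}|\nabla\tilde u|^2.
\end{equation*}
Chaining these with the previous display and dividing by $(\det A_x)^{\sfrac12}(1-Cr^{\delta_{\text{\tiny\sc A}}})$, which is bounded away from zero by \eqref{e:ellipA}, yields \eqref{e:quminux0} after absorbing $r^d\le r^{\delta_{\text{\tiny\sc A}}}$ into one constant $C$.

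The main obstacle I foresee is the careful bookkeeping rather than any deep idea: one has to verify that $\tilde V$ is an admissible competitor in the \emph{global} condition \eqref{e:main2b} with constants $\eps$ and $C$ depending only on $C_1=\|U\|_{L^\infty}$ (and in particular independent of $x$ and $r$), and one has to keep the two error scales $r^{\delta_{\text{\tiny\sc A}}}$ (from freezing $A$) and $r^d$ (from the quasi-minimality and the measure term) separate long enough to see that their combination fits in the target form $(1+Cr^{\delta_{\text{\tiny\sc A}}})\int|\nabla\tilde u|^2+Cr^d$ with a single, $x$-independent constant.
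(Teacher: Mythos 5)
Your proposal is correct and follows essentially the same route as the paper: build the competitor by replacing the $i$-th component of $U$ on $F_x(B_r)$ with $\tilde u\circ F_x^{-1}$, apply the quasi-minimality \eqref{e:main2b}, and transfer the energies between $B_r$ and $F_x(B_r)$ using the H\"older continuity and ellipticity of $A$, which produces exactly the error structure $(1+Cr^{\delta_{\text{\tiny\sc A}}})$ plus $Cr^d$. The only difference is that you make explicit the admissibility checks ($\|\tilde V\|_{L^\infty}\le\|U\|_{L^\infty}$ and $\|U-\tilde V\|_{L^1}\le Cr^d\le\eps$) that the paper leaves implicit, which is a welcome addition rather than a deviation.
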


\begin{proof}
Let $v\in H^1_0(D)$ be such that $\tilde{u}=v\circ F_x$ and set $\tilde{U}=(u_1,\dots,v,\dots,u_k)\in H^1_0(D,\R^k)$, where $v$ stands at the $i$-th position. Set $\rho=\lambda_{\text{\tiny\sc A}} r$ and note that $F_x(B_r)\subset B_\rho(x)\subset D$. Then, using $\tilde{U}$ as a test function and observing that $u-v \in H^1_0(F_x(B_r))$, we get
\[ \int_{F_x(B_r)}A\nabla u\cdot\nabla u \leq \int_{F_x(B_r)}A\nabla v\cdot\nabla v + C\|u-v\|_{L^1} \int_DA\nabla\tilde{U}\cdot\nabla\tilde{U} + \Lambda|B_{\rho}|, \]
where $C$ is the constant from Theorem \ref{t:main2}. Together with
\begin{align*}
\int_DA\nabla\tilde{U}\cdot\nabla\tilde{U} &\leq \int_DA\nabla U\cdot\nabla U - \int_DA\nabla u\cdot\nabla u + \int_DA\nabla v\cdot\nabla v \\
&\leq \int_DA\nabla U\cdot\nabla U + \int_{F_x(B_r)}A\nabla v\cdot\nabla v
\end{align*}
this yields
\begin{equation}\label{e:quminux1}
\int_{F_x(B_r)}A\nabla u\cdot\nabla u \leq (1+\tilde{C}r^d)\int_{F_x(B_r)}A\nabla v\cdot\nabla v + \tilde{C}r^d,
\end{equation}
for some constant $\tilde{C}$. On the other hand, using the H\"{o}lder continuity and the ellipticity of $A$ we estimate
\begin{align}\label{e:quminux2}
\nonumber\int_{B_r}|\nabla u_x|^2 &= \det(A_x^{-\sfrac12})\int_{F_x(B_r)}A_x\nabla u\cdot\nabla u \\
&\leq \det(A_x^{-\sfrac12})(1+dc_{\text{\tiny\sc A}}\lambda_{\text{\tiny\sc A}}^2\rho^{\delta_{\text{\tiny\sc A}}})\int_{F_x(B_r)}A\nabla u\cdot\nabla u.
\end{align}
Similarly, we have the following estimate from below
\begin{equation}\label{e:quminux3}
\int_{B_r}|\nabla\tilde{u}|^2 \geq \det(A_x^{-\sfrac12})(1-dc_{\text{\tiny\sc A}}\lambda_{\text{\tiny\sc A}}^2\rho^{\delta_{\text{\tiny\sc A}}})\int_{F_x(B_r)}A\nabla v\cdot\nabla v.
\end{equation}
Now, combining \eqref{e:quminux2}, \eqref{e:quminux1} and \eqref{e:quminux3} we get
\[ \int_{B_r}|\nabla u_x|^2  \leq (1+dc_{\text{\tiny\sc A}}\lambda_{\text{\tiny\sc A}}^2\rho^{\delta_{\text{\tiny\sc A}}})\bigg[ \frac{1+\tilde{C}r^d}{1-dc_{\text{\tiny\sc A}}\lambda_{\text{\tiny\sc A}}^2\rho^{\delta_{\text{\tiny\sc A}}}}\int_{B_r}|\nabla\tilde{u}|^2 + \lambda_{\text{\tiny\sc A}}^d\tilde{C}r^d \bigg]. \]
which gives \eqref{e:quminux1}.
\end{proof}

We now prove that the function $u$ is continuous in $D$. In the sequel we will often use the following notation: for $x\in D$ and $r>0$ we set
\begin{equation*}\label{e:defomega}
\omega(u,x,r) = \left(\aver{B_r(x)}|\nabla u|^2\right)^{1/2}\qquad\text{ and }\qquad\omega(u_x,r) = \left(\aver{B_r}|\nabla u_x|^2\right)^{1/2}.
\end{equation*}
%provided that $B_r(x)\subset D$ for $\omega(u,x,r)$ and $F_x(B_r)\subset D$ for $\omega(u_x,r)$.

\begin{prop}\label{p:contu}
The function $u$ is continuous in $D$. Moreover, there exist $r_0>0$ and $C>0$ such that, if  $x\in D$ and $r \leq r_0$ satisfy $B_{r}(x)\subset D$, then we have 
\begin{equation}\label{e:contu}
|u(y)-u(z)|\leq C\Big(1+\omega(u,x,r)+\log\frac{r}{|y-z|}\Big)|y-z|\qquad\text{for every}\qquad y,z\in B_{r/2}(x).
\end{equation}
\end{prop}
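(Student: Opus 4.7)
The plan is to adapt the strategy of David--Toro \cite{david-toro-15} for almost-minimizers of the Dirichlet energy to the variable-coefficient setting via the change of variables $F_x$ from Proposition \ref{p:quminux}: this allows one to compare $u_x$ to its harmonic extensions on small balls, up to an error of order $r^{\delta_{\text{\tiny\sc A}}}$.

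\emph{Step 1 (Comparison with the harmonic extension).} Fix $x\in D$ and $r>0$ small enough that Proposition \ref{p:quminux} applies. Let $h_{x,r}$ be the harmonic extension of $u_x$ on $B_r$. By the maximum principle $\|h_{x,r}\|_{L^\infty(B_r)}\le\|u_x\|_{L^\infty(\partial B_r)}\le\|u\|_{L^\infty}$, so the quasi-minimality \eqref{e:quminux0} applies to the test function $\tilde u=h_{x,r}$ (extended by $u_x$ outside $B_r$). Combined with the orthogonality $\int_{B_r}\nabla h_{x,r}\cdot\nabla(u_x-h_{x,r})=0$ (valid since $\Delta h_{x,r}=0$ and $u_x-h_{x,r}\in H^1_0(B_r)$), this yields
\[ \int_{B_r}\bigl|\nabla(u_x-h_{x,r})\bigr|^2\le C r^{\delta_{\text{\tiny\sc A}}}\int_{B_r}|\nabla u_x|^2+Cr^d. \]
Repeating the freezing argument of Proposition \ref{p:quminux} centered at $F_x(y_0)$ for $y_0\in B_{r/2}$ provides the analogous estimate on every $B_\rho(y_0)\subset B_r$ with $\rho\le r/2$, with $\rho^{\delta_{\text{\tiny\sc A}}}$ replacing $r^{\delta_{\text{\tiny\sc A}}}$.

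\emph{Step 2 (Dyadic iteration).} Apply Step 1 on dyadic scales $r_n:=2^{-n}r$ centered at arbitrary $y_0\in B_{r/2}$. Using the interior gradient estimate for harmonic functions, the Poincar\'e inequality applied to $u_x-h_{y_0,r_n}\in H^1_0(B_{r_n}(y_0))$, and the mean-value property of $h_{y_0,r_n}$, one controls the dyadic increment
\[ \Bigl|\aver{B_{r_{n+1}}(y_0)}u_x-\aver{B_{r_n}(y_0)}u_x\Bigr|\le C r_n\Bigl(1+\bigl(\aver{B_{r_n}(y_0)}|\nabla u_x|^2\bigr)^{1/2}\Bigr), \]
together with a slow-growth bound $\bigl(\aver{B_{r_n}(y_0)}|\nabla u_x|^2\bigr)^{1/2}\le C(1+\omega(u,x,r)+n)$ for the dyadic energies themselves.

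\emph{Step 3 (Telescoping and conclusion).} Given $y,z\in B_{r/2}(x)$ with $\rho:=|y-z|$, pass to the $\xi$-variables via $F_x^{-1}$ (which preserves the claimed estimate up to the bilipschitz constant of $F_x$), set $N\simeq\log_2(r/\rho)$, and let $w$ be the midpoint of $F_x^{-1}(y)$ and $F_x^{-1}(z)$. Telescoping $\aver{B_{r_n}(w)}u_x$ from $n=0$ to $n=N$ via Step 2, and using the Lipschitz bound for $h_{w,r_N}$ on $B_{r_N/2}(w)\supset\{F_x^{-1}(y),F_x^{-1}(z)\}$, gives the claimed modulus of continuity; continuity of $u$ in $D$ follows at once. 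The main obstacle is the slow-growth bound on the dyadic averages of $|\nabla u_x|^2$ in Step 2: a naive comparison with the harmonic extension on the subball via $|\nabla u_x|^2\le 2|\nabla h|^2+2|\nabla(u_x-h)|^2$ picks up a dimension-dependent multiplicative factor at each dyadic step, yielding only a H\"older-type modulus. The linear-in-$n$ growth needed here requires carefully exploiting the full-ball orthogonality of $\nabla h_{y_0,r_n}$ with $\nabla(u_x-h_{y_0,r_n})$ together with the subharmonicity of $|\nabla h_{y_0,r_n}|^2$, in the spirit of the bookkeeping of \cite{david-toro-15}.
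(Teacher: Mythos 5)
Your Steps 1--2 correctly identify the mechanism that makes the paper's argument work: after freezing coefficients, compare with the harmonic extension, and exploit the subharmonicity of $|\nabla h|^2$ so the harmonic part does not grow under dyadic descent while the error term, carrying the multiplicative factor $(t/s)^{d/2}$, is tamed by a decaying weight. This is exactly what Lemma \ref{l:estcontu} does via \eqref{e:estcontu2}, \eqref{e:estcontu3}, \eqref{e:estcontu4}. You are also right that a naive $|\nabla u_x|^2\le 2|\nabla h|^2+2|\nabla(u_x-h)|^2$ without subharmonicity would give only a H\"older modulus. So the overall strategy is the right one.

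However there is a genuine gap in Steps 2--3. The quasi-minimality estimate \eqref{e:quminux0} --- with error proportional to $\rho^{\delta_{\text{\tiny\sc A}}}$ --- only holds for balls $B_\rho$ \emph{centered at the origin} of the frozen coordinates $u_x$: the error comes from $|A_\xi-A_x|\lesssim\rho^{\delta_{\text{\tiny\sc A}}}$ on $F_x(B_\rho)$, which is a set of diameter $\sim\rho$ around $x$. In Step 2 and especially in Step 3 you telescope the averages of $u_x$ on \emph{off-center} balls $B_{r_n}(y_0)$ (resp. $B_{r_n}(w)$ with $w$ the midpoint of $F_x^{-1}(y),F_x^{-1}(z)$). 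On $F_x(B_{r_n}(y_0))$ the oscillation of $A$ around the frozen value $A_x$ is $\sim(|y_0|+r_n)^{\delta_{\text{\tiny\sc A}}}$, which is $\sim r^{\delta_{\text{\tiny\sc A}}}$ and does \emph{not} decay with $n$. Then the dyadic factors $1+C\,2^{d/2}r^{\delta_{\text{\tiny\sc A}}/2}$ are bounded away from $1$ uniformly in $n$, the product over $n\lesssim\log(r/|y-z|)$ dyadic steps is of order $(r/|y-z|)^{\eps}$ for some $\eps>0$, and the claimed linear-in-$n$ growth (hence the $\log$ in \eqref{e:contu}) fails; you land back at a H\"older-type modulus. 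You do mention in Step 1 that one can re-freeze at $F_x(y_0)$, but then the function of interest is $u_{F_x(y_0)}$ and the balls must be centered at the origin of \emph{those} coordinates; Steps 2--3 as written keep $u_x$ throughout, which is inconsistent.

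The paper avoids this by never using off-center balls in a fixed chart: Lemma \ref{l:estcontu} establishes both the slow-growth bound \eqref{e:estcontua} and the average-to-point estimate \eqref{e:estcontub} for origin-centered balls in the coordinates $u_x$; then, in the proof of the proposition, it applies those two estimates twice, once in the chart $u_y$ and once in the chart $u_z$, chooses the radii so that $F_z(B_{\lambda_{\text{\tiny\sc A}}^{-1}\delta})\subset F_y(B_{2\lambda_{\text{\tiny\sc A}}\delta})$, and links the two averages with Poincar\'e and ellipticity (equations \eqref{e:contu1}--\eqref{e:contu6}). If you replace the single midpoint-telescope by two origin-centered telescopes in $u_y$ and $u_z$ and add the nesting/Poincar\'e bridge, your argument becomes the paper's.
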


The next Lemma shows that $\omega(u_x,r)$ cannot grow too fast as $r$ tends to zero  and will be useful throughout the proof of the Lipschitz continuity of $u$.

\begin{lm}\label{l:estcontu}
There exist constants $r_0>0$ and $C>0$ such that, if $x\in D$ and $r\leq r_0$ satisfy $B_{\lambda_{\text{\tiny\sc A}}r}(x)\subset D$, then we have
\begin{equation}\label{e:estcontua}
\omega(u_x,s)\leq C\omega(u_x,r)+C\log\Big(\frac{r}{s}\Big)\qquad\text{for every}\qquad 0<s\leq r.
\end{equation}
If, moreover, $x$ is a Lebesgue point for $u$, then we have
\begin{equation}\label{e:estcontub}
\bigg|u(x)-\aver{B_r}u_x\bigg| \leq Cr(1+\omega(u_x,r)).
\end{equation}
\end{lm}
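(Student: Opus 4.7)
The plan is to prove \eqref{e:estcontua} by a dyadic iteration based on harmonic replacement, and then obtain \eqref{e:estcontub} from it via telescoping of averages. Fix $\rho \le r$ with $B_{\lambda_{\text{\tiny\sc A}}\rho}(x)\subset D$, and let $h_\rho \in H^1(B_\rho)$ be the harmonic extension of $u_x$, extended by $u_x$ outside $B_\rho$. By the maximum principle for harmonic functions and the boundedness of $u$ (Lemma \ref{l:degiorgi}), this extension is an admissible competitor in Proposition \ref{p:quminux}. Combining the $L^2$-orthogonality $\int_{B_\rho}\nabla h_\rho\cdot\nabla(u_x-h_\rho)=0$ with the quasi-minimality \eqref{e:quminux0} applied to $h_\rho$ yields the error bound
\begin{equation*}
\int_{B_\rho}|\nabla(u_x-h_\rho)|^2 \le C\rho^{\delta_{\text{\tiny\sc A}}}\int_{B_\rho}|\nabla u_x|^2 + C\rho^d.
\end{equation*}

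Next, for any $\eta>0$ Young's inequality gives $|\nabla u_x|^2 \le (1+\eta)|\nabla h_\rho|^2 + (1+\eta^{-1})|\nabla(u_x-h_\rho)|^2$. Integrating on $B_{\rho/2}$, using that $|\nabla h_\rho|^2$ is subharmonic in $B_\rho$ (hence $\aver{B_{\rho/2}}|\nabla h_\rho|^2 \le \aver{B_\rho}|\nabla h_\rho|^2 \le \omega(u_x,\rho)^2$), together with the error bound above, produces
\begin{equation*}
\omega(u_x,\rho/2)^2 \le (1+\eta)\,\omega(u_x,\rho)^2 + 2^d(1+\eta^{-1})\bigl(C\rho^{\delta_{\text{\tiny\sc A}}}\omega(u_x,\rho)^2+C\bigr).
\end{equation*}
Writing the right-hand side as $E+F+E\eta+F/\eta$ with $E=\omega(u_x,\rho)^2$ and $F=2^d(C\rho^{\delta_{\text{\tiny\sc A}}}\omega(u_x,\rho)^2+C)$, AM--GM gives the lower envelope $(\sqrt{E}+\sqrt{F})^2$ (achieved at $\eta=\sqrt{F/E}$). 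Taking square roots and bounding $\sqrt{F}\le C'\rho^{\delta_{\text{\tiny\sc A}}/2}\omega(u_x,\rho)+C'$ by subadditivity yields the clean one-step estimate
\begin{equation*}
\omega(u_x,\rho/2) \le (1+C\rho^{\delta_{\text{\tiny\sc A}}/2})\,\omega(u_x,\rho) + C.
\end{equation*}

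Iterating along the dyadic scales $\rho_k=r/2^k$, the product $\prod_k(1+C\rho_k^{\delta_{\text{\tiny\sc A}}/2})$ is uniformly bounded since $\sum_k\rho_k^{\delta_{\text{\tiny\sc A}}/2}$ is a convergent geometric series, so summing the additive contributions gives $\omega(u_x,r/2^K) \le C\omega(u_x,r)+CK$. For general $s\in(0,r]$, choose $K=\lfloor\log_2(r/s)\rfloor$ and compare $\omega(u_x,s)$ with $\omega(u_x,r/2^K)$ via $B_s\subset B_{r/2^K}$ and $|B_{r/2^K}|\le 2^d|B_s|$, which yields \eqref{e:estcontua}. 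For \eqref{e:estcontub}, I would telescope along dyadic averages: Poincaré--Wirtinger gives $|\aver{B_{\rho_{k+1}}}u_x-\aver{B_{\rho_k}}u_x| \le 2^d\aver{B_{\rho_k}}|u_x-\aver{B_{\rho_k}}u_x| \le C\rho_k\omega(u_x,\rho_k)$, and summing with the bound from part (a) produces a convergent geometric series dominated by $Cr(\omega(u_x,r)+1)$. Since $x$ is a Lebesgue point of $u$ and $\aver{B_{\rho_k}}u_x=\aver{F_x(B_{\rho_k})}u$, where the ellipsoids $F_x(B_{\rho_k})$ shrink to $x$ with bounded eccentricity, the averages converge to $u(x)$, which gives \eqref{e:estcontub} after passing to the limit.

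The main obstacle is in the iteration step: a naive use of $|a+b|^2\le 2|a|^2+2|b|^2$ yields a multiplicative factor $\sqrt{2}$ at each dyadic step and only produces the weaker Hölder-type bound $\omega(u_x,s)\lesssim (r/s)^{1/2}(\omega(u_x,r)+1)$. The crucial point is to exploit Young's inequality with the optimal $\eta$ balancing the main term and the Hölder error: this replaces $\sqrt{2}$ per step with a factor $1+O(\rho^{\delta_{\text{\tiny\sc A}}/2})$ whose infinite product is finite, thereby converting the polynomial blow-up in $(r/s)$ into the desired logarithmic growth.
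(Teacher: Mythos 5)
This is essentially the paper's argument: harmonic replacement, the quasi-minimality error bound for $\int_{B_t}|\nabla(u_x-h_t)|^2$, subharmonicity of $|\nabla h_t|^2$, the dyadic iteration producing $\omega(u_x,r_i)\le C\omega(u_x,r)+Ci$, and the Poincar\'e/telescoping step for the Lebesgue-point estimate all appear in the same order. The only cosmetic difference is the one-step estimate: the paper applies the $L^2$ triangle inequality $\omega(u_x,s)\le\big(\aver{B_s}|\nabla(u_x-h_t)|^2\big)^{1/2}+\big(\aver{B_s}|\nabla h_t|^2\big)^{1/2}$ directly, which gives exactly your optimized-$\eta$ bound $(1+C\rho^{\delta_{\text{\tiny\sc A}}/2})\omega(u_x,\rho)+C$ in one line, so the Young-with-parameter optimization you present as the ``crucial point'' is automatically built into the triangle inequality rather than being an additional insight.
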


\begin{proof}
Let $t\leq r$ and use $h_t$ as a test function in \eqref{e:quminux0}, where $h_t=h_{x,t}$ denotes the harmonic extension in $B_t$ of the trace of $u_x$ to $\partial B_t$, to get
\begin{align}\label{e:estcontu1}
\nonumber\int_{B_t}|\nabla(u_x-h_t)|^2 &= \int_{B_t}|\nabla u_x|^2 - \int_{B_t}|\nabla h_t|^2 \\
&\leq Ct^{\delta_{\text{\tiny\sc A}}}\int_{B_t}|\nabla h_t|^2 + Ct^d \leq Ct^{\delta_{\text{\tiny\sc A}}}\int_{B_t}|\nabla u_x|^2 + Ct^d,
\end{align}
where in the last inequality we have used that $h_t$ is a minimizer of the Dirichlet energy on $B_t$. Moreover, since $|\nabla h_t|$ is subharmonic on $B_s$ for every $s\leq t$, we have 
\begin{equation}\label{e:estcontu2}
\aver{B_s}|\nabla h_t|^2 \leq \aver{B_t}|\nabla h_t|^2\qquad\text{for every}\qquad s\leq t.
\end{equation}
Therefore, the triangle inequality, \eqref{e:estcontu2} and \eqref{e:estcontu1} give for every $s\leq t\leq r_0$
\begin{align}\label{e:estcontu3}
\nonumber\omega(u_x,s) &\leq \left(\aver{B_s}|\nabla(u_x-h_t)|^2\right)^{1/2} + \left(\aver{B_s}|\nabla h_t|^2\right)^{1/2} \\
\nonumber&\leq\Big(\frac{t}{s}\Big)^{d/2}\left(\aver{B_t}|\nabla(u_x-h_t)|^2\right)^{1/2} + \left(\aver{B_t}|\nabla h_t|^2\right)^{1/2} \\
&\leq \Big(\frac{t}{s}\Big)^{d/2}C\Big(t^{\delta_{\text{\tiny\sc A}}/2}\omega(u_x,t)+1\Big) +\omega(u_x,t)\\
\nonumber&\leq \Big(1+C\Big(\frac{t}{s}\Big)^{d/2}t^{\delta_{\text{\tiny\sc A}}/2}\Big)\omega(u_x,t) + C\Big(\frac{t}{s}\Big)^{d/2}.
\end{align}
We then use the estimate \eqref{e:estcontu3} with the radii $r_i=2^{-i}r$, $i\geq 0$, and we get
\[ \omega(u_x,r_i) \leq \Big(1+Cr_{i-1}^{\delta_{\text{\tiny\sc A}}/2}\Big)\omega(u_x,r_{i-1})+C, \qquad i\geq 1. \]
This, with an iteration, implies that for every $i\geq 1$ we have
\begin{align}\label{e:estcontu4}
\nonumber\omega(u_x,r_i) &\leq \omega(u_x,r)\prod_{j=0}^{i-1}\Big(1+Cr_j^{\delta_{\text{\tiny\sc A}}/2}\Big)+C\sum_{j=1}^{i-1}\prod_{l=j}^{i-1}\Big(1+Cr_l^{\delta_{\text{\tiny\sc A}}/2}\Big)+C\\
&\leq C\omega(u_x,r)+Ci,
\end{align}
where we used that the product $\prod_{j=0}^{\infty}\big(1+Cr_j^{\delta_{\text{\tiny\sc A}}/2}\big)$ is bounded by a constant depending on $r_0$. 
The first estimate of the Lemma now follows from \eqref{e:estcontu4}. Indeed, choose $i\geq 0$ such that $r_{i+1}<s\leq r_i$ and note that we have $\omega(u_x,s)\leq 2^{d/2}\omega(u_x,r_i)$. If $i=0$, this directly implies \eqref{e:estcontua}; otherwise, $i\geq 1$ and use also \eqref{e:estcontu4}. 

We now prove the second estimate. For $i\geq 0$ we set $m_i=\aver{B_{r_i}}u_x$.
By the Poincar\'{e} inequality and \eqref{e:estcontu4} we have
\begin{equation}\label{e:estcontu5}
\left(\aver{B_{r_i}}|u_x-m_i|^2\right)^{1/2} \leq Cr_i\omega(u_x,r_i)\leq Cr_i(\omega(u_x,r)+i).
\end{equation}
Furthermore, $0$ is a Lebesgue point for $u_x$ since $x$ is a Lebesgue point for $u$ and that for every $s\leq r$ we have
\[ \lambda_{\text{\tiny\sc A}}^{-2d}\aver{B_{\lambda_{\scalebox{.5}{A}}^{-1}s}(x)}|u-u(x)| \leq \aver{B_s}|u_x-u_x(0)|=\aver{F_x(B_s)}|u-u(x)| \leq \lambda_{\text{\tiny\sc A}}^{2d}\aver{B_{\lambda_{\scalebox{.5}{A}}s}(x)}|u-u(x)|. \]
In particular, it follows that $m_i$ converges to $u_x(0)=u(x)$ as $i\rightarrow+\infty$. Therefore, this with the Cauchy-Schwarz inequality and \eqref{e:estcontu5} give
\begin{align*}
|u(x)-m_i| &\leq \sum_{j=i}^{+\infty}|m_{j+1}-m_j| \leq \sum_{j=i}^{+\infty}\aver{B_{r_{j+1}}}|u_x-m_j| \\
&\leq 2^d \sum_{j=i}^{+\infty}\aver{B_{r_j}}|u_x-m_j| \leq 2^d \sum_{j=i}^{+\infty}\left(\aver{B_{r_j}}|u_x-m_j|^2\right)^{1/2} \\
&\leq C\sum_{j=i}^{+\infty}r_j(\omega(u_x,r)+j) \leq Cr_i(\omega(u_x,r)+i+1),
\end{align*}
where in the last inequality we used that $\sum_{j=i}^{+\infty}2^{i-j}j\leq C(i+1)$. Then, observe that \eqref{e:estcontub} is precisely the above inequality with $i=0$ to conclude the proof.
\end{proof}

\begin{proof}[Proof of Proposition \ref{p:contu}]
Let $y,z\in B_{r/2}(x)$ and notice that it is enough to prove \eqref{e:contu} when $y$ and $z$ are Lebesgue points for $u$. Set $\delta=|y-z|$. We first assume that $4\lambda_{\text{\tiny\sc A}}^2\delta\leq r$.
Observe that we hence have the inclusions $F_z(B_{{\lambda_{\text{\scalebox{.8}{A}}}^{-1}}\delta}) \subset F_y(B_{2\lambda_{\text{\tiny\sc A}}\delta})\subset B_{r}(x)\subset D$. Using a change of variables, the Poincar\'{e} inequality and then the ellipticity of $A$, we estimate
\begin{align}\label{e:contu1}
\nonumber\bigg|\aver{B_{2\lambda_{\scalebox{.5}{A}}}\delta}u_y - \aver{B_{\lambda_{\scalebox{.5}{A}}^{-1}\delta}}u_z \bigg| 
&= \bigg|\aver{F_y(B_{2\lambda_{\scalebox{.5}{A}}\delta})}u - \aver{F_z(B_{\lambda_{\scalebox{.5}{A}}^{-1}\delta})}u \bigg| 
\leq  \aver{F_z(B_{\lambda_{\scalebox{.5}{A}}^{-1}\delta})}\Big|u-\aver{F_y(B_{2\lambda_{\text{\tiny\sc A}}\delta})}u\Big|  \\
&\leq 2^d\lambda_{\text{\tiny\sc A}}^{4d}  \aver{F_y(B_{2\lambda_{\scalebox{.5}{A}}\delta})}\Big|u-\aver{F_y(B_{2\lambda_{\scalebox{.5}{A}}\delta})}u\Big|  \leq C\delta\left(\aver{F_y(B_{2\lambda_{\scalebox{.5}{A}}\delta})}|\nabla u|^2\right)^{1/2} \\&
\nonumber\leq C\delta\lambda_{\text{\tiny\sc A}} \left(\aver{F_y(B_{2\lambda_{\scalebox{.5}{A}}\delta})}A_y\nabla u\cdot\nabla u\right)^{1/2} \leq C\delta\omega(u_y,2\lambda_{\text{\tiny\sc A}}\delta).
\end{align}
On the other hand, since $F_z(B_{\lambda_{\scalebox{.5}{A}}^{-1}\delta})\subset F_y(B_{2\lambda_{\scalebox{.5}{A}}\delta})$ we have
\begin{align}\label{e:contu2}
\nonumber\omega(u_z,\lambda_{\text{\tiny\sc A}}^{-1}\delta) &= \left(\aver{F_z(B_{\lambda_{\scalebox{.5}{A}}^{-1}\delta})}A_z\nabla u\cdot\nabla u\right)^{1/2} 
\leq \lambda_{\text{\tiny\sc A}}\left(\aver{F_z(B_{\lambda_{\scalebox{.5}{A}}^{-1}\delta})}|\nabla u|^2\right)^{1/2} \\
&\leq 2^{d/2}\lambda_{\text{\tiny\sc A}}^{2d+1}\left(\aver{F_y(B_{2\lambda_{\scalebox{.5}{A}}\delta})}|\nabla u|^2\right)^{1/2} 
\leq 2^{d/2}\lambda_{\text{\tiny\sc A}}^{2d+2}\left(\aver{F_y(B_{2\lambda_{\scalebox{.5}{A}}\delta})}A_y\nabla u\cdot\nabla u\right)^{1/2} \\ 
\nonumber&\leq C\omega(u_y,2\lambda_{\text{\tiny\sc A}}\delta).
\end{align}
We now apply \eqref{e:estcontub} to get
\begin{equation}\label{e:contu3}
\bigg|u(y)-\aver{B_{2\lambda_{\scalebox{.5}{A}}\delta}}u_y\bigg| \leq C\delta(\omega(u_y,2\lambda_{\text{\tiny\sc A}}\delta)+1)
\end{equation}
and
\begin{equation}\label{e:contu4}
\bigg|u(z)-\aver{B_{\lambda_{\scalebox{.5}{A}}^{-1}\delta}}u_z\bigg| \leq C\delta(\omega(u_z,\lambda_{\text{\tiny\sc A}}^{-1}\delta)+1) \leq C\delta(\omega(u_y,2\lambda_{\text{\tiny\sc A}}\delta)+1),
\end{equation}
where we used \eqref{e:contu2} in the last inequality. Therefore, combining the triangle inequality, \eqref{e:contu3}, \eqref{e:contu1} and \eqref{e:contu4} we get that
\begin{equation}\label{e:contu8}
|u(y)-u(z)| \leq C\delta(\omega(u_y,2\lambda_{\text{\tiny\sc A}}\delta)+1).
\end{equation}
Moreover, by \eqref{e:estcontua} (recall that we assumed $4\lambda_{\text{\tiny\sc A}}^2\delta\leq r$) we have
\begin{equation}\label{e:contu5}
\omega(u_y,2\lambda_{\text{\tiny\sc A}}\delta) \leq C\omega(u_y,(2\lambda_{\text{\tiny\sc A}})^{-1}r) + C\log\frac{r}{4\lambda_{\text{\tiny\sc A}}^2\delta}.
\end{equation}
By the ellipticity of $A$ and since $F_y(B_{{\scriptscriptstyle(2\lambda_{\scalebox{.5}{A}})^{-1}}r})\subset B_{r}(x)$, we have the following estimate
\begin{align}\label{e:contu6}
\nonumber\omega(u_y,(2\lambda_{\text{\tiny\sc A}})^{-1}r) &=\left(\aver{F_y(B_{{\scriptscriptstyle(2\lambda_{\scalebox{.5}{A}})^{-1}}r})}A_y\nabla u\cdot\nabla u\right)^{1/2} \leq \lambda_{\text{\tiny\sc A}}\left(\aver{F_y(B_{(2\lambda_{\scalebox{.5}{A}})^{-1}r})}|\nabla u|^2\right)^{1/2} \\
&\leq 2^{d/2}\lambda_{\text{\tiny\sc A}}^{d+1}\left(\aver{B_{r}(x)}|\nabla u|^2\right)^{1/2} \leq C\omega(u,x,r).
\end{align} 
Finally, combine \eqref{e:contu8}, \eqref{e:contu5} and \eqref{e:contu6} to get
\begin{align}\label{e:contu7}
|u(y)-u(z)| &\leq C\delta\Big(1+\omega(u,x,r)+ \log\frac{r}{4\lambda_{\text{\tiny\sc A}}^2\delta}\Big) \\
\nonumber&\leq C|y-z|\Big(1+\omega(u,x,r)+\log\frac{r}{|y-z|}\Big),
\end{align}
which is \eqref{e:contu}.

Now, if the assumption $4\lambda_{\text{\tiny\sc A}}^2|y-z|\leq r$ is not satisfied, choose $n$ points $y_1=y,y_2,\dots,y_n=z$ in $B_{r}(x)$ such that $4\lambda_{\text{\tiny\sc A}}^2\eta=|y-z|$, where we have set $\eta=|y_i-y_{i+1}|$, $i=1,\dots,n$. Then we have $4\lambda_{\text{\tiny\sc A}}^2\eta\leq r$. We notice that we can assume the $y_i$ to be Lebesgue points for $u$. Moreover, observe that we can bound the number of points by $n\leq 16\lambda_{\text{\tiny\sc A}}^4+2$. Therefore, applying the estimate \eqref{e:contu7} to each pair $(y_i,y_{i+1})$ we have
\begin{align*}
|u(y)-u(z)| &\leq \sum_{i=1}^{n-1} |u(y_i)-u(y_{i+1})|
\leq C\sum_{i=1}^{n-1}\eta\Big(1+\omega(u,x,r)+ \log\frac{r}{4\lambda_{\text{\tiny\sc A}}^2\eta}\Big) \\
&\leq nC\frac{|y-z|}{4\lambda_{\text{\tiny\sc A}}^2}\Big(1+\omega(u,x,r)+ \log\frac{r}{|y-z|}\Big),
\end{align*}
which concludes the proof.
\end{proof}

We are now in position to prove the H\"{o}lder continuity of $u$. 

\begin{prop}\label{p:holcontu}
The function $u$ is locally $\alpha$-H\"{o}lder continuous in $D$ for every $\alpha\in (0,1)$, that is, for every compact set $K\subset D$, there exist $r_K>0$ and $C_K>0$ such that for every $x\in K$ we have
\begin{equation}\label{e:holcontu}
|u(y)-u(z)|\leq C_K|y-z|^\alpha\qquad\text{for every}\qquad y,z\in B_{r_K}(x).
\end{equation}
\end{prop}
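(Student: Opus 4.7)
The plan is to deduce H\"older continuity directly from the log-Lipschitz estimate of Proposition \ref{p:contu}, using the fact that $t|\log t| = o(t^\alpha)$ as $t\to 0^+$ for every $\alpha\in(0,1)$. No iteration on dyadic scales is really needed: the pointwise inequality \eqref{e:contu} already encodes almost-Lipschitz behaviour, and it only remains to absorb the logarithm into a power, after checking that the term $\omega(u,x,r)$ can be made uniform in $x$ for $x$ ranging in a compact subset of $D$.

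Fix a compact set $K\subset D$. First I would choose $r_K>0$ small enough so that $2r_K\le r_0$ (where $r_0$ is the radius from Proposition \ref{p:contu}) and so that $B_{2r_K}(x)\subset D$ for every $x\in K$. The second step is to bound $\omega(u,x,2r_K)$ uniformly in $x\in K$: since $u\in H^1(\R^d)$, one has
\[
\omega(u,x,2r_K)=\left(\aver{B_{2r_K}(x)}|\nabla u|^2\right)^{1/2}\le \frac{\|\nabla u\|_{L^2(D)}}{\sqrt{\omega_d\,(2r_K)^d}}=:M,
\]
and $M$ is a finite constant depending only on $K$ (and on $u$). Applying Proposition \ref{p:contu} with radius $2r_K$, for every $x\in K$ and every $y,z\in B_{r_K}(x)$ we obtain
\[
|u(y)-u(z)|\le C\Big(1+M+\log\tfrac{2r_K}{|y-z|}\Big)|y-z|.
\]

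The final step is the elementary observation that, for fixed $\alpha\in(0,1)$, the function $t\mapsto t^{1-\alpha}\log(2r_K/t)$ is continuous on $(0,2r_K]$, vanishes at both endpoints, and is therefore bounded on that interval by a constant $M_\alpha=M_\alpha(r_K,\alpha)$. Since $|y-z|\le 2r_K$, this yields $|y-z|\log(2r_K/|y-z|)\le M_\alpha |y-z|^\alpha$, and trivially $|y-z|\le (2r_K)^{1-\alpha}|y-z|^\alpha$. Combining,
\[
|u(y)-u(z)|\le C_K\,|y-z|^\alpha, \qquad C_K:=C(1+M)(2r_K)^{1-\alpha}+CM_\alpha,
\]
which is precisely \eqref{e:holcontu}.

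I do not expect any serious obstacle here: the only subtlety is to notice that the gradient-average term $\omega(u,x,r)$ in \eqref{e:contu} is uniformly controlled on compact subsets of $D$ at a fixed scale $r=2r_K$, which follows immediately from $u\in H^1(\R^d)$. Once that is observed, the remainder of the argument is a one-line calculus fact about $t\log(1/t)$ versus $t^\alpha$.
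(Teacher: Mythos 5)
Your proof is correct and reaches the conclusion by the same central mechanism as the paper (Proposition \ref{p:contu} plus the calculus fact that $t^{1-\alpha}\log(1/t)$ is controlled near $0$), but the two write-ups differ in how the secondary details are handled, and yours is in fact a bit cleaner. The paper fixes $r_1=\min\{r_0,\dist(K,D^c)\}$, uses only the \emph{monotonicity} of $r\mapsto r^{1-\alpha}\log(r_1/r)$ on a small interval $(0,c_\alpha)$, and therefore needs to split into the cases $|y-z|\le c_\alpha$ and $|y-z|>c_\alpha$ (the latter handled by a chain of intermediate points), and it then obtains a uniform $C_K$ via a finite subcover of $K$, taking the maximum of finitely many $\omega(u,x_i,r_1)$. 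You instead observe that $t\mapsto t^{1-\alpha}\log(2r_K/t)$ is bounded on the whole interval $(0,2r_K]$ (vanishing at both endpoints and continuous in between), which removes the case split entirely, and you bound $\omega(u,x,2r_K)$ uniformly in $x\in K$ directly from $\nabla u\in L^2(\R^d)$ via
\[
\omega(u,x,2r_K)\le \frac{\|\nabla u\|_{L^2}}{\sqrt{\omega_d(2r_K)^d}},
\]
which replaces the compactness argument. Both routes are valid; yours buys a shorter and more transparent argument, while the paper's version is the one that generalizes naturally to the later results (Lemma \ref{l:estholcontu}, Proposition \ref{p:strposlip}) where the analogous $\omega$-quantity is no longer controlled by a global $L^2$ norm alone and a compactness-type argument or pointwise propagation is genuinely needed. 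One small thing you might make explicit: the reason $\nabla u\in L^2(\R^d)$ and hence $\|\nabla u\|_{L^2(D)}<\infty$ is that $u\in H^1_0(\Omega)$ is extended by zero outside $\Omega$, per the convention fixed in the Notations.
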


\begin{proof}
Let $x\in K$ and set $4r_K=r_1 = \min\{r_0,\text{dist}(K,D^c)\}$ where $r_0$ is given by Proposition \ref{p:contu}. Since the function $r\mapsto r^{1-\alpha}\log(r_1/r)$ is non-decreasing on $(0,c_\alpha)$ for some constant $c_\alpha>0$ depending on $\alpha$ and $r_1$,
%c_\alpha=r_1e^{1/(\alpha-1)}
it follows from Proposition \ref{p:contu} that, if $y,z \in B_{r_1/2}(x)$ are such that $|y-z|\leq c_\alpha$, we have
\begin{align}\label{e:holcontu1}
\nonumber|u(y)-u(z)| &\leq C\Big(r_1^{1-\alpha}(1+\omega(u,x,r_1)) + c_\alpha^{1-\alpha}\log\frac{r_1}{c_\alpha}\Big)|y-z|^\alpha \\
&\leq C(1+\omega(u,x,r_1))|y-z|^\alpha 
\end{align}
If now $|y-z|>c_\alpha$, then choose $n$ points $y_1=y,\dots,y_n=z$ in $B_{r_1/2}(x)$ such that $|y_i-y_{i+1}|=c_\alpha r_1^{-1}|y-z|$, with $n$ bounded by some constant depending on $\alpha$ and $r_1$. 
Then apply \eqref{e:holcontu1} to each pair $(y_i,y_{i+1})$ to prove that $u$ is $\alpha$-H\"{o}lder continuous in the ball $B_{r_1/2}(x)$ with a modulus of continuity depending on $\omega(u,x,r_1)$. 
Now, \eqref{e:holcontu} follows by a compactness argument with the constant $C_K$ depending on $\max\{\omega(u,x_i,r_1),\, i=\dots N\}$, where the $x_i$'s are given by some subcovering of $K\subset \cup_{i=1}^N B_{r_K}(x_i)$. 
\end{proof}

\subsection{Bound of the Lipschitz constant in $\{u>0\}$}\label{sub:boundlipcstustrpos}
We prove that $u$ is Lipschitz continuous and even $C^{1,\beta}$-regular in the regions where $u$ keeps the same sign. We also provide in this case an estimate of the Lipschitz constant of $u$ in terms of $\omega(u,x,r)$ (see Proposition \ref{p:strposlip}. Then, we show that under suitable conditions, $u$ keeps the same sign and is therefore Lipschitz continuous (see Proposition \ref{p:lipcase1}).

\begin{prop}\label{p:strposlip}
Let $K\subset D$ be a compact set. There exist constants $r_K>0$ and $C_K>0$ such that, if $x\in K$ and $r\leq r_K$ satisfy
\begin{equation}\label{e:strposlipa}
\text{either}\qquad u_x>0 \text{ a.e. in } B_r\qquad\text{or}\qquad u_x<0 \text{ a.e. in } B_r,
\end{equation}
then $u$ is Lipschitz continuous in $B_{r/2}(x)$ and we have
\begin{equation}\label{e:strposlipb}
|u(y)-u(z)|\leq C_K(1+\omega(u,x,r))|y-z|\qquad\text{for every}\qquad y,z\in B_{r/2}(x).
\end{equation}
Moreover, $u$ is $C^{1,\beta}$ in the ball $B_{r/4}(x)$ where $\beta=\frac{\delta_{\scalebox{.5}{A}}}{d+\delta_{\scalebox{.5}{A}}+2}$ and we have
\begin{equation}\label{e:strposlipc}
|\nabla u(y)-\nabla u(z)|\leq C_Kr^{-\frac{\delta_{\scalebox{.5}{A}}}{d+2}}(1+\omega(u,x,r))|y-z|^\beta\qquad\text{for every}\qquad y,z\in B_{r/4}(x).
\end{equation}
\end{prop}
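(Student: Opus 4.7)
The plan is to exploit the sign condition \eqref{e:strposlipa} to strengthen the quasi-minimality of Proposition \ref{p:quminux} by removing the $Cr^d$ additive error, and then to redo the arguments of Lemma \ref{l:estcontu} and Proposition \ref{p:contu} with this improved inequality. The key observation is that the $Cr^d$ term arose from bounding the volume penalty $\Lambda(|\{|\tilde U|>0\}|-|\{|U|>0\}|)$ by $\Lambda|B_\rho|$; if $u_x>0$ on $B_r$ and we choose a competitor $\tilde u$ which is positive on $B_r$ and coincides with $u_x$ outside some $B_t\subset B_r$, then the associated vector $\tilde U=(u_1,\dots,\tilde u\circ F_x^{-1},\dots,u_k)$ satisfies $\{|\tilde U|>0\}=\{|U|>0\}$ up to a null set, so that penalty vanishes. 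Repeating the proof of Proposition \ref{p:quminux} accordingly gives the improved local quasi-minimality
\[
\int_{B_t}|\nabla u_x|^2\le(1+Ct^{\delta_{\text{\tiny\sc A}}})\int_{B_t}|\nabla\tilde u|^2
\]
for every such $\tilde u$.

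In particular, the harmonic extension $h_t$ of $u_x|_{\partial B_t}$ inherits the sign of $u_x$ by the maximum principle and obeys the $L^\infty$ constraint, so taking $\tilde u=h_t$ and expanding the Dirichlet energy yields $\int_{B_t}|\nabla(u_x-h_t)|^2\le Ct^{\delta_{\text{\tiny\sc A}}}\int_{B_t}|\nabla u_x|^2$. Running the dyadic iteration of Lemma \ref{l:estcontu} with this estimate, but without the previous additive constant that was the source of the logarithm, gives the uniform bound $\omega(u_x,s)\le C\omega(u_x,r)$ for all $s\le r$. Plugging this uniform control into the chain-of-points argument of Proposition \ref{p:contu} replaces the $\log(r/|y-z|)$ term by a constant and delivers the Lipschitz estimate \eqref{e:strposlipb}, after converting $\omega(u_x,r)$ into $\omega(u,x,r)$ via the ellipticity of $A$ and the change of variables $F_x$.

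For the $C^{1,\beta}$ statement, I would rely on a Campanato-type decay for $\nabla u_x$. For every $s\le t\le r$, standard interior estimates for the harmonic function $h_t$ give
\[
\aver{B_s}|\nabla h_t-(\nabla h_t)_{B_s}|^2\le C(s/t)^2\aver{B_t}|\nabla h_t|^2\le C(s/t)^2\omega(u_x,r)^2,
\]
while from the improved inequality one deduces
\[
\aver{B_s}|\nabla(u_x-h_t)|^2\le C(t/s)^dt^{\delta_{\text{\tiny\sc A}}}\omega(u_x,r)^2.
\]
Combining these two bounds,
\[
\inf_{a\in\R^d}\aver{B_s}|\nabla u_x-a|^2\le C\omega(u_x,r)^2\Big[(s/t)^2+(t/s)^dt^{\delta_{\text{\tiny\sc A}}}\Big],
\]
and balancing the two terms in $K:=t/s$ (the stationary point is $K=c\,s^{-\delta_{\text{\tiny\sc A}}/(d+\delta_{\text{\tiny\sc A}}+2)}$) produces a decay of order $s^{2\beta}$ with $\beta=\delta_{\text{\tiny\sc A}}/(d+\delta_{\text{\tiny\sc A}}+2)$, provided the optimal $t=Ks$ stays below $r$; tracking this cut-off generates the prefactor $r^{-\delta_{\text{\tiny\sc A}}/(d+2)}$ in \eqref{e:strposlipc}. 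Campanato's characterization of H\"{o}lder continuity then yields $\nabla u_x\in C^{0,\beta}(B_{r/4})$ with the quantitative bound, and pulling back through $F_x$ transfers the estimate to $u$.

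The main technical point is this last step: one has to carefully balance the two scales and treat separately the regime where the optimal $t$ would exceed $r$ (in which case the trivial bound $\aver{B_s}|\nabla u_x|^2\le C\omega(u_x,r)^2$ is compared directly to $s^{2\beta}$, again producing the $r^{-\delta_{\text{\tiny\sc A}}/(d+2)}$ prefactor). The earlier steps are essentially reorganisations of arguments already present in the paper, with the decisive structural gain that $\omega(u_x,\cdot)$ is now bounded instead of growing logarithmically, which is precisely what the sign hypothesis buys us through the removal of the volume penalty.
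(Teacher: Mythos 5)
The central claim of your proposal --- that the sign hypothesis removes the additive error entirely and yields the clean inequality
\[
\int_{B_t}|\nabla u_x|^2\le(1+Ct^{\delta_{\text{\tiny\sc A}}})\int_{B_t}|\nabla\tilde u|^2
\]
--- is incorrect, and this is a genuine gap. The sign condition \eqref{e:strposlipa} does kill the volume penalty, so the term $\Lambda\big(|\{|\tilde U|>0\}|-|\{|U|>0\}|\big)$ drops out. But the quasi-minimality hypothesis \eqref{e:main2b} still carries the multiplicative factor $\big(1+C\|U-\tilde U\|_{L^1}\big)$ in front of the \emph{global} Dirichlet energy $\int_D A\nabla\tilde U\cdot\nabla\tilde U$. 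After subtracting the energy outside $F_x(B_t)$, this contributes an additive error of order $\|U-\tilde U\|_{L^1}\cdot\big(\text{global constant}\big)$, which does not vanish. The best trivial bound is $\|U-\tilde U\|_{L^1}\lesssim t^d$ (from $L^\infty$ bounds), and dividing by $|B_t|\sim t^d$ this produces an $O(1)$ error in $\omega(u_x,\cdot)^2$ --- no improvement over Proposition~\ref{p:quminux}. Run that through your dyadic iteration and the logarithm reappears, so the Lipschitz estimate would \emph{not} follow.

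What saves the paper's argument --- and what your proposal omits --- is that the H\"{o}lder continuity of $u$ established in Proposition~\ref{p:holcontu} is invoked before this stage: since $u-v$ vanishes on $\partial F_x(B_t)$ and $u$ is $\alpha$-H\"{o}lder, one gets $\|u-v\|_{L^1}\le C t^{d+\alpha}$, which yields the improved but still nonzero error $Ct^{d+\alpha}$ (this is exactly Lemma~\ref{l:estuxhxr}). The dyadic iteration then gives $\omega(u_x,s)\le C\omega(u_x,r)+Cr^{\alpha/2}$, not the error-free bound you assert; that extra $Cr^{\alpha/2}$ is small but genuinely present, and it is absorbed into the final constant $C_K$ because $r\le r_K<1$. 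The same correction must be threaded through your Campanato balance for the $C^{1,\beta}$ estimate: the additional term $\eta^{-d}\bar r^\alpha$ coming from the $t^{d+\alpha}$ error has to be kept, and the choice $\alpha=\delta_{\text{\tiny\sc A}}$ makes it commensurate with $\eta^{-d}\bar r^{\delta_{\text{\tiny\sc A}}}$, after which your balancing and the exponent $\beta=\delta_{\text{\tiny\sc A}}/(d+\delta_{\text{\tiny\sc A}}+2)$ and the prefactor $r^{-\delta_{\text{\tiny\sc A}}/(d+2)}$ come out as in the paper. So the structure of your Campanato argument is sound, but the statement ``the penalty vanishes, hence zero error'' is the step that fails; without first proving H\"{o}lder continuity and using it to control $\|U-\tilde U\|_{L^1}$, the argument does not close.
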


In the next Lemma we compare the Dirichlet energy of $u_x$ and of its harmonic extension in small balls where $u_x$ has a given sign. The estimate \eqref{e:estuxhxrb} in Lemma \ref{l:estuxhxr} below is similar to \eqref{e:quminux0} but with a smaller error term. Thanks to this improvement, the strategy developed in the proof of Lemma \ref{l:estcontu} will lead to a sharper result than estimate \eqref{e:estcontua}, namely \eqref{e:strposlipb}. 

\begin{lm}\label{l:estuxhxr}
Let $K\subset D$ be a compact set and let $\alpha\in (0,1)$. There exist constants $r_K>0$ and $C>0$ such that, if $x\in K$ and $r\leq r_K$ are such that \eqref{e:strposlipa} holds, then the function $u_x=u\circ F_x$ satisfies
\begin{equation}\label{e:estuxhxrb}
\int_{B_r}|\nabla u_x|^2\leq(1+Cr^{\delta_{\text{\tiny\sc A}}})\int_{B_r}|\nabla h_r|^2 + Cr^{d+\alpha},
\end{equation}
where $h_r$ stands for the harmonic extension of the trace of  $u_x$ to $\partial B_r$.
\end{lm}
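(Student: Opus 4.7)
The strategy is to apply the quasi-minimality \eqref{e:main2b} with a competitor built from the harmonic extension $h_r$, and to use the sign hypothesis \eqref{e:strposlipa} to kill the volume contribution that was responsible for the $Cr^d$ error in Proposition \ref{p:quminux}. First I set $v:=h_r\circ F_x^{-1}$ on $F_x(B_r)$ and $v:=u$ elsewhere; since $h_r$ and $u_x$ share the same trace on $\partial B_r$, $v\in H^1(\R^d)$. The vector $\tilde U$ obtained by replacing only the coordinate $u=u_i$ of $U$ by $v$ agrees with $U$ outside $F_x(B_r)$ and satisfies $\|\tilde U\|_{L^\infty}\le\|U\|_{L^\infty}$ by the maximum principle for $h_r$, so it is an admissible competitor in \eqref{e:main2b} provided $r_K$ is small enough.

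The crucial point is that under \eqref{e:strposlipa} the trace of $u_x$ on $\partial B_r$ has constant sign, so by the maximum principle $h_r$ (and hence $v$) has the same sign as $u$ on $F_x(B_r)$; consequently $\{|\tilde U|>0\}\subset\{|U|>0\}$ up to a set of measure zero, and the volume difference on the right-hand side of \eqref{e:main2b} is non-positive. Plugging $\tilde U$ into \eqref{e:main2b}, dropping the volume term and cancelling the contributions of the coordinates $j\ne i$ and of the complement of $F_x(B_r)$, I obtain
\[ \int_{F_x(B_r)}A\nabla u\cdot\nabla u \le \int_{F_x(B_r)}A\nabla v\cdot\nabla v + C\|u-v\|_{L^1(F_x(B_r))}\int_D A\nabla\tilde U\cdot\nabla\tilde U, \]
and the last factor is bounded by a constant depending only on $K$ (via the equation \eqref{e:main2a} tested with $U$, together with the minimality $\int_{B_r}|\nabla h_r|^2\le\int_{B_r}|\nabla u_x|^2$). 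To estimate $\|u-v\|_{L^1(F_x(B_r))}=\det(A_x^{1/2})\,\|u_x-h_r\|_{L^1(B_r)}$, I invoke the H\"older continuity of Proposition \ref{p:holcontu} with the given exponent $\alpha$: $u_x$ is $\alpha$-H\"older on $B_r$, and by the maximum principle the oscillation of $h_r$ on $B_r$ is bounded by the oscillation of its boundary values on $\partial B_r$; since $u_x$ and $h_r$ coincide on $\partial B_r$ and each has oscillation at most $Cr^\alpha$ on $B_r$, it follows that $\|u_x-h_r\|_{L^\infty(B_r)}\le Cr^\alpha$ and hence $\|u-v\|_{L^1(F_x(B_r))}\le Cr^{d+\alpha}$.

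To conclude I convert the integrals over $F_x(B_r)$ into integrals of $|\nabla\cdot|^2$ over $B_r$ by the same change of variables and H\"older-in-$x$ freezing of the coefficients $A$ as in the proof of Proposition \ref{p:quminux}; this produces a factor $(1+Cr^{\delta_{\text{\tiny\sc A}}})$ in front of $\int_{B_r}|\nabla h_r|^2$, while the error term becomes $Cr^{d+\alpha}$, yielding \eqref{e:estuxhxrb}. The main obstacle is the careful handling of the volume term: without the sign hypothesis, the competitor $v$ may create additional positivity set and the argument only reproduces the $Cr^d$ error of Proposition \ref{p:quminux}; it is precisely the inclusion $\{|\tilde U|>0\}\subset\{|U|>0\}$ forced by \eqref{e:strposlipa}, together with the H\"older bound on $\|u_x-h_r\|_{L^\infty}$, that allows one to trade $Cr^d$ for the strictly smaller $Cr^{d+\alpha}$, which is the key improvement needed to upgrade the iteration of Lemma \ref{l:estcontu} from a logarithmic estimate to the Lipschitz bound \eqref{e:strposlipb}.
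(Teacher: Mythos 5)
Your proof is correct and follows essentially the same route as the paper: the same competitor $v=h_r\circ F_x^{-1}$ glued to $u$ outside $F_x(B_r)$, the same use of the sign condition \eqref{e:strposlipa} to make the measure term harmless (the paper notes $|\{|\tilde U|>0\}|=|\{|U|>0\}|$, you use the inclusion, which suffices), the same H\"older-based bound $\|u-v\|_{L^1}\leq Cr^{d+\alpha}$ via Proposition \ref{p:holcontu} and the maximum principle, and the same freezing-of-coefficients change of variables as in Proposition \ref{p:quminux} to produce the factor $(1+Cr^{\delta_{\text{\tiny\sc A}}})$. The only cosmetic difference is that you bound $\int_D A\nabla\tilde U\cdot\nabla\tilde U$ by a fixed constant while the paper keeps the term $\int_{F_x(B_r)}A\nabla v\cdot\nabla v$ explicit; both yield \eqref{e:estuxhxrb}.
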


\begin{proof}
Set $\rho:=\lambda_{\text{\tiny\sc A}}r$ for some $r>0$ small enough so that $B_{\rho}(x)\subset D$. We define $v\in H^1_0(D)$ by $h_r=v\circ F_x$ in $B_r$ and $v=u$ elsewhere so that we have $u-v\in H^1_0(F_x(B_r))$. Set $\tilde{U}=(u_1,\dots,v,\dots,u_k)\in H^1_0(D,\R^k)$ and observe that $|\{|\tilde{U}|>0\}|=|\{|U|>0\}|$ by \eqref{e:strposlipa} and because $v>0$ in $F_x(B_r)$. Then, we use $\tilde{U}$ as a test function in \eqref{e:main2a} to get 
\[ \int_{F_x(B_r)}A\nabla u\cdot\nabla u \leq \int_{F_x(B_r)}A\nabla v\cdot\nabla v + C\|u-v\|_{L^1} \int_DA\nabla\tilde{U}\cdot\nabla\tilde{U}, \]
where $C$ is the constant from Theorem \ref{t:main2}.
Now, since $u$ is locally $\alpha$-H\"{o}lder continuous, we have the bound $\|u-v\|_{L^1}\leq C_dC_Kr^{d+\alpha}$, where the constant $C_K$ is given by Proposition \ref{p:holcontu}. Moreover we have the estimate
\[ \int_DA\nabla\tilde{U}\cdot\nabla\tilde{U} \leq \int_DA\nabla U\cdot\nabla U + \int_{F_x(B_r)}A\nabla v\cdot\nabla v. \]
Altogether this gives
\begin{equation*}
\int_{F_x(B_r)}A\nabla u\cdot\nabla u \leq (1+\tilde{C}r^{d+\alpha})\int_{F_x(B_r)}A\nabla v\cdot\nabla v + \tilde{C}r^{d+\alpha},
\end{equation*}
for some constant $\tilde{C}$ which involves $\int_DA\nabla U\cdot\nabla U$. Finally, using the H\"{o}lder continuity and the ellipticity of $A$ as in the proof of Proposition \ref{p:quminux}, we get
\[ \int_{B_r}|\nabla u_x|^2  \leq (1+dc_{\text{\tiny\sc A}}\lambda_{\text{\tiny\sc A}}^2\rho^{\delta_{\text{\tiny\sc A}}})\bigg[ \frac{1+\tilde{C}r^{d+\alpha}}{1-dc_{\text{\tiny\sc A}}\lambda_{\text{\tiny\sc A}}^2\rho^{\delta_{\text{\tiny\sc A}}}}\int_{B_r}|\nabla h_r|^2 + \lambda_{\text{\tiny\sc A}}^d\tilde{C}r^{d+\alpha} \bigg]. \]
which gives \eqref{e:estuxhxrb}.
\end{proof}

Next Lemma is analogue to Lemma \ref{l:estcontu} with a better estimate of the error term. Its proof is quite similar but we nonetheless sketch the argument since there are small differences. 

\begin{lm}\label{l:estholcontu}
Let $K\subset D$ be a compact set and $\alpha\in (0,1)$. There exist constants $r_K>0$ and $C>0$ such that, for every $x\in K$ and every $r\leq r_K$ such that \eqref{e:strposlipa} holds, we have
\begin{equation}\label{e:estholcontua}
\omega(u_x,s)\leq C\omega(u_x,r)+Cr^{\alpha/2}\qquad\text{for every}\qquad 0<s\leq r.
\end{equation}
If, moreover, $x$ is a Lebesgue point for $u$, we have
\begin{equation}\label{e:estholcontub}
\bigg|u(x)-\aver{B_r}u_x\bigg| \leq Cr(\omega(u_x,r)+r^{\alpha/2}).
\end{equation}
\end{lm}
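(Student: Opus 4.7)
The plan is to follow the proof of Lemma \ref{l:estcontu} step by step, but systematically replace the use of the crude energy comparison from Proposition \ref{p:quminux} with the sharper one from Lemma \ref{l:estuxhxr}. The key point is that the error term $Cr^d$ in \eqref{e:quminux0} is what produced the additive constant $C$ at each scale of the iteration, and summing those constants over a dyadic sequence of radii was exactly what generated the $\log(r/s)$ factor in \eqref{e:estcontua}. In the present setting, Lemma \ref{l:estuxhxr} provides an error of size $Cr^{d+\alpha}$, and an $\alpha$-summable geometric series replaces the logarithmically divergent one.

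Concretely, for $t \le r$ let $h_t$ denote the harmonic extension in $B_t$ of the trace of $u_x$ on $\partial B_t$. Testing \eqref{e:estuxhxrb} with $h_t$ (which is itself the minimizer of the Dirichlet energy in $B_t$) yields
\begin{equation*}
\int_{B_t} |\nabla(u_x - h_t)|^2 \;=\; \int_{B_t}|\nabla u_x|^2 - \int_{B_t}|\nabla h_t|^2 \;\le\; C t^{\delta_{\text{\tiny\sc A}}}\int_{B_t}|\nabla u_x|^2 + C t^{d+\alpha}.
\end{equation*}
Dividing by $t^d$ and combining with the subharmonicity estimate $\fint_{B_s}|\nabla h_t|^2 \le \fint_{B_t}|\nabla h_t|^2 \le \omega(u_x,t)^2$ (valid for $s \le t$) via the triangle inequality gives, in full analogy with \eqref{e:estcontu3},
\begin{equation*}
\omega(u_x,s) \;\le\; \Bigl(1 + C(t/s)^{d/2} t^{\delta_{\text{\tiny\sc A}}/2}\Bigr)\,\omega(u_x,t) + C(t/s)^{d/2} t^{\alpha/2}.
\end{equation*}

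Applying this inequality along the dyadic sequence $r_i = 2^{-i} r$ yields the recursion
\begin{equation*}
\omega(u_x, r_i) \;\le\; \bigl(1 + C r_{i-1}^{\delta_{\text{\tiny\sc A}}/2}\bigr)\, \omega(u_x, r_{i-1}) + C r_{i-1}^{\alpha/2},
\end{equation*}
and iterating as in \eqref{e:estcontu4} gives
\begin{equation*}
\omega(u_x, r_i) \;\le\; C \omega(u_x,r) + C \sum_{j=0}^{i-1} r_j^{\alpha/2} \;\le\; C \omega(u_x, r) + C r^{\alpha/2},
\end{equation*}
since the infinite product $\prod_j (1+C r_j^{\delta_{\text{\tiny\sc A}}/2})$ is bounded and $\sum_j r_j^{\alpha/2}$ is a convergent geometric series of ratio $2^{-\alpha/2}<1$ whose sum is controlled by $C r^{\alpha/2}$. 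For arbitrary $s\in (0,r]$, pick $i$ with $r_{i+1}< s\le r_i$ and note $\omega(u_x,s)\le 2^{d/2}\omega(u_x,r_i)$; this establishes \eqref{e:estholcontua}.

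For \eqref{e:estholcontub}, set $m_i = \fint_{B_{r_i}} u_x$. The Poincar\'e inequality together with \eqref{e:estholcontua} produces
\begin{equation*}
\Bigl(\fint_{B_{r_i}} |u_x - m_i|^2\Bigr)^{1/2} \;\le\; C r_i\, \omega(u_x, r_i) \;\le\; C r_i \bigl(\omega(u_x, r) + r^{\alpha/2}\bigr).
\end{equation*}
Exactly as at the end of the proof of Lemma \ref{l:estcontu}, $0$ is a Lebesgue point of $u_x$ (since $x$ is a Lebesgue point for $u$ and $F_x$ is bi-Lipschitz), so $m_i\to u(x)$. A telescoping sum combined with the Cauchy--Schwarz inequality then yields
\begin{equation*}
|u(x) - m_0|\;\le\; 2^d\sum_{j=0}^{\infty} \Bigl(\fint_{B_{r_j}} |u_x - m_j|^2\Bigr)^{1/2} \;\le\; C\sum_{j=0}^{\infty} r_j\bigl(\omega(u_x,r) + r^{\alpha/2}\bigr) \;\le\; C r\bigl(\omega(u_x,r) + r^{\alpha/2}\bigr),
\end{equation*}
which is \eqref{e:estholcontub}. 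I do not anticipate any serious obstacle; the only point requiring care is checking that the replacement of the additive $C$ by $Cr_j^{\alpha/2}$ in the recursion really is geometrically summable independently of the iteration depth, which is exactly what upgrades the logarithmic control of Lemma \ref{l:estcontu} to the polynomial control here.
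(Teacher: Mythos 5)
Your proposal is correct and follows essentially the same route as the paper's own proof: invoke Lemma \ref{l:estuxhxr} in place of Proposition \ref{p:quminux} so that the per-scale error becomes $Cr_{j}^{\alpha/2}$, iterate along dyadic radii where the geometric series replaces the divergent logarithmic sum, and then repeat the Poincar\'e--telescoping argument of Lemma \ref{l:estcontu} for the second estimate. No gaps to report.
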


\begin{proof}
For $t\leq r\leq r_K$ we have by Lemma \ref{l:estuxhxr}
\begin{equation}\label{e:estholcontu1}
\int_{B_t}|\nabla(u_x-h_t)|^2 = \int_{B_t}|\nabla u_x|^2 - \int_{B_t}|\nabla h_t|^2 \leq Ct^{\delta_{\text{\tiny\sc A}}}\int_{B_t}|\nabla u_x|^2 + Ct^{d+\alpha},
\end{equation}
since $h_t$ is a minimizer of the Dirichlet energy on $B_t$. 
Now, for $s\leq t\leq r_0$ we use \eqref{e:estcontu2} and \eqref{e:estholcontu1} to estimate as in \eqref{e:estcontu3}
\begin{equation*}
\nonumber\omega(u_x,s)\leq \Big(1+C\Big(\frac{t}{s}\Big)^{d/2}t^{\delta_{\text{\tiny\sc A}}/2}\Big)\omega(u_x,t) + C\Big(\frac{t}{s}\Big)^{d/2}t^{\alpha/2},
\end{equation*}
which, applied to $s=2^{-i}r$ and $t=2^{-(i-1)}r$, gives
\[ \omega(u_x,r_i) \leq \Big(1+Cr_{i-1}^{\delta_{\text{\tiny\sc A}}/2}\Big)\omega(u_x,r_{i-1})+Cr_{i-1}^{\alpha/2}, \qquad i\geq 1, \]
where we have set $r_i=2^{-i}r$.
Iterating the above estimate we get for every $i\geq 1$
\begin{align*}
\nonumber\omega(u_x,r_i) &\leq \omega(u_x,r)\prod_{j=0}^{i-1}\Big(1+Cr_j^{\delta_{\text{\tiny\sc A}}/2}\Big)+C\sum_{j=1}^{i-1}\bigg(r_{j-1}^{\alpha/2}\prod_{l=j}^{i-1}\Big(1+Cr_l^{\delta_{\text{\tiny\sc A}}/2}\Big)\bigg)+Cr_{i-1}^{\alpha/2}\\
&\leq C\omega(u_x,r)+Cr^{\alpha/2},
\end{align*}
since $\prod_{j=0}^{\infty}\big(1+Cr_j^{\delta_{\text{\tiny\sc A}}/2}\big)$ is bounded by a constant depending on $r_K$. This proves \eqref{e:estholcontua}.

Finally, \eqref{e:estholcontub} is proved in the same way than \eqref{e:estcontub} but with \eqref{e:estcontu5} replaced by the estimate
\begin{equation*}
\left(\aver{B_{r_i}}|u_x-m_i|^2\right)^{1/2} \leq Cr_i\omega(u_x,r_i)\leq Cr_i(\omega(u_x,r)+r^{\alpha/2}).
\end{equation*}
\end{proof}

\begin{proof}[Proof of Proposition \ref{p:strposlip}]
Let us first prove \eqref{e:strposlipb}. We follow the proof of Proposition \ref{p:contu} and we only detail the few differences.
Let $y,z\in B_{r/2}(x)$ be Lebesgue points for $u$ and set $\delta=|y-z|$. We first assume that $4\lambda_{\text{\tiny\sc A}}^2\delta \leq r$. By \eqref{e:estholcontub} we have
\begin{equation}\label{e:strposli1}
\bigg|u(y)-\aver{B_{2\lambda_{\scalebox{.5}{A}}\delta}}u_y\bigg| \leq C\delta(\omega(u_y,2\lambda_{\text{\tiny\sc A}}\delta)+\delta^{\alpha/2}),
\end{equation}
and, using also \eqref{e:contu2},
\begin{equation}\label{e:strposli2}
\bigg|u(z)-\aver{B_{\lambda_{\scalebox{.5}{A}}^{-1}\delta}}u_z\bigg| \leq C\delta(\omega(u_z,\lambda_{\text{\tiny\sc A}}^{-1}\delta)+r^{\alpha/2}) \leq C\delta(\omega(u_y,2\lambda_{\text{\tiny\sc A}}\delta)+\delta^{\alpha/2}).
\end{equation}
Moreover, by \eqref{e:estholcontua} we have
\begin{equation}\label{e:strposli3}
\omega(u_y,2\lambda_{\text{\tiny\sc A}}\delta) \leq C\omega(u_y,(2\lambda_{\text{\tiny\sc A}})^{-1}r) + Cr^{\alpha/2}.
\end{equation}
Then, combining \eqref{e:strposli1}, \eqref{e:contu1}, \eqref{e:strposli2} and then \eqref{e:strposli3} and \eqref{e:contu6} we have
\begin{align}\label{e:strposli4}
\nonumber|u(y)-u(z)| &\leq C\delta(\omega(u_y,2\lambda_{\text{\tiny\sc A}}\delta)+\delta^{\alpha/2}) 
\leq C\delta(1+\omega(u,x,r)+ r^{\alpha/2}) \\
&\leq C(1+\omega(u,x,r))|y-z|.
\end{align}
Finally, if $4\lambda_{\text{\tiny\sc A}}^2\delta > r$, we argue as in the proof of Proposition \ref{p:contu} and choose a few number of points which connect $y$ and $z$ to prove \eqref{e:strposli4}.

We now prove the estimate \eqref{e:strposlipc}. Let $y\in B_{r/4}(x)$ and $\bar{r}\leq\lambda_{\text{\tiny\sc A}}^{-1}r/4$. We set $m(u_y,\rho)=\aver{B_{\rho}}\nabla u_y$ for $\rho\leq\bar{r}$ and $m=\aver{B_{\bar{r}}}\nabla h_{y,\bar{r}}=\nabla h_{y,\bar{r}}(0)$, where $h_{y,\bar{r}}$ denotes the harmonic extension of the trace of $u_y$ to $\partial B_{\bar{r}}$. Let $\eta\in(0,1/4)$. We want to estimate
\begin{equation}\label{e:strposli5}
\aver{B_{\eta \bar{r}}}|\nabla u_y - m(u_y,\eta\bar{r})|^2 \leq \aver{B_{\eta \bar{r}}}|\nabla u_y - m|^2 \leq 2\,\aver{B_{\eta \bar{r}}}|\nabla(u_y - h_{y,\bar{r}})|^2 + 2\,\aver{B_{\eta \bar{r}}}|\nabla h_{y,\bar{r}}-m|^2.
\end{equation}
Firstly, by \eqref{e:estholcontu1} we have
\begin{align}\label{e:strposli6}
\nonumber\aver{B_{\eta \bar{r}}}|\nabla(u_y - h_{y,\bar{r}})|^2 \leq C(\eta\bar{r})^{-d}\aver{B_{\bar{r}}}|\nabla(u_y-h_{y,\bar{r}})|^2 
&\leq C(\eta\bar{r})^{-d}\Big(\bar{r}^{\delta_{\text{\tiny\sc A}}}\int_{B_{\bar{r}}}|\nabla u_y|^2 + \bar{r}^{d+\alpha} \Big) \\
&\leq C\eta^{-d}\bar{r}^{\delta_{\text{\tiny\sc A}}}\omega(u_y,\bar{r})^2 + C\eta^{-d}\bar{r}^\alpha.
\end{align}
Moreover, \eqref{e:strposli4} says that for almost every $z\in B_{r/4}(y)\subset B_{r/2}(x)$ we have $|\nabla u(z)|\leq C(1+\omega(u,x,r))$, which implies that
\begin{equation}\label{e:strposli7}
\omega(u_y,\bar{r})^2 = \aver{B_{\bar{r}}}|\nabla u_y|^2 \leq \lambda_{\text{\tiny\sc A}}^{2(d+1)}\aver{B_{r/4}(y)}|\nabla u|^2\leq C(1+\omega(u,x,r))^2.
\end{equation}
On the other hand, by estimates on harmonic functions (see \cite[Theorem 3.9]{gilbarg-trudinger-01}), the Cauchy-Schwarz inequality and \eqref{e:strposli7} we have for every $\xi\in B_{\eta\bar{r}}$
\begin{align}\label{e:strposli8}
\nonumber|\nabla h_{y,\bar{r}}(\xi)-m| &=|\nabla h_{y,\bar{r}}(\xi)-\nabla h_{y,\bar{r}}(0)| \leq \eta\bar{r}\sup_{B_{\eta\bar{r}}}|\nabla^2 h_{y,\bar{r}}| \leq C\eta\sup_{B_{2\eta\bar{r}}}|\nabla h_{y,\bar{r}}| \\
&\leq C\eta\bigg(\aver{B_{\bar{r}}}|\nabla h_{y,\bar{r}}|\bigg)\leq C\eta\bigg(\aver{B_{\bar{r}}}|\nabla h_{y,\bar{r}}|^2\bigg)^{1/2} \leq C\eta\bigg(\aver{B_{\bar{r}}}|\nabla u_y|^2\bigg)^{1/2}\\
\nonumber& \leq C\eta\,\omega(u_y,\bar{r}) \leq C\eta(1+\omega(u,x,r)),
\end{align}
where $\nabla^2h_{y,\bar{r}}$ stands for the Hessian matrix of $h_{y,\bar{r}}$. Therefore, combining \eqref{e:strposli5}, \eqref{e:strposli6}, \eqref{e:strposli7} and \eqref{e:strposli8} we get
\begin{align}\label{e:strposli9}
\nonumber\aver{B_{\eta \bar{r}}}|\nabla u_y - m(u_y,\eta\bar{r})|^2 &\leq C\eta^{-d}\bar{r}^{\delta_{\text{\tiny\sc A}}}(1+\omega(u,x,r))^2+C\eta^{-d}\bar{r}^\alpha + C\eta^2(1+\omega(u,x,r))^2 \\&\leq C(1+\omega(u,x,r))^2\Big[\eta^{-d}\bar{r}^{\delta_{\text{\tiny\sc A}}}+\eta^{-d}\bar{r}^\alpha+\eta^2\Big].
\end{align}
We set $\alpha=\delta_{\text{\tiny\sc A}}$ (recall that $\alpha\in(0,1)$ was arbitrary). Moreover, we set $\beta=\frac{\delta_{\text{\tiny\sc A}}}{d+\delta_{\text{\tiny\sc A}}+2}$ and $\eta=\bar{r}^{\frac{\delta_{\text{\tiny\sc A}}}{d+2}}$ so that we have $\eta^{-d}\bar{r}^{\delta_{\text{\tiny\sc A}}}=\eta^2=(\eta\bar{r})^{2\beta}$. Notice also that $\eta\bar{r}=\bar{r}^{1+\eps}$, where $\eps=\frac{\delta_{\text{\tiny\sc A}}}{d+2}$. Therefore, \eqref{e:strposli9} implies that for every $y\in B_{r/4}(x)$ and every $\rho\leq\big(\frac{r}{4\lambda_{\text{\tiny\sc A}}}\big)^{1+\eps}$ we have
\begin{equation}\label{e:strposli10}
\aver{B_{\rho}}|\nabla u_y - m(u_y,\rho)|^2 \leq C(1+\omega(u,x,r))^2\rho^{2\beta}.
\end{equation}
Now, let $y,z\in B_{r/4}(x)$ be Lebesgue points for $u$ and set $\delta=|x-y|$. 
We first assume that $2\lambda_{\text{\tiny\sc A}}^2\delta\leq\big(\frac{r}{4\lambda_{\text{\tiny\sc A}}}\big)^{1+\eps}$. Setting $\delta_i=2^{-i}\delta$, $i\geq 0$, we have using \eqref{e:strposli10}
\begin{align}\label{e:strposli11}
\nonumber|\nabla u_y(0)-m(u_y,\delta)|&\leq\sum_{i=0}^{+\infty}|m(u_y,\delta_{i+1})-m(u_y,\delta_i)|\leq\sum_{i=0}^{+\infty}\aver{B_{\delta_{i+1}}}|\nabla u_y-m(u_y,\delta_i)| \\
\nonumber&\leq 2^d\sum_{i=0}^{+\infty}\aver{B_{\delta_i}}|\nabla u_y-m(u_y,\delta_i)| \leq 2^d\sum_{i=0}^{+\infty}\bigg(\aver{B_{\delta_i}}|\nabla u_y-m(u_y,\delta_i)|^2\bigg)^{1/2} \\
&\leq C(1+\omega(u,x,r))\delta^\beta.
\end{align}
Similarly, we have
\begin{equation}\label{e:strposli12}
|\nabla u_z(0)-m(u_z,2\lambda_{\text{\tiny\sc A}}^2\delta)|\leq C(1+\omega(u,x,r))\delta^\beta.
\end{equation}
Moreover, using that $F_z^{-1}\circ F_y(B_\delta)\subset B_{2\lambda_{\scalebox{.5}{A}}^2\delta}$ we have
\begin{align}\label{e:strposli13}
\nonumber|m(u_y,\delta)-m(u_z,2\lambda_{\text{\tiny\sc A}}^2\delta)|&\leq \aver{B_\delta}|\nabla u_y-m(u_z,2\lambda_{\text{\tiny\sc A}}^2\delta)| \\
&\leq \aver{F_z^{-1}\circ F_y(B_\delta)}|A^{\sfrac12}_y A^{-\sfrac12}_z\nabla u_z-m(u_z,2\lambda_{\text{\tiny\sc A}}^2\delta)| \\
\nonumber&\leq (2\lambda_{\text{\tiny\sc A}}^2)^{2d}\aver{B_{2\lambda_{\scalebox{.5}{A}}\delta}}|A^{\sfrac12}_y A^{-\sfrac12}_z\nabla u_z-m(u_z,2\lambda_{\text{\tiny\sc A}}^2\delta)|.
\end{align}
Notice that the matrices $A^{\sfrac12}$ have H\"{o}lder continuous coefficients with exponent $\delta_{\text{\tiny\sc A}}/2$ and hence that $|A^{\sfrac12}_y A^{-\sfrac12}_z-\text{Id}|\leq\lambda_{\text{\tiny\sc A}}|A^{-\sfrac12}_y -A^{-\sfrac12}_z|\leq C\delta^{\delta_{\text{\tiny\sc A}}/2}\leq C\delta^\beta$ (because $\beta\leq\delta_{\text{\tiny\sc A}}/2$). Therefore, using \eqref{e:estholcontua} it follows that
\begin{align}\label{e:strposli14}
\nonumber\aver{B_{2\lambda_{\scalebox{.5}{A}}^2}\delta}|A^{\sfrac12}_y A^{-\sfrac12}_z\nabla u_z -\nabla u_z| &\leq C\delta^\beta\aver{B_{2\lambda_{\scalebox{.5}{A}}^2}\delta}|\nabla u_z| \leq C\delta^\beta\omega(u_z,2\lambda_{\text{\tiny\sc A}}^2\delta) \\
&\leq C\delta^\beta(\omega(u_x,\lambda_{\text{\tiny\sc A}}^{-1}r)+r^{\delta_{\text{\tiny\sc A}}}) \leq C(1+\omega(u,x,r))\delta^\beta.
\end{align}
Furthermore, the triangle inequality in \eqref{e:strposli13} together with \eqref{e:strposli14}, \eqref{e:strposli10} and Cauchy-Schwarz's inequality give
\begin{equation}\label{e:strposli15}
|m(u_y,\delta)-m(u_z,2\lambda_{\text{\tiny\sc A}}^2\delta)|\leq C(1+\omega(u,x,r))\delta^\beta.
\end{equation}
Now, \eqref{e:strposli11}, \eqref{e:strposli15} and \eqref{e:strposli12} infer
\begin{align*}
|\nabla u_y(0)-\nabla u_z(0)| &\leq |\nabla u_y(0)-m(u_y,\delta)| + |m(u_y,\delta)-m(u_z,2\lambda_{\text{\tiny\sc A}}^2\delta)| + |\nabla u_z(0)-m(u_z,2\lambda_{\text{\tiny\sc A}}^2\delta)| \\
&\leq C(1+\omega(u,x,r))\delta^\beta.
\end{align*}
Since $|\nabla u_y(0)|\leq\lambda_{\text{\tiny\sc A}}|\nabla u(y)|\leq C(1+\omega(u,x,r))$ for almost every $y\in B_{r/4}(x)$ by \eqref{e:strposli4}, we get
\begin{align*}
|\nabla u(y)-\nabla u(z)| &= |A^{-\sfrac12}_y\nabla u_y(0)-A^{-\sfrac12}_z\nabla u_z(0)| \\
&\leq |A^{-\sfrac12}_y\nabla u_y(0)-A^{-\sfrac12}_z\nabla u_y(0)| + |A^{-\sfrac12}_z\nabla u_y(0)-A^{-\sfrac12}_z\nabla u_z(0)| \\
&\leq |A^{-\sfrac12}_y-A^{-\sfrac12}_z||\nabla u_y(0)|+|A^{-\sfrac12}_z||\nabla u_y(0)-\nabla u_z(0)| \\
&\leq  C(1+\omega(u,x,r))\delta^\beta.
\end{align*}
If $|y-z|\geq \big(\frac{r}{4\lambda_{\text{\tiny\sc A}}}\big)^{1+\eps}$, then we can connect $y$ and $z$ through less than $\lambda_{\text{\tiny\sc A}}\big(\frac{4\lambda_{\text{\tiny\sc A}}}{r}\big)^{\eps}+2$ points. This shows \eqref{e:strposlipc} and concludes the proof.
\end{proof}

The strategy to prove Theorem \ref{t:main2} is to show that $\omega(u_x,r)$ cannot become too big as $r$ gets small. In the next Proposition we prove, under some condition (see the first inequality in \eqref{e:lipcase1a} below), that if $\omega(u_x,r)$ is big enough then $u$ keeps the same sign near the point $x$ and is hence Lipschitz continuous by Proposition \ref{p:strposlip}. The case where $\omega(u_x,r)$ is big and this condition fails is treated in the next subsection. We set for $x\in D$ and $r>0$ %such that $F_x(B_r)\subset D$,
\begin{equation*}
b(u_x,r)=\aver{\partial B_r}u_x\,d\mathcal{H}^{d-1} \qquad\text{and}\qquad b^+(u_x,r)=\aver{\partial B_r}|u_x|\,d\mathcal{H}^{d-1}.
\end{equation*}

\begin{prop}\label{p:lipcase1}
Let $K\subset D$ be a compact set and let $\gamma>0$. There exists constants $r_K, C_K>0$ and $\kappa_1>0$ such that, if $x\in K$ and $r\leq r_K$ satisfy 
\begin{equation}\label{e:lipcase1a}
\gamma r(1+\omega(u_x,r)) \leq |b(u_x,r)|\qquad\text{and}\qquad \kappa_1\leq\omega(u_x,r),
\end{equation}
then there exists a constant $c>0$ (independent from $x$ and $r$) such that $u$ is Lipschitz continuous in $B_{cr/2}(x)$ and we have
\begin{equation}\label{e:lipcase1b}
|u(y)-u(z)|\leq C_K(1+\omega(u,x,r))|y-z|\qquad\text{for every}\qquad y,z\in B_{cr/2}(x).
\end{equation}
Moreover, $u$ is $C^{1,\beta}$ in $B_{cr/4}(x)$ where $\beta=\frac{\delta_{\text{\tiny\sc A}}}{d+\delta_{\text{\tiny\sc A}}+2}$ and
 we have
\begin{equation}\label{e:lipcase1c}
|\nabla u(y)-\nabla u(z)|\leq C_Kr^{-\frac{\delta_{\text{\tiny\sc A}}}{d+2}}(1+\omega(u,x,r))|y-z|^\beta\qquad\text{for every}\qquad y,z\in B_{cr/4}(x).
\end{equation}
\end{prop}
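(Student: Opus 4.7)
\textit{Strategy.} Assume without loss of generality that $b(u_x,r) > 0$, so that $b(u_x,r) \geq \gamma r(1+\omega(u_x,r))$, and let $h_r$ be the harmonic extension in $B_r$ of the trace of $u_x$ on $\partial B_r$. The plan is to exploit the largeness of $b(u_x,r)$ relative to $\omega(u_x,r)$ in order to show that $u_x > 0$ almost everywhere in a smaller ball $B_{cr}$, for some $c > 0$ depending only on $\gamma$, and then apply Proposition \ref{p:strposlip} with the radius $cr$ in place of $r$. Once the constant sign is established, the conclusions \eqref{e:lipcase1b} and \eqref{e:lipcase1c} follow immediately from \eqref{e:strposlipb} and \eqref{e:strposlipc}, since by \eqref{e:estcontua} and the ellipticity of $A$ we have $\omega(u,x,cr) \leq C(1+\omega(u,x,r))$, while $(cr)^{-\delta_{\text{\tiny\sc A}}/(d+2)}$ differs from $r^{-\delta_{\text{\tiny\sc A}}/(d+2)}$ only by a constant depending on $c$.

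\medskip

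\textit{Step 1: lower bound on $h_r$.} Since $h_r$ minimizes the Dirichlet energy in $B_r$ with boundary data $u_x|_{\partial B_r}$, $\aver{B_r}|\nabla h_r|^2 \leq \omega(u_x,r)^2$, and the subharmonicity of $|\nabla h_r|^2$ gives $\|\nabla h_r\|_{L^\infty(B_{r/2})} \leq C\omega(u_x,r)$. Combined with the mean-value identity $h_r(0) = b(u_x,r)$, this yields, for $|\xi| \leq cr$ with $c \leq 1/2$,
\[
h_r(\xi) \geq b(u_x,r) - C cr\,\omega(u_x,r) \geq \gamma r(1+\omega(u_x,r)) - C c r \omega(u_x,r),
\]
so fixing $c := \min\{\gamma/(4C),1/2\}$ yields $h_r \geq \tfrac12 \gamma r(1+\omega(u_x,r))$ on $B_{cr}$.

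\medskip

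\textit{Step 2: pointwise upper bound on $(h_r-u_x)^+$ and conclusion.} From the quasi-minimality of $u_x$ (Proposition \ref{p:quminux}) used with $\tilde u = h_r$ as competitor,
\[
\int_{B_r}|\nabla(u_x - h_r)|^2 \leq C r^{\delta_{\text{\tiny\sc A}}}\int_{B_r}|\nabla u_x|^2 + C r^d \leq C r^d\bigl(r^{\delta_{\text{\tiny\sc A}}}\omega(u_x,r)^2 + 1\bigr).
\]
Since the quasi-minimality makes $u_x$ almost harmonic, using the competitors $\tilde u = u_x + (h_r - u_x - k)^+\eta^2$ for cut-offs $\eta$ and levels $k>0$ in \eqref{e:quminux0} produces a Caccioppoli-type inequality for the level sets of $h_r - u_x$. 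A De Giorgi iteration in the spirit of Lemma \ref{l:degiorgi} then upgrades the above integral bound to the pointwise estimate
\[
\sup_{B_{cr}}(h_r - u_x)^+ \leq C\,r\bigl(r^{\delta_{\text{\tiny\sc A}}/2}\omega(u_x,r) + 1\bigr).
\]
Combining this with the lower bound on $h_r$ from Step 1, on $B_{cr}$ we have
\[
u_x \geq \tfrac12 \gamma r(1+\omega(u_x,r)) - C r\bigl(r^{\delta_{\text{\tiny\sc A}}/2}\omega(u_x,r) + 1\bigr),
\]
and choosing $r_K$ small enough so that $Cr_K^{\delta_{\text{\tiny\sc A}}/2} \leq \gamma/8$ and $\kappa_1$ large enough so that $C/\kappa_1 \leq \gamma/8$, the right-hand side is at least $\tfrac14 \gamma r(1+\omega(u_x,r)) > 0$. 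Hence $u_x > 0$ almost everywhere in $B_{cr}$ and Proposition \ref{p:strposlip} applied with radius $cr$ yields the claim.

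\medskip

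\textit{Main obstacle.} The delicate step is the upgrade from the integral to the pointwise estimate on $h_r - u_x$: since $u_x$ does not solve a genuine PDE but only the quasi-minimality inequality \eqref{e:quminux0}, the De Giorgi iteration must be carried out carefully, with explicit tracking of the dependence of all constants on $r$ and $\omega(u_x,r)$, so that both the smallness of $r\leq r_K$ and the largeness of $\omega(u_x,r)\geq \kappa_1$ can be used in absorbing the error terms.
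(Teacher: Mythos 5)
Your strategy — a pointwise lower bound on $h_r$ plus a pointwise upper bound on $(h_r-u_x)^+$, combined to force $u_x>0$ on $B_{cr}$ — differs genuinely from the paper's. The paper establishes the constant sign through a two-scale bootstrap: Lemma \ref{l:firstlmlip} passes from the hypotheses \eqref{e:lipcase1a} to the improved conditions \eqref{e:firstlmlipb} on some $\rho\sim\eta r$, and Lemma \ref{l:seclmlip} then propagates those conditions to every nearby centre $y$ and a smaller radius $\rho_1$ while \emph{doubling} the constant $\tau$. Iterating doubles the constant geometrically, and with Chebyshev's inequality (estimate \eqref{e:firstcaselip2}) one shows that the density of $\{u\le 0\}$ at every $y\in B_{\bar r}(x)$ is zero, whence $u>0$ a.e.\ in $B_{\bar r}(x)$, and Proposition \ref{p:strposlip} finishes. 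This avoids any De Giorgi machinery: one never needs an $L^\infty$ estimate on $h_r-u_x$, only the $L^1$ bound already contained in \eqref{e:firstlmlip3}/\eqref{e:seclmlip3}. What the paper's route buys is precisely that it works with the quantities ($b$, $b^+$, $\omega$) already under control at the $L^1/L^2$ level; what your route would buy, if it worked, is a shorter one-scale argument.

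The genuine gap is Step 2. You recognize it as the main obstacle, but it is more than a technicality. The quasi-minimality \eqref{e:quminux0} does give a Caccioppoli inequality for $w=h_r-u_x$ — testing with $\tilde u = u_x + (w-k)^+\eta^2$ and using the orthogonality $\int_{B_r}\nabla h_r\cdot\nabla\varphi=0$ for $\varphi\in H^1_0(B_r)$ yields, after Young's inequality, something of the form
\[
\int \eta^2|\nabla(w-k)^+|^2 \ \le\ C\int ((w-k)^+)^2|\nabla\eta|^2 \ +\ C\tau^{\delta_{\text{\tiny\sc A}}}\int_{B_\tau}|\nabla u_x|^2 \ +\ C\tau^d
\]
on a ball $B_\tau$. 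But the "source" term $C\tau^{\delta_{\text{\tiny\sc A}}}\int_{B_\tau}|\nabla u_x|^2 + C\tau^d \sim C r^d(r^{\delta_{\text{\tiny\sc A}}}\omega(u_x,r)^2+1)$ does \emph{not} decay as one shrinks the level sets or the radii; it is a fixed additive constant at every step of the De Giorgi iteration. A standard De Giorgi scheme with a constant additive source term produces a bound of the form
\[
\sup_{B_{cr}}(w)^+ \le C\left(\aver{B_r}((w)^+)^2\right)^{1/2} + C\,r^{-d/2}F^{1/2}
\]
where $F$ is the source, and one must check that the second term does not destroy the scaling $r(1+r^{\delta_{\text{\tiny\sc A}}/2}\omega)$ you need. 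With $F \sim r^d(r^{\delta_{\text{\tiny\sc A}}}\omega^2 + 1)$ this gives $r^{-d/2}F^{1/2} \sim (r^{\delta_{\text{\tiny\sc A}}}\omega^2+1)^{1/2}$, which is \emph{not} of order $r(1+\omega)$ — it is missing the factor $r$. In other words, the additive error $Cr^d$ coming from the volume term in \eqref{e:main2b} sits at the wrong power of $r$ for the pointwise bound you claim; Poincaré recovers the factor $r$ in the $L^2$ bound but not in the $L^\infty$ bound produced by a naive De Giorgi iteration. You would need to repair this, e.g.\ by exploiting that level sets with small measure contribute a small \emph{volume} error, so that the $Cr^d$ term becomes $C|\{w>k\}\cap B_\tau|$ in the Caccioppoli inequality (re-examining how the measure term enters \eqref{e:quminux0}/Proposition \ref{p:quminux}), or by an entirely different mechanism. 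As written, the crucial estimate $\sup_{B_{cr}}(h_r-u_x)^+\le Cr(r^{\delta_{\text{\tiny\sc A}}/2}\omega+1)$ is not justified, and the rest of the argument hinges on it.

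Also, once $u_x>0$ a.e.\ in $B_{cr}$ is obtained, note that Proposition \ref{p:strposlip} needs the sign on a ball centred at $x$ in physical coordinates while $\{u_x>0$ in $B_{cr}\}$ corresponds to $u>0$ on $F_x(B_{cr})$, an ellipsoid. You should take $c$ small enough so that a euclidean ball of radius $\lambda_{\text{\tiny\sc A}}^{-1}cr$ centred at $x$ fits inside $F_x(B_{cr})$ (as the paper implicitly does by setting $c=\lambda_{\text{\tiny\sc A}}^{-1}\tau_0^{-1/d}\eta/2$).
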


Roughly speaking, the condition \eqref{e:lipcase1a} says that the absolute value of the trace of $u_x$ to $\partial B_r$ is big. This will in fact ensure that $u_x$ has, in some smaller ball, the same sign than (the average of) $u_x$ on $\partial B_r$. %We start with the following Lemma.

\begin{lm}\label{l:firstlmlip}
Let $\gamma$ and $\tau$ be two positive constants. There exist $r_0, \eta \in (0,1)$ and $\kappa_1>0$ such that, if $x\in D$ and $r\leq r_0$ satisfy $B_{\lambda_{\text{\tiny\sc A}}r}(x)\subset D$,
\begin{equation}\label{e:firstlmlipa}
\gamma(1+\omega(u_x,r)) \leq \frac{1}{r}|b(u_x,r)|\qquad\text{and}\qquad \kappa_1\leq\omega(u_x,r),
\end{equation}
then there exist $\rho\in (\frac{\eta r}{2},\eta r)$ such that
\begin{equation}\label{e:firstlmlipb}
\tau(1+\rho^{\delta_{\text{\tiny\sc A}}/2}\omega(u_x,\rho))\leq \frac{1}{\rho}|b(u_x,\rho)|\qquad\text{and}\qquad b^+(u_x,\rho)\leq3|b(u_x,\rho)|.
\end{equation}
Moreover, $b(u_x,r)$ and $b(u_x,\rho)$ have the same sign.
\end{lm}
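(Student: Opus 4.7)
The plan is to compare $u_x$ with its harmonic extension $h_r$ on $B_r$ and to pick $\rho\in(\eta r/2,\eta r)$ via a pigeonhole argument so that the traces of $u_x$ and $h_r$ on $\partial B_\rho$ are quantitatively close. Combining the quasi-minimality estimate \eqref{e:estcontu1} with Poincar\'e's inequality applied to $u_x-h_r\in H^1_0(B_r)$ gives
\[
\int_{B_r}(u_x-h_r)^2 \leq Cr^2\int_{B_r}|\nabla(u_x-h_r)|^2 \leq Cr^{d+2}\bigl(r^{\delta_{\text{\tiny\sc A}}}\omega(u_x,r)^2+1\bigr),
\]
so Fubini on the annulus $B_{\eta r}\setminus B_{\eta r/2}$ produces some $\rho\in(\eta r/2,\eta r)$ with
\[
\aver{\partial B_\rho}|u_x-h_r| \leq Cr\,\eta^{-d/2}\bigl(r^{\delta_{\text{\tiny\sc A}}/2}\omega(u_x,r)+1\bigr).
\]

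Since $h_r$ is harmonic, the mean value property gives $b(h_r,\rho)=h_r(0)=b(u_x,r)$, so the display above controls $|b(u_x,\rho)-b(u_x,r)|$. Using the hypothesis $\omega(u_x,r)\geq\kappa_1$ to bound the additive $1$ by $\omega(u_x,r)/\kappa_1$, and comparing with $|b(u_x,r)|\geq\gamma r\omega(u_x,r)$, the error can be made $\leq\tfrac12|b(u_x,r)|$ provided $\kappa_1$ is large and $r_0$ small (both depending on $\eta$ and $\gamma$). This yields at once the same-sign property of $b(u_x,\rho)$ and $b(u_x,r)$, the lower bound $|b(u_x,\rho)|\geq\tfrac12|b(u_x,r)|$, and consequently $\rho^{-1}|b(u_x,\rho)|\geq\tfrac{\gamma}{2\eta}(1+\omega(u_x,r))$.

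To deduce the first inequality in \eqref{e:firstlmlipb}, I invoke the growth bound \eqref{e:estcontua} which gives $\omega(u_x,\rho)\leq C\omega(u_x,r)+C\log(2/\eta)$; then for $r_0$ small one has $1+\rho^{\delta_{\text{\tiny\sc A}}/2}\omega(u_x,\rho)\leq C_0(1+\omega(u_x,r))$ for an absolute constant $C_0$, and choosing $\eta\leq\gamma/(2\tau C_0)$ finishes this part. For the second inequality I split
\[
b^+(u_x,\rho) \leq \aver{\partial B_\rho}|h_r| + \aver{\partial B_\rho}|u_x-h_r|,
\]
and use the subharmonicity of $|\nabla h_r|^2$ together with the minimality of $h_r$ to obtain $\sup_{B_{r/2}}|\nabla h_r|\leq C\omega(u_x,r)$; hence $|h_r(\xi)-h_r(0)|\leq C\eta r\,\omega(u_x,r)$ on $\partial B_\rho$ (once $\eta\leq 1/2$), so $\aver{\partial B_\rho}|h_r|\leq|b(u_x,r)|+C\eta r\,\omega(u_x,r)$. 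Combined with the already controlled error $\aver{\partial B_\rho}|u_x-h_r|$ this gives $b^+(u_x,\rho)\leq\tfrac32|b(u_x,r)|\leq 3|b(u_x,\rho)|$ for the same choices of $\eta$, $\kappa_1$, $r_0$.

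The main obstacle is the competing roles of $\eta$: it must be chosen small so that $\rho^{-1}|b(u_x,\rho)|$ dominates $\tau(1+\rho^{\delta_{\text{\tiny\sc A}}/2}\omega(u_x,\rho))$, yet the pigeonhole error carries the unfavorable factor $\eta^{-d/2}$. The resolution lies in the order of quantifiers: first fix $\eta$ balancing $\gamma$ against $\tau$, then take $\kappa_1$ large enough so that the constant "$+1$" in the pigeonhole error is absorbed via $\omega(u_x,r)\geq\kappa_1$, and finally shrink $r_0$ to kill the residual powers of $r^{\delta_{\text{\tiny\sc A}}/2}$.
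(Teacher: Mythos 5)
Your proposal is correct and follows essentially the same route as the paper's proof: subharmonicity of $\nabla h_r$ to compare $h_r$ on $B_{\eta r}$ with $h_r(0)=b(u_x,r)$, the pigeonhole/coarea choice of $\rho$ in the annulus combined with Poincar\'e and the quasi-minimality estimate \eqref{e:estcontu1} to bound $\aver{\partial B_\rho}|u_x-h_r|$, the growth estimate \eqref{e:estcontua} for the first inequality, and the same quantifier order (fix $\eta$ from $\gamma,\tau$, then $\kappa_1$ large, then $r_0$ small). One minor arithmetic nit: with your stated error bound $\aver{\partial B_\rho}|u_x-h_r|\leq\tfrac12|b(u_x,r)|$ the chain only gives $b^+(u_x,\rho)\leq\tfrac74|b(u_x,r)|\leq\tfrac72|b(u_x,\rho)|$, which misses the target $3|b(u_x,\rho)|$; tightening the error to $\tfrac14|b(u_x,r)|$ (as the paper does, by taking $\kappa_1$ slightly larger and $r_0$ slightly smaller) closes the gap, and your shortcut $b(h_r,\rho)=h_r(0)=b(u_x,r)$ via the mean value property is a clean simplification over the paper's two-step triangle inequality.
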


\begin{proof}
We first prove the second inequality in \eqref{e:firstlmlipb}. Let us recall that $h_r=h_{x,r}$ denotes the harmonic extension of the trace of $u_x$ to $\partial B_r$. We want to estimate both $\aver{\partial B_{\rho}}|h_r|$ and $\aver{\partial B_{\rho}}|u_x-h_r|$ in terms of $|b(u_x,r)|$ for some $\rho\in (\frac{\eta r}{2},\eta r)$ defined soon (by \eqref{e:firstlmlip6}).
If $\eta\leq 1/2$, then by subharmonicity of $|\nabla h_r|$ in $B_r$ we have that for every $\xi \in B_{\eta r}$
\[ |\nabla h_r(\xi)|^2 \leq \aver{B_{r/2}(\xi)}|\nabla h_r|^2 \leq 2^d\aver{B_r}|\nabla h_r|^2 \leq 2^d\omega(u_x,r)^2. \]
Moreover, $b(u_x,r)=b(h_r,r)=h_r(0)$ since $h_r$ is harmonic and hence, choosing $\eta$ such that $\eta 2^{d/2}\leq \gamma/4$, we get
\begin{align}\label{e:firstlmlip1}
\nonumber|b(u_x,r)-h_r(\xi)|&=|h_r(0)-h_r(\xi)|\leq \eta r\|\nabla h_r\|_{L^\infty(B_{\eta r})} \leq \eta r2^{d/2}\omega(u_x,r) \\
&\leq \frac{\gamma r}{4}\omega(u_x,r) \leq \frac{1}{4}|b(u_x,r)|,
\end{align}
where in the last inequality we used the first estimate of \eqref{e:firstlmlipa}. This gives (because $\rho<\eta r$)
\begin{equation}\label{e:firstlmlip2}
\frac{3}{4}|b(u_x,r)| \leq \aver{\partial B_{\rho}}|h_r| \leq \frac{5}{4}|b(u_x,r)|.
\end{equation}
On the other hand, we now fix some $\rho=\rho_x \in (\frac{\eta r}{2},\eta r)$ such that
\begin{equation}\label{e:firstlmlip6}
\int_{\partial B_\rho}|u_x-h_r| \leq \frac{2}{\eta r}\int_{\eta r/2}^{\eta r} ds \int_{\partial B_s}|u_x-h_r|.
\end{equation}
By Cauchy-Schwarz's inequality, Poincar\'{e}'s inequality and \eqref{e:quminux0} applied to the test function $h_r$, it follows that we have
\begin{align*}
\int_{\partial B_\rho}|u_x-h_r| &\leq \frac{2}{\eta r} \int_{B_{\eta r}}|u_x-h_r| \leq C(\eta r)^{\frac{d}{2}-1}\left(\int_{B_{\eta r}}|u_x-h_r|^2\right)^{1/2} \\
&\leq C(\eta r)^{\frac{d}{2}-1}\left(\int_{B_r}|u_x-h_r|^2\right)^{1/2} 
\leq C(\eta r)^{\frac{d}{2}-1}r\left(\int_{B_r}|\nabla(u_x-h_r)|^2\right)^{1/2} \\
&\leq C(\eta r)^{\frac{d}{2}-1}r\left(r^{\delta_{\text{\tiny\sc A}}}\int_{B_r}|\nabla h_r|^2 + r^d\right)^{1/2} \leq C\eta^{\frac{d}{2}-1}r^{d}\big(r^{\delta_{\text{\tiny\sc A}}}\omega(u_x,r)^2+1\big)^{1/2} \\
&\leq C\eta^{\frac{d}{2}-1}r^{d}(r^{\delta_{\text{\tiny\sc A}}/2}\omega(u_x,r)+1)
\end{align*}
In view of the two hypothesis in \eqref{e:firstlmlipa} we then get 
\begin{align}\label{e:firstlmlip3}
\nonumber\aver{\partial B_{\rho}}|u_x-h_r| &\leq C\eta^{-\frac{d}{2}}r(r_0^{\delta_{\text{\tiny\sc A}}/2}\omega(u_x,r)+1) 
\leq C\eta^{-\frac{d}{2}}r\,\omega(u_x,r)\Big(r_0^{\delta_{\text{\tiny\sc A}}/2}+\frac{1}{\kappa_1}\Big) \\
&\leq C\eta^{-\frac{d}{2}}\gamma^{-1}|b(u_x,r)|\Big(r_0^{\delta_{\text{\tiny\sc A}}/2}+\frac{1}{\kappa_1}\Big)
\leq \frac{1}{4}|b(u_x,r)|,
\end{align}
where the last inequality holds if we choose $r_0$ small enough and $\kappa_1>0$ large enough (both depending on $\eta$) such that
\begin{equation}\label{e:firstlmlip4}
C\gamma^{-1}\Big(r_0^{\delta_{\text{\tiny\sc A}}/2}+\frac{1}{\kappa_1}\Big)\leq \frac{1}{4}\eta^{d/2}.
\end{equation}
Now, using \eqref{e:firstlmlip2} and \eqref{e:firstlmlip3} we have
\[ b^+(u_x,\rho) =\aver{\partial B_\rho}|u_x| \leq \aver{\partial B_\rho}|h_r| + \aver{\partial B_\rho}|u_x-h_r| \leq \frac{3}{2}|b(u_x,r)|, \]
and, using also that $h_r$ keeps the same sign on $\partial B_\rho$ by \eqref{e:firstlmlip1}, we have
\begin{equation}\label{e:firstlmlip5}
|b(u_x,\rho)| \geq \bigg|\aver{\partial B_\rho}h_r\bigg| - \bigg|\aver{\partial B_\rho}(u_x-h_r)\bigg| \geq \aver{\partial B_\rho}|h_r|-\aver{\partial B_\rho}|u_x-h_r| \geq \frac{1}{2}|b(u_x,r)|.
\end{equation}
This proves the second inequality in \eqref{e:firstlmlipa}. Moreover, \eqref{e:firstlmlip3} and \eqref{e:firstlmlip1} imply that
\begin{align*}
|b(u_x,\rho)-b(u_x,r)| &\leq \bigg| b(u_x,\rho)-\aver{\partial B_\rho}h_r\bigg| + \bigg|\aver{\partial B_\rho}h_r-b(u_x,r)\bigg| \\
&\leq \aver{\partial B_\rho}|u_x-h_r| + \aver{\partial B_\rho}|h_r-b(u_x,r)|\leq \frac{1}{2}|b(u_x,r)|,
\end{align*}
which shows that $b(u_x,r)$ and $b(u_x,\rho)$ have the same sign.

For the first estimate in \eqref{e:firstlmlipb}, by \eqref{e:firstlmlip5} and the first hypothesis in \eqref{e:firstlmlipa}, we have
\begin{equation*}
\frac{1}{\rho}|b(u_x,\rho)|\geq \frac{1}{2\rho}|b(u_x,r)| \geq \frac{1}{2\eta r}|b(u_x,r)| \geq \frac{\gamma}{2\eta}(1+\omega(u_x,r)),
\end{equation*}
which using \eqref{e:estcontua} gives (notice that we assumed that $B_{\lambda_{\text{\tiny\sc A}} r}(x)\subset D$)
\begin{align*}
1+\rho^{\delta_{\text{\tiny\sc A}}/2}\omega(u_x,\rho) &\leq 1+\omega(u_x,\rho) \leq 1+C\bigg( \omega(u_x,r) + \log\frac{r}{\rho} \bigg) \\
&\leq C\big(1+\omega(u_x,r) + |\log\eta|\big) 
\leq C\big(1+|\log\eta|\big)(1+\omega(u_x,r)) \\
&\leq C\big(1+|\log\eta|\big)\frac{2\eta}{\gamma\rho}|b(u_x,\rho)|.
\end{align*}
Finally, observe that with $\eta$ small enough (and also $r_0$ small enough and $\kappa_1$ large enough so that \eqref{e:firstlmlip4} still holds) we have
\[ \tau \, C\big(1+|\log\eta|\big)\frac{2\eta}{\gamma} \leq 1. \]
This completes the proof.
\end{proof}

We continue with a self-improvement lemma whose strategy is similar to the one followed in the previous lemma, the main difference being that we now consider $u_x$ with different points $x$.

\begin{lm}\label{l:seclmlip}
There exist constants $r_0\in(0,1)$ and $\tau_0\geq 1$ with the following property: if $x\in D$, $\tau\geq \tau_0$ and $\rho\leq r_0$ satisfy $B_{\lambda_{\text{\tiny\sc A}}\rho}(x)\subset D$,
\begin{equation}\label{e:seclmlipa}
\tau(1+\rho^{\delta_{\text{\tiny\sc A}}/2}\omega(u_x,\rho))\leq \frac{1}{\rho}|b(u_x,\rho)|\qquad\text{and}\qquad b^+(u_x,\rho)\leq3|b(u_x,\rho)|,
\end{equation}
then for every $y\in B_{\eps\rho}(x)$, where $\eps=\tau^{-1/d}$, there exists $\rho_1\in(\frac{\eps\rho}{2},\eps\rho)$ such that 
\begin{equation}\label{e:seclmlipb}
2\tau(1+\rho_1^{\delta_{\text{\tiny\sc A}}/2}\omega(u_y,\rho_1))\leq \frac{1}{\rho_1}|b(u_y,\rho_1)|\qquad\text{and}\qquad b^+(u_y,\rho_1)\leq3|b(u_y,\rho_1)|.
\end{equation}
Moreover, $b(u_x,\rho)$ and $b(u_y,\rho_1)$ have the same sign.
\end{lm}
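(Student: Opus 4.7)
The plan is to mirror the proof of Lemma \ref{l:firstlmlip}, now comparing $u_y$ with a harmonic approximation of $u_x$ transported by the affine map
\[
\phi(\xi):=\tilde y+M\xi,\quad \tilde y:=A_x^{-1/2}(y-x),\quad M:=A_x^{-1/2}A_y^{1/2},
\]
so that $u_y=u_x\circ\phi$, $|\tilde y|\le\lambda_{\text{\tiny\sc A}}\eps\rho$ and $|M-\mathrm{Id}|\le C(\eps\rho)^{\delta_{\text{\tiny\sc A}}/2}$ by the H\"{o}lder continuity of $A^{1/2}$. Let $h:=h_{x,\rho}$ denote the harmonic extension of $u_x|_{\partial B_\rho}$, so $h(0)=b(u_x,\rho)$ and $\sup_{B_{\rho/2}}|\nabla h|\le C\omega(u_x,\rho)$ by subharmonicity of $|\nabla h|^2$ and the Dirichlet minimality of $h$. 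The first step is to show that $h\circ\phi$ is close to $b(u_x,\rho)$ on $B_{\eps\rho}$: since $\phi(\xi)\in B_{2\lambda_{\text{\tiny\sc A}}\eps\rho}\subset B_{\rho/2}$ for $\xi\in B_{\eps\rho}$ (provided $\tau_0$ is large), the gradient bound yields $|h(\phi(\xi))-b(u_x,\rho)|\le C\eps\rho\,\omega(u_x,\rho)$; combining both parts of the hypothesis with the a priori control $\rho\,\omega(u_x,\rho)\le C$ coming from Lemma \ref{l:estcontu} and the boundedness of $u$, and taking $\tau_0$ large and $r_0$ small, one secures $|h\circ\phi-b(u_x,\rho)|\le\tfrac14|b(u_x,\rho)|$ on $B_{\eps\rho}$.

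Next, I will obtain an $L^1$-comparison between $u_y$ and $h\circ\phi$ on a carefully chosen sphere. Proposition \ref{p:quminux} applied to $u_x$ with test function $h$ gives $\int_{B_\rho}|\nabla(u_x-h)|^2\le C\rho^{\delta_{\text{\tiny\sc A}}}\int_{B_\rho}|\nabla u_x|^2+C\rho^d$, whence by Poincar\'e $\|u_x-h\|_{L^2(B_\rho)}\le C\rho^{d/2+1}(\rho^{\delta_{\text{\tiny\sc A}}/2}\omega(u_x,\rho)+1)$. Choosing $\rho_1\in(\eps\rho/2,\eps\rho)$ by the Fubini-on-radii argument of \eqref{e:firstlmlip6}, changing variables $\eta=\phi(\xi)$ (with $|\det M|$ bounded above and below), and applying Cauchy--Schwarz yield
\[
\aver{\partial B_{\rho_1}}|u_y-h\circ\phi|\le C\tau^{-1/2}|b(u_x,\rho)|.
\]
The triangle inequality together with the previous step then gives $|b(u_y,\rho_1)-b(u_x,\rho)|\le\tfrac12|b(u_x,\rho)|$, so $b(u_y,\rho_1)$ has the same sign as $b(u_x,\rho)$ and $|b(u_y,\rho_1)|\ge\tfrac12|b(u_x,\rho)|$; the second inequality of \eqref{e:seclmlipb} follows from $b^+(u_y,\rho_1)\le\aver{\partial B_{\rho_1}}|h\circ\phi|+\aver{\partial B_{\rho_1}}|u_y-h\circ\phi|\le\tfrac32|b(u_x,\rho)|\le3|b(u_y,\rho_1)|$.

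For the first inequality of \eqref{e:seclmlipb}, the above lower bound on $|b(u_y,\rho_1)|$ together with $\rho_1\le\eps\rho=\tau^{-1/d}\rho$ gives
\[
\frac{|b(u_y,\rho_1)|}{\rho_1}\ge\frac{|b(u_x,\rho)|}{2\eps\rho}\ge\frac{\tau^{1+1/d}}{2}\bigl(1+\rho^{\delta_{\text{\tiny\sc A}}/2}\omega(u_x,\rho)\bigr);
\]
Lemma \ref{l:estcontu} and the ellipticity of $A$ yield $\omega(u_y,\rho_1)\le C\omega(u_x,\rho)+C\log\tau$, hence $1+\rho_1^{\delta_{\text{\tiny\sc A}}/2}\omega(u_y,\rho_1)\le C(1+\log\tau)(1+\rho^{\delta_{\text{\tiny\sc A}}/2}\omega(u_x,\rho))$, and the required $2\tau(1+\rho_1^{\delta_{\text{\tiny\sc A}}/2}\omega(u_y,\rho_1))\le|b(u_y,\rho_1)|/\rho_1$ reduces to $4C(1+\log\tau)\le\tau^{1/d}$, which holds for every $\tau\ge\tau_0$ once $\tau_0$ is large enough. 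The hard part will be the first step: the hypothesis controls only $\rho^{\delta_{\text{\tiny\sc A}}/2}\omega(u_x,\rho)$ rather than $\omega(u_x,\rho)$ itself, so the pointwise error $|h(\eta)-h(0)|\le C|\eta|\omega$ is not automatically dominated by $|b(u_x,\rho)|$; resolving this will require combining both parts of the hypothesis with the uniform bound $\rho\,\omega(u_x,\rho)\le C$ (valid for $\rho\le r_0$) coming from the preceding continuity analysis, and a careful calibration of $\tau_0$ and $r_0$ so that all constants align.
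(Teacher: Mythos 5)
Your overall skeleton — comparing $u_y$ to $h_{x,\rho}$ transported by the affine change of coordinates, picking the radius $\rho_1$ by a Fubini-on-radii argument, and then using the triangle inequality to transfer the two inequalities from $(x,\rho)$ to $(y,\rho_1)$ — is exactly the route the paper takes; your map $\phi=\tilde y+M\xi$ is precisely the paper's $F=F_x^{-1}\circ F_y$, and the estimate $\aver{\partial B_{\rho_1}}|u_y-h\circ\phi|\le C\tau^{-1/2}|b(u_x,\rho)|$ matches the paper's \eqref{e:seclmlip6}.

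The gap is in the first step, and it is not fixable by the device you propose. You bound $\sup_{B_{\rho/2}}|\nabla h|\le C\,\omega(u_x,\rho)$ and deduce $|h\circ\phi-b(u_x,\rho)|\le C\eps\rho\,\omega(u_x,\rho)$, then try to absorb the right-hand side into $\tfrac14|b(u_x,\rho)|$. In Lemma~\ref{l:firstlmlip} this works because the hypothesis there controls $\omega(u_x,r)$ itself: $\gamma r\,\omega(u_x,r)\le|b(u_x,r)|$. In the present lemma the hypothesis only controls $\rho^{\delta_{\text{\tiny\sc A}}/2}\omega(u_x,\rho)$, so you only get $\rho\,\omega(u_x,\rho)\le\tau^{-1}\rho^{-\delta_{\text{\tiny\sc A}}/2}|b(u_x,\rho)|$, and the factor $\rho^{-\delta_{\text{\tiny\sc A}}/2}$ ruins the estimate as $\rho\to0$ for any fixed $\tau$. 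Your proposed rescue via $\rho\,\omega(u_x,\rho)\le C'$ gives $C\eps\rho\,\omega(u_x,\rho)\le CC'\tau^{-1/d}$, and then $\le\tfrac14|b(u_x,\rho)|$ requires $|b(u_x,\rho)|\gtrsim\tau^{-1/d}$; but the hypothesis only guarantees $|b(u_x,\rho)|\ge\tau\rho$, which tends to $0$ with $\rho$. No choice of $\tau_0$, $r_0$ makes this uniform in $\rho\le r_0$ (the lemma must then be applied iteratively at ever smaller radii, so it cannot impose a positive lower bound on $\rho$).

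The correct estimate — the one the paper uses — replaces the $\omega$-based gradient bound by the sup-norm bound for harmonic functions in terms of the boundary average:
\[
\|\nabla h_\rho\|_{L^\infty(B_{\bar\eps\rho})}\le\frac{C}{\rho}\|h_\rho\|_{L^\infty(B_{\rho/2})}\le\frac{C}{\rho}\aver{\partial B_\rho}|h_\rho|=\frac{C}{\rho}\,b^+(u_x,\rho),
\]
and then the second hypothesis $b^+(u_x,\rho)\le3|b(u_x,\rho)|$ immediately gives
\[
|b(u_x,\rho)-h_\rho(\xi)|\le\bar\eps\rho\cdot\frac{C}{\rho}\,b^+(u_x,\rho)\le C\eps\,|b(u_x,\rho)|\le\tfrac14|b(u_x,\rho)|
\]
for $\tau_0$ large, uniformly in $\rho$. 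This is precisely where the hypothesis $b^+\le3|b|$ earns its keep: it lets you avoid $\omega(u_x,\rho)$ altogether in the sup estimate for $h$. Your write-up invokes it only nominally; the argument as stated does not actually use it at the critical point and therefore does not close.
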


\begin{proof}
Firstly, if $\eps$ is small enough so that $\bar{\eps}:=2\lambda_{\text{\tiny\sc A}}^2\eps\leq 1/4$, then by standard estimates on harmonic functions (see \cite[Theorem 3.9]{gilbarg-trudinger-01}) $h_\rho=h_{x,\rho}$ satisfies
\begin{equation*}\label{e:seclmlip1}
\|\nabla h_\rho\|_{L^\infty(B_{\bar{\eps} \rho})} \leq \frac{C}{\rho}\|h_\rho\|_{L^\infty(B_{\rho/2})} \leq \frac{C}{\rho}\aver{\partial B_\rho}|h_\rho| = \frac{C}{\rho}b^+(u_x,\rho).
\end{equation*}
Using that $b(u_x,\rho)=b(h_\rho,\rho)=h_\rho(0)$ by harmonicity and the second hypothesis in \eqref{e:seclmlipa}, it follows that for every $\xi\in B_{\bar{\eps}\rho}$ we have
\begin{equation}\label{e:seclmlip2}
|b(u_x,\rho)-h_\rho(\xi)| \leq \bar{\eps}\rho\|\nabla h_\rho\|_{L^\infty(B_{\bar{\eps}\rho})} \leq \eps Cb^+(u_x,\rho) \leq \tau_0^{-\frac{1}{d}} C|b(u_x,\rho)|\leq \frac{1}{4}|b(u_x,\rho)|, 
\end{equation}
where the last inequality holds if $\tau_0$ is big enough. 
This implies that
\begin{equation}\label{e:seclmlip21}
\frac{3}{4}|b(u_x,\rho)| \leq |h_\rho(\xi)|\leq\frac{5}{4}|b(u_x,\rho)|\qquad\text{for every}\qquad \xi\in B_{\bar{\eps}\rho}.
\end{equation}
Moreover, by \eqref{e:quminux0} applied to $h_\rho$ (and since $\bar{\eps}\rho\leq \rho$ for $\tau_0$ large enough) we have
\begin{align}\label{e:seclmlip3}
\nonumber \int_{B_{\bar{\eps}\rho}}|u_x-h_\rho| &\leq C(\bar{\eps}\rho)^{\frac{d}{2}}\left(\int_{B_{\bar{\eps}\rho}}|u_x-h_\rho|^2\right)^{1/2} \leq C(\eps \rho)^{\frac{d}{2}}\left(\int_{B_\rho}|u_x-h_\rho|^2\right)^{1/2} \\
&\leq C(\eps \rho)^{\frac{d}{2}}\rho\left(\int_{B_\rho}|\nabla(u_x-h_\rho)|^2\right)^{1/2} \leq C(\eps\rho)^{\frac{d}{2}}\rho\left(\rho^{\delta_{\text{\tiny\sc A}}}\int_{B_\rho}|\nabla h_\rho|^2 + \rho^d\right)^{1/2} \\
\nonumber&\leq  C\eps^{\frac{d}{2}}\rho^{d+1}\big(\rho^{\delta_{\text{\tiny\sc A}}}\omega(u_x,\rho)^2+1\big)^{1/2} \leq C\eps^{\frac{d}{2}}\rho^{d+1}(\rho^{\delta_{\text{\tiny\sc A}}/2}\omega(u_x,\rho)+1).
\end{align}
We now fix some $y\in B_{\eps\rho}(x)$. Let $F:B_{\eps\rho}\subset \R^d\rightarrow B_{\bar{\eps}\rho}\subset\R^d$ be the function defined by $F(z)=F_x^{-1}\circ F_y(z)$. Then the coarea formula gives (and because $\partial F(B_s)=\partial \{|F^{-1}|>s\}$)
\begin{align}\label{e:seclmlip4}
\nonumber\lambda_{\text{\tiny\sc A}}^{-2}\int_{\eps\rho/2}^{\eps\rho}ds\int_{\partial F(B_s)}|u_x-h_\rho|\,d\mathcal{H}^{d-1} &\leq 
\int_{\eps\rho/2}^{\eps\rho}ds\int_{\partial F(B_s)}\frac{|u_x-h_\rho|}{|\nabla |F^{-1}||}\,d\mathcal{H}^{d-1} \\
&= \int_{\{\eps\rho/2\leq |F^{-1}|\leq\eps\rho\}}|u_x-h_\rho| \\
\nonumber&\leq \int_{F(B_{\eps\rho})}|u_x-h_\rho|
\leq \int_{B_{\bar{\eps} \rho}}|u_x-h_\rho|.
\end{align}
We now choose $\rho_1\in(\frac{\eps\rho}{2},\eps\rho)$ such that
\begin{equation*}\label{e:seclmlip5}
\int_{\partial F(B_{\rho_1})}|u_x-h_\rho|\,d\mathcal{H}^{d-1} \leq \frac{2}{\eps\rho} \int_{\eps\rho}^{\eps\rho/2}ds\int_{\partial F(B_{\rho_1})}|u_x-h_\rho|\,d\mathcal{H}^{d-1},
\end{equation*}
so that \eqref{e:seclmlip3}, \eqref{e:seclmlip4} and the first hypothesis in \eqref{e:seclmlipa} imply 
\begin{align}\label{e:seclmlip6}
\nonumber\aver{\partial F(B_{\rho_1})}|u_x-h_\rho|\,d\mathcal{H}^{d-1} &\leq C\eps^{-\frac{d}{2}}\rho(\rho^{\delta_{\text{\tiny\sc A}}/2}\omega(u_x,\rho)+1) \leq C\eps^{-\frac{d}{2}}\tau^{-1}|b(u_x,\rho)| \\
&\leq C\tau_0^{-1/2} |b(u_x,\rho)| \leq \frac{1}{4}\lambda_{\text{\tiny\sc A}}^{-4(d-1)}|b(u_x,\rho)|,
\end{align}
where the last inequality holds for $\tau_0$ is large enough.
Moreover, because the functions $F$ and $F^{-1}$ are Lipschitz continuous with Lipschitz constants bounded by $\lambda_{\text{\tiny\sc A}}^2$, we have for every set $E\subset \R^d$ (see \cite[Proposition 3.5]{maggi-12})
\begin{equation}\label{e:seclmlip8}
\lambda_{\text{\tiny\sc A}}^{-2(d-1)}\mathcal{H}^{d-1}(E) \leq F_{\#}\mathcal{H}^{d-1}(E) \leq \lambda_{\text{\tiny\sc A}}^{2(d-1)}\mathcal{H}^{d-1}(E),
\end{equation}
where $F_{\#}\mathcal{H}^{d-1}$ stands for the pushforward measure of $\mathcal{H}^{d-1}$ along $F$.
Therefore, by \eqref{e:seclmlip21} (and since $\partial F(B_{\rho_1})\subset B_{\bar{\eps}\rho}$), \eqref{e:seclmlip8} and \eqref{e:seclmlip6} we have
\begin{align}\label{e:seclmlip9}
\nonumber b^+(u_y,\rho_1) &=\aver{\partial B_{\rho_1}}|u_y|\,d\mathcal{H}^{d-1} = \frac{1}{\mathcal{H}^{d-1}(\partial B_{\rho_1})}\int_{\partial F(B_{\rho_1})}|u_x|\,dF_{\#}\mathcal{H}^{d-1} \\
\nonumber&\leq \frac{1}{\mathcal{H}^{d-1}(\partial B_{\rho_1})}\left(\int_{\partial F(B_{\rho_1})}|h_\rho|\,dF_{\#}\mathcal{H}^{d-1} + \int_{\partial F(B_{\rho_1})}|u_x-h_\rho|\,dF_{\#}\mathcal{H}^{d-1}\right) \\
&\leq \frac{5}{4}|b(u_x,\rho)| + \lambda_{\text{\tiny\sc A}}^{4(d-1)}\aver{\partial F(B_{\rho_1})}|u_x-h_\rho|\,d\mathcal{H}^{d-1} \\
\nonumber&\leq \frac{3}{2}|b(u_x,\rho)|.
\end{align}
%\begin{align}\label{e:seclmlip9}
%\nonumber|b(u_y,\rho_1)| &=\bigg|\aver{\partial B_{\rho_1}}u_y\,d\mathcal{H}^{d-1}\bigg| \geq \lambda_{\text{\tiny\sc A}}^{-2(d-1)} \bigg|\aver{\partial F(B_{\rho_1})}u_x\,dF_{\#}\mathcal{H}^{d-1}\bigg| \\
%\nonumber&\geq\lambda_{\text{\tiny\sc A}}^{-2(d-1)}\left(\bigg|\aver{\partial F(B_{\rho_1})}h_\rho\,dF_{\#}\mathcal{H}^{d-1}\bigg|-\bigg|\aver{\partial F(B_{\rho_1})}(u_x-h_\rho)\,dF_{\#}\mathcal{H}^{d-1}\bigg|\right) \\
%&\geq\lambda_{\text{\tiny\sc A}}^{-2(d-1)}\left(\aver{\partial F(B_{\rho_1})}|h_\rho|\,dF_{\#}\mathcal{H}^{d-1}-\aver{\partial F(B_{\rho_1})}|u_x-h_\rho|\,dF_{\#}\mathcal{H}^{d-1}\right) \\
%\nonumber&\geq\lambda_{\text{\tiny\sc A}}^{-4(d-1)}\aver{\partial F(B_{\rho_1})}|h_\rho|\,d\mathcal{H}^{d-1}-\aver{\partial F(B_{\rho_1})}|u_x-h_\rho|\,d\mathcal{H}^{d-1} \\
%\nonumber&\geq \frac{1}{2}\lambda_{\text{\tiny\sc A}}^{-4(d-1)}|b(u_x,\rho)|.
%\end{align}
On the other hand, we have by \eqref{e:seclmlip6}
\begin{align}\label{e:seclmlip11}
\nonumber\left|b(u_y,\rho_1) - \frac{1}{\mathcal{H}^{d-1}(\partial B_{\rho_1})}\int_{\partial F(B_{\rho_1})}h_\rho\,dF_{\#}\mathcal{H}^{d-1}\right| &\leq \frac{1}{\mathcal{H}^{d-1}(\partial B_{\rho_1})} \int_{\partial F(B_{\rho_1})}|u_x-h_\rho|\,dF_{\#}\mathcal{H}^{d-1} \\
&\leq \lambda_{\text{\tiny\sc A}}^{4(d-1)}\aver{\partial F(B_{\rho_1})}|u_x-h_\rho|\,d\mathcal{H}^{d-1} \\
\nonumber&\leq \frac{1}{4}|b(u_x,\rho)|.
\end{align}
Moreover, by \eqref{e:seclmlip2} and since $\partial F(B_{\rho_1}) \subset B_{\bar{\eps}\rho}$ we have
\begin{multline}\label{e:seclmlip12}
\left|\frac{1}{\mathcal{H}^{d-1}(\partial B_{\rho_1})}\int_{\partial F(B_{\rho_1})}h_\rho\,dF_{\#}\mathcal{H}^{d-1} - b(u_x,\rho)\right| \\ \leq \max\left\{ \max_{\xi\in\partial F(B_{\rho_1})}h_\rho(\xi) - b(u_x,\rho), \ b(u_x,\rho)-\min_{\xi\in\partial F(B_{\rho_1})}h_\rho(\xi)\right\} 
\leq \frac{1}{4}|b(u_x,\rho)|.
\end{multline}
Therefore, using the triangle inequality, \eqref{e:seclmlip11} and \eqref{e:seclmlip12} we get
\begin{equation*}
|b(u_y,\rho_1)-b(u_x,\rho)| \leq \frac{1}{2}|b(u_x,\rho)|.
\end{equation*}
This proves that $b(u_x,\rho)$ and $b(u_y,\rho_1)$ have the same sign and also implies that 
\begin{equation}\label{e:seclmlip10}
|b(u_y,\rho_1)| \geq |b(u_x,\rho)| - |b(u_y,\rho_1)-b(u_x,\rho)|\geq \frac{1}{2}|b(u_x,\rho)|
\end{equation}
Finally, \eqref{e:seclmlip9} and \eqref{e:seclmlip10} gives
\begin{equation*}
b^+(u_y,\rho_1) \leq \frac{3}{2}|b(u_x,\rho)| \leq 3|b(u_y,\rho_1)|,
\end{equation*}
which is the second inequality in \eqref{e:seclmlipb}.

We now prove the first inequality in \eqref{e:seclmlipb}. By \eqref{e:seclmlip10} and the first hypothesis in \eqref{e:seclmlipa} we have
\begin{equation}\label{e:seclmlip13}
|b(u_y,\rho_1)| \geq \frac{1}{2}|b(u_x,\rho)| \geq \frac{\tau\rho}{2}(1+\rho^{\delta_{\text{\tiny\sc A}}/2}\omega(u_x,\rho)),
\end{equation}
We then apply Lemma \ref{l:estcontu} (notice that we have $B_{\lambda_{\text{\tiny\sc A}}\rho}(x)\subset D$ and $2\lambda_{\text{\tiny\sc A}}^2\rho_1\leq\rho$) and eventually choose $\tau_0$ bigger (depending only on $d$ and $\delta_{\text{\tiny\sc A}}$) to get
\begin{align*}
1+\rho_1^{\delta_{\text{\tiny\sc A}}/2}\omega(u_y,\rho_1) &\leq 1 +\rho_1^{\delta_{\text{\tiny\sc A}}/2}\lambda_{\text{\tiny\sc A}}^2\omega(u_x,2\lambda_{\text{\tiny\sc A}}^2\rho_1) \leq 1+\rho_1^{\delta_{\text{\tiny\sc A}}/2}C(\omega(u_x,\rho)+\log(2\bar{\eps}\,{}^{-1})) \\
&\leq C(1+\rho_1^{\delta_{\text{\tiny\sc A}}/2}\omega(u_x,\rho)+\rho_1^{\delta_{\text{\tiny\sc A}}/2}\log(2\bar{\eps}\,{}^{-1}))\\
&\leq C(1+\rho^{\delta_{\text{\tiny\sc A}}/2}\omega(u_x,\rho)+\tau_0^{-\delta_{\text{\tiny\sc A}}/2d}\log(\tau_0)) \\
&\leq C(1+\rho^{\delta_{\text{\tiny\sc A}}/2}\omega(u_x,\rho)).
\end{align*}
Therefore, with \eqref{e:seclmlip13} this gives
\begin{equation*}
\frac{1}{\rho_1}|b(u_y,\rho_1)| \geq \frac{1}{\eps\rho}\frac{\tau\rho}{2}(1+\rho^{\delta_{\text{\tiny\sc A}}/2}\omega(u_x,\rho)) 
\geq \frac{\tau^{1/d}}{2C}\tau(1+\rho_1^{\delta_{\text{\tiny\sc A}}/2}\omega(u_y,\rho_1)),
\end{equation*}
which, choosing $\tau_0$ big enough so that $\tau_0^{1/d}\geq 4C$, completes the proof.
\end{proof}

We are now in position to prove Proposition \ref{p:lipcase1} using the results from Lemmas \ref{l:firstlmlip}, \ref{l:seclmlip} and Proposition \ref{p:strposlip}.

\begin{proof}[Proof of Proposition \ref{p:lipcase1}]
Set $\eps=\tau_0^{-1/d}$ and $\bar{r}=\frac{\eps\eta}{2}r$ where $\eta$ and $\tau_0$ are the constants given by Lemmas \ref{l:firstlmlip} and \ref{l:seclmlip}.
Note that in view of the first hypothesis in \eqref{e:lipcase1a} we have $b(u_x,r)\neq 0$.
We will prove that if $b(u_x,r)>0$ (resp. if $b(u_x,r)<0$), then $u>0$ almost everywhere (resp. $u<0$ a.e.) in $B_{\bar{r}}(x)$ .  

Let $y \in B_{\bar{r}}(x)$ be fixed. We first apply Lemma \ref{l:firstlmlip}. Now, we apply once Lemma \ref{l:seclmlip} at $x$ (notice that we have $y \in B_{\eps\rho}(x)$) and then iteratively at the point $y$. It follows that there exists a sequence of raddi $\rho_i>0$ such that
\begin{equation}\label{e:firstcaselip1}
2^i\tau(1+\rho_i^{\delta_{\text{\tiny\sc A}}/2}\omega(u_y,\rho_i))\leq\frac{1}{\rho_i}b(u_y,\rho_i),\qquad i\geq 0,
\end{equation}
and that $b(u_y,\rho_i)$ has the same sign than $b(u_x,r)$ for every $i\geq 0$.
Assume that $b(u_x,r)>0$, the proof in the case $b(u_x,r)<0$ is identical. Let us denote by $h_i=h_{y,\rho_i}$ the harmonic extension of the trace of the function $u_y$ to $\partial B_{\rho_i}$.
With the same argument as in \eqref{e:seclmlip2} we get
\begin{equation*}
|b(u_y,\rho_i)-h_i(\xi)| \leq \frac{1}{4}|b(u_y,\rho_i)|\qquad\text{for every }\xi\in B_{\eps\rho_i}.
\end{equation*}
Since $b(u_y,\rho_i)>0$, this implies that for every $\xi\in B_{\eps\rho_i}\cap \{ u_y\leq 0\}$ we have
\begin{align*}
|u_y(\xi)-h_i(\xi)| &\geq |u_y(\xi)-b(u_y,\rho_i)|-|b(u_y,\rho_i)-h_i(\xi)| \\
&\geq |b(u_y,\rho_i)| - \frac{1}{4}|b(u_y,\rho_i)| = \frac{3}{4}|b(u_y,\rho_i)|.
\end{align*}
By the Chebyshev inequality, the Lebesgue measure of $B_{\eps\rho_i}\cap \{ u_y\leq 0\}$ is estimate as
\begin{equation}\label{e:firstcaselip2}
|B_{\eps\rho_i}\cap \{ u_y\leq 0\}| \leq \frac{4}{3|b(u_y,\rho_i)|}\int_{B_{\eps\rho_i}}|u_y-h_i|.
\end{equation}
On the other hand, by \eqref{e:seclmlip3} in this context we have
\begin{equation}\label{e:firstcaselip3}
\int_{B_{\eps\rho_i}}|u_y-h_i| \leq C\eps^{d/2}\rho_i^{d+1}(1+\rho_i^{\delta_{\text{\tiny\sc A}}/2}\omega(u_y,\rho_i)).
\end{equation}
Now, combining \eqref{e:firstcaselip2}, \eqref{e:firstcaselip3} and \eqref{e:firstcaselip1} we get
\[ \frac{|B_{\eps\rho_i}\cap \{ u_y\leq 0\}|}{|B_{\eps\rho_i}|} \leq (\eps\rho_i)^{-d}C\eps^{d/2}\rho_i^{d+1}(2^i\tau\rho_i)^{-1} \leq \eps^{d/2}C2^{-i}, \]
which implies that
\[ \frac{|F_y(B_{\eps\rho_i})\cap\{u\leq 0\}|}{|F_y(B_{\eps\rho_i})|} = \frac{|B_{\eps\rho_i}\cap \{ u_y\leq 0\}|}{|B_{\eps\rho_i}|}\xrightarrow[i\to+\infty]{} 0, \]
where $F_y(B_{\eps\rho_i})=y+\rho_iA_y^{\sfrac12}(B_\eps)$. This shows that the density of the set $\{u\leq 0\}$ at every point $y\in B_{\bar{r}}(x)$ is $0$ (see \cite[exercise 5.19]{maggi-12}), and hence that $u>0$ almost-everywhere in $B_{\bar{r}}(x)$. 
Now, we set $c=\lambda_{\text{\tiny\sc A}}^{-1}\tau_0^{-1/d}\eta/2$, where $\eta$ and $\tau_0$ are the constants given by Lemma \ref{l:firstlmlip} and \ref{l:seclmlip}.
Then \eqref{e:lipcase1b} and \eqref{e:lipcase1c} follow from Proposition \ref{p:strposlip} and the fact that $\omega(u,x,cr)\leq c^{-d/2}\omega(u,x,r)$.
This concludes the proof.
\end{proof}

\subsection{Lipschitz continuity}\label{sub:endlipcont}
In this subsection we prove Theorem \ref{t:main2}. At this stage, the main work is to deal with the case where $\omega(u_x,r)$ is big and the first condition in \eqref{e:lipcase1a} fails. In this case, we show in Proposition \ref{p:case2} below that the value of $\omega(u_x,r)$ decreases at some smaller scale. Notice that the extra hypothesis \eqref{e:case2a} is almost irrelevant in view of Proposition \ref{p:strposlip}. 

\begin{prop}\label{p:case2}
Let $K\subset D$ be a compact set. There exist positive constants $r_K, \gamma\in(0,1)$ and $\kappa_2>0$ with the following property: if $x\in K$ and $r\leq r_K$ satisfy
\begin{equation}\label{e:case2a}
u_x(\xi_0)=0\quad\text{for some}\quad \xi_0\in B_{{\scriptscriptstyle(2\lambda_{\scalebox{.5}{A}})^{-2}}r},
\end{equation}
\begin{equation}\label{e:case2b}
|b(u_x,r)|\leq \gamma r(1+\omega(u_x,r))\qquad\text{and}\qquad \kappa_2\leq\omega(u_x,r),
\end{equation}
then we have
\begin{equation}\label{e:case2c}
\omega(u_x,r/3) \leq \frac{1}{2}\omega(u_x,r).
\end{equation}
\end{prop}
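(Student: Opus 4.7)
\medskip
\noindent\emph{Proof proposal.} The plan is to compare $u_x$ with its harmonic extension $h_r = h_{x,r}$ in $B_r$, and to combine the quasi-minimality from Proposition \ref{p:quminux} with an Alt-Caffarelli-Friedman-type almost-monotonicity formula centered at the interior zero $\xi_0$. I begin with the triangle inequality
\[
\omega(u_x,r/3)^2 \leq 2\,\aver{B_{r/3}}|\nabla h_r|^2 + 2\,\aver{B_{r/3}}|\nabla(u_x - h_r)|^2,
\]
and observe that the second term is controlled by Proposition \ref{p:quminux} applied with the test function $h_r$: one gets $\int_{B_r}|\nabla(u_x-h_r)|^2 \leq Cr^{\delta_{\text{\tiny\sc A}}}\int_{B_r}|\nabla h_r|^2 + Cr^d$, which yields $\aver{B_{r/3}}|\nabla(u_x-h_r)|^2 \leq C(r^{\delta_{\text{\tiny\sc A}}}\omega(u_x,r)^2 + 1)$. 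For $r\leq r_K$ small and $\omega(u_x,r)\geq \kappa_2$ large, this is at most $\omega(u_x,r)^2/8$, and the problem is reduced to bounding $\aver{B_{r/3}}|\nabla h_r|^2$ by, say, $\omega(u_x,r)^2/8$ as well.

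For this, I would first argue that the positive and negative parts $u_x^\pm$ must both carry a substantial share of the total Dirichlet energy. Indeed, if $u_x$ were essentially of one sign in $B_r$, then the boundary average $b(u_x,r)$ would be of order $b^+(u_x,r)$, and one would control $b^+(u_x,r)$ from below by a multiple of $r\,\omega(u_x,r)$ (via harmonic extension estimates and $L^1$-comparison as in Lemma \ref{l:firstlmlip}). This would contradict the first assumption in \eqref{e:case2b} once $\gamma$ is chosen small enough. It is at this point that both hypotheses in \eqref{e:case2b} are used in tandem.

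Next, I would invoke the Matevosyan-Petrosyan almost-monotonicity formula from \cite{matevosyan-petrosyan-11} centered at $\xi_0$, which is a common zero of $u_x^+$ and $u_x^-$ by \eqref{e:case2a}. The formula asserts that the weighted product
\[
\Phi_{\xi_0}(\rho):=\frac{1}{\rho^4}\prod_{\pm}\int_{B_\rho(\xi_0)}\frac{|\nabla u_x^\pm|^2}{|\cdot-\xi_0|^{d-2}}
\]
is almost non-decreasing in $\rho$, so $\Phi_{\xi_0}(\rho)\leq (1+C\rho^\theta)\Phi_{\xi_0}(r)$ for $\rho\leq r$. Since by the previous step $u_x^+$ and $u_x^-$ contribute comparably to the Dirichlet energy, this product bound translates into an individual bound on each of the averages $\aver{B_{r/3}(\xi_0)}|\nabla u_x^\pm|^2$, and hence on $\aver{B_{r/3}}|\nabla h_r|^2$ via the $L^2$-closeness of $u_x$ to $h_r$ provided by Proposition \ref{p:quminux}. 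Tuning $\kappa_2$ and $r_K$ then yields the desired inequality \eqref{e:case2c}.

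The main obstacle I foresee is the transition in the third step from the ACF product bound to the actual decay of $\omega$: the almost-monotonicity alone is compatible with "two-plane" rigidity configurations in which $\omega(u_x,\cdot)$ does not decay. Ruling out this rigidity case quantitatively is where the volume penalty $\Lambda|\{|U|>0\}|$ hidden in the almost-minimality \eqref{e:main2b} must come into play (through the $Cr^d$ error in Proposition \ref{p:quminux}), together with the smallness of $b(u_x,r)$, to produce a strict factor of $1/2$. Carefully balancing these error terms against the largeness of $\omega(u_x,r)$ (via $\kappa_2$) and the smallness of $r$ (via $r_K$) is the technical heart of the argument.
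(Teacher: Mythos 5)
Your overall skeleton is right (same decomposition of $\omega(u_x,r/3)^2$ into the harmonic part and the quasi-minimality remainder, same control of the latter as in \eqref{e:estcontu1}, same appeal to the Matevosyan--Petrosyan almost-monotonicity formula), but the middle of your argument is conceptually wrong, and this is not a fixable detail.

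Your second step asserts that $u_x^+$ and $u_x^-$ ``both carry a substantial share of the total Dirichlet energy,'' i.e.\ $\omega(u_x^+,r)$ and $\omega(u_x^-,r)$ are comparable. Under the standing hypotheses this is false, and your own next step exposes the contradiction: if they were comparable, the bound $\Phi(u_x^+,u_x^-,r)\leq C_m$ from Corollary~\ref{c:monotony} would force $\omega(u_x^\pm,r)^4\leq \omega(u_x^+,r)^2\omega(u_x^-,r)^2\lesssim C_m$, hence $\omega(u_x,r)\lesssim C_m^{1/4}$, which is incompatible with $\omega(u_x,r)\geq\kappa_2$ once $\kappa_2$ is taken large. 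Moreover the reasoning offered for comparability (``otherwise $u_x$ is essentially one-signed and then $b^+\gtrsim r\,\omega$'') is a non sequitur: the failure of one-signedness does not imply balance, and the nondegeneracy $b^+\gtrsim r\,\omega$ is not available here.

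The paper's argument uses the monotonicity formula in the opposite, asymmetric way, and this is the idea you are missing. Assume WLOG $\omega(u_x^+,r)\leq\omega(u_x^-,r)$. Then $\omega(u_x^+,r)^4\leq \omega(u_x^+,r)^2\omega(u_x^-,r)^2\lesssim \Phi(u_x^+,u_x^-,r)\leq C_m$, so the \emph{smaller} half satisfies $\omega(u_x^+,r)\leq (C_dC_m)^{1/4}$ unconditionally --- no comparability is needed. One then writes $b^+(u_x,r)=2\aver{\partial B_r}u_x^+ - b(u_x,r)$ and controls $\aver{\partial B_r}u_x^+$ by: (i) an oscillation estimate \eqref{e:case21} moving from $\partial B_r$ to a small sphere around $\xi_0$, with error controlled by $\aver{B_r}|\nabla u_x^+|\lesssim C_m^{1/4}$; and (ii) a modulus-of-continuity estimate from Proposition~\ref{p:contu} using $u_x(\xi_0)=0$, giving $\|u_x^+\|_{L^\infty(\partial B_{\eta r}(\xi_0))}\leq Cr(1+\eta\,\omega(u_x,r))$. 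Together with the hypothesis $|b(u_x,r)|\leq\gamma r(1+\omega(u_x,r))$ and $\omega(u_x,r)\geq\kappa_2$, this yields $b^+(u_x,r)\leq C\big((\eta+\gamma)+(1+\eta^{1-d}C_m^{1/4})\kappa_2^{-1}\big)r\,\omega(u_x,r)$, which feeds into $\|\nabla h_r\|_{L^\infty(B_{r/3})}\lesssim r^{-1}b^+(u_x,r)$ and gives the factor $1/2$ after choosing $\eta,\gamma,r_K$ small and then $\kappa_2$ large. Also note that the paper centers $\Phi$ at the origin (the point $x$ after the change of variables), not at $\xi_0$; the role of $\xi_0$ is only in step (ii). Finally, the ``rigidity/two-plane'' concern at the end of your proposal is a red herring: once the asymmetric ACF bound is used, the volume term enters only through the routine $Cr^d$ error in the quasi-minimality and no rigidity dichotomy is required.
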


We will need the following almost-monotonicity formula for operators in divergence form. We refer to  \cite[Theorem III]{matevosyan-petrosyan-11} for a proof (see also \cite{alt-caffarelli-friedman-84} and \cite{caffarelli-jerison-kenig-04} for the case of the Laplacian). Let us set for $u_+,u_-\in H^1(B_1)$ and $r\in(0,1)$
\[ \Phi(u_+,u_-,r)=\left(\frac{1}{r^2}\int_{B_r}\frac{|\nabla u_+(\xi)|^2}{|\xi|^{d-2}}\,d\xi\right)\left(\frac{1}{r^2}\int_{B_r}\frac{|\nabla u_-(\xi)|^2}{|\xi|^{d-2}}\,d\xi\right). \]

\begin{prop}\label{p:monotony}
Let $B=(b_{ij})_{ij} : B_1\rightarrow \text{Sym}_d^+$ be a uniformly elliptic matrix-valued function with H\"{o}lder continuous coefficients, that is, for every $x,y\in B_1$ and $\xi\in\R^d$
\begin{equation*}
|b_{ij}(x)-b_{ij}(y)|\le c_{\scalebox{.5}{B}}|x-y|^{\delta_{\scalebox{.5}{B}}}\qquad\text{and}\qquad
\frac{1}{\lambda_{\scalebox{.5}{B}}^2}|\xi|^2\le \xi\cdot B_x\,\xi\le \lambda_{\scalebox{.5}{B}}^2|\xi|^2.
\end{equation*} 
Let $u_+,u_-$ be two non-negative and continuous functions in the unit ball $B_1$ such that
\[ \dive(B\nabla u_\pm) \geq -1\quad\text{in}\quad B_1\qquad\text{and}\qquad u_+u_-=0\quad\text{in}\quad B_1. \]
Then there exist $r_0>0$ and $C>0$, depending only on $d, c_{\scalebox{.5}{B}}, \delta_{\scalebox{.5}{B}}$ and $\lambda_{\scalebox{.5}{B}}$, such that for every $r\leq r_0$ we have
\[ \Phi(u_+,u_-,r) \leq C\big(1+\|u_++u_-\|_{L^2(B_1)}^2\big)^2. \]
\end{prop}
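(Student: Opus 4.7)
The plan is to adapt the classical Alt--Caffarelli--Friedman (ACF) monotonicity strategy to handle both the variable coefficients of $B$ and the inhomogeneous subsolution inequality $\dive(B\nabla u_\pm)\ge -1$. Setting
\begin{equation*}
J(r):=\log\Phi(u_+,u_-,r),
\end{equation*}
the goal is to establish an almost-monotonicity bound of the form $J'(r)\ge -\eta(r)$ for some $\eta\in L^1(0,r_0)$. Integrating from $r$ to $r_0$ and then bounding $\Phi(r_0)$ by $C(1+\|u_++u_-\|^2_{L^2(B_1)})^2$ via a Caccioppoli energy estimate and a Hardy-type inequality for the weight $|\xi|^{2-d}$ would then yield the claim.

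First I would freeze the coefficients at the origin by the change of variables $\xi=B(0)^{1/2}\eta$, which transforms $\dive(B\nabla\cdot)$ into an operator $\dive(\tilde B\nabla\cdot)$ with $\tilde B(0)=\text{Id}$ and the same Hölder exponent, up to multiplicative constants depending only on $\lambda_{\scalebox{.5}{B}}$. The transformed functions $\tilde u_\pm$ retain disjoint supports and still satisfy a subsolution inequality for $\tilde B$ (with right-hand side of bounded size), while $\Phi$ is distorted only by a bounded Jacobian factor. Hence it suffices to prove the bound for $\tilde B$, and any error coming from the deviation $|\tilde B(\xi)-\text{Id}|\le C|\xi|^{\delta_{\scalebox{.5}{B}}}$ is of acceptable order near the origin.

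The heart of the argument is the differentiation of $\Phi$. By the coarea formula,
\begin{equation*}
\frac{1}{r^2}\int_{B_r}\frac{|\nabla\tilde u_\pm|^2}{|\xi|^{d-2}}\,d\xi=\frac{1}{r^2}\int_0^r\frac{1}{s^{d-2}}\int_{\partial B_s}|\nabla\tilde u_\pm|^2\,d\mathcal{H}^{d-1}\,ds,
\end{equation*}
so $J'(r)$ reduces to a comparison between the boundary gradient integrals on $\partial B_r$ and the radially weighted bulk integrals on $B_r$. Testing the subsolution inequality against $\tilde u_\pm$ and integrating by parts on $B_r$ (in the frozen Euclidean metric), one controls the bulk term by boundary data modulo a lower-order $L^2$ error from the right-hand side. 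After scaling $\partial B_r$ to the unit sphere, the remaining inequality becomes an estimate on the Rayleigh quotients of $\tilde u_\pm|_{\partial B_r}$ in terms of the first Dirichlet eigenvalues $\lambda_\pm$ of the complementary subsets $\{\tilde u_\pm>0\}\cap\mathbb{S}^{d-1}$. The Friedland--Hayman inequality,
\begin{equation*}
\sqrt{\lambda_+(\lambda_++d-2)}+\sqrt{\lambda_-(\lambda_-+d-2)}\ge 2,
\end{equation*}
then gives $J'(r)\ge 0$ to leading order.

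The main obstacle is keeping track of the two perturbations simultaneously. The variable coefficients break the clean integration by parts underlying the classical ACF identity, producing $O(r^{\delta_{\scalebox{.5}{B}}})$ error terms; the inhomogeneity $-1$ produces additional lower-order terms whose absorption requires Cauchy--Schwarz against $\|u_++u_-\|_{L^2}$, and this is precisely what forces the $(1+\|u_++u_-\|_{L^2}^2)^2$ factor. Balancing these errors against the strict gap in the Friedland--Hayman inequality (the crucial point being that the inequality is non-degenerate except in the equality case of two opposite half-spheres) is the delicate technical step; an $L^\infty$ bound on $u_\pm$ via a De Giorgi iteration analogous to Lemma~\ref{l:degiorgi} makes the bookkeeping explicit. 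Once $\eta$ is shown to be integrable on $(0,r_0)$, integration of the differential inequality and the $L^2$-based bound on $\Phi(r_0)$ complete the proof.
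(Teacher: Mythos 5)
The paper does not prove this proposition: immediately before the statement it says ``We refer to \cite[Theorem III]{matevosyan-petrosyan-11} for a proof,'' and the cited works of Alt--Caffarelli--Friedman and Caffarelli--Jerison--Kenig are also invoked as background. Your proposal is therefore attempting something the paper deliberately delegates, so the relevant question is whether your sketch would succeed.

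The general scaffolding you describe (freezing coefficients at the origin, differentiating $\Phi$ via the coarea formula, controlling the Rayleigh quotient on $\partial B_r$ by the first spherical Dirichlet eigenvalue of the support, and invoking the Friedland--Hayman inequality) is indeed the right skeleton, and it is essentially what Matevosyan--Petrosyan do on top of the Caffarelli--Jerison--Kenig argument. However, the framing of the key step is not quite correct and hides the genuine difficulty. You set $J(r)=\log\Phi(u_+,u_-,r)$ and claim the goal is to establish $J'(r)\ge -\eta(r)$ with a \emph{fixed} function $\eta\in L^1(0,r_0)$. That cannot work as stated: when the right-hand side is relaxed from $0$ to $-1$, the error term picked up in the integration by parts in each factor $J_\pm(r)=r^{-2}\int_{B_r}|\nabla u_\pm|^2 |\xi|^{2-d}\,d\xi$ scales like $1/J_\pm(r)$, not like an $r$-dependent quantity alone. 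Since there is no a priori positive lower bound on $J_\pm(r)$ (either factor may vanish on an interval of radii), the putative $\eta$ would not be integrable; indeed $\log\Phi$ may not even be finite. This is precisely the obstacle that forces the Caffarelli--Jerison--Kenig proof, and its variable-coefficient adaptation, to run a \emph{dichotomy} rather than a single differential inequality: on scales where $J_\pm$ is below a fixed threshold, one bounds $\Phi$ directly by a Cauchy--Schwarz/energy estimate; on scales where both $J_\pm$ are above threshold, the error terms are relatively small compared to the Friedland--Hayman gain, and one obtains a discrete decay estimate for $\Phi(2^{-k})$. Your sketch acknowledges that ``balancing these errors\dots is the delicate technical step,'' but the claimed $L^1$-integrable $\eta$ is exactly the thing that does not exist, and without replacing it by the dichotomy the argument breaks at its central step. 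The invocation of a De Giorgi $L^\infty$ bound is also a side issue: the Caffarelli--Jerison--Kenig and Matevosyan--Petrosyan estimates close in $L^2$, which is what produces the $(1+\|u_++u_-\|_{L^2(B_1)}^2)^2$ on the right-hand side.
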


We now state this almost-monotonicity formula for the functions $u_x^\pm$.

\begin{coro}\label{c:monotony}
Let $\O\subset D$ be a quasi-open set and $K\subset D$ be a compact set. Let $A$ be a matrix-valued function satisfying \eqref{e:holderA} and \eqref{e:ellipA}. Let $f\in L^\infty(D)$. Assume that $u\in H^1_0(\O)$ is a continuous function solution of the equation
\begin{equation}\label{e:monotoni}
-\dive(A\nabla u) = f \quad\text{in}\quad\O.
\end{equation}
Then there exists $r_K>0$ and $C_m>0$, depending only on $d, c_{\text{\tiny\sc A}}, \delta_{\text{\tiny\sc A}}, \lambda_{\text{\tiny\sc A}}, \|f\|_{L^\infty}, |D|$ and $\text{dist}(K,D^c)$, such that for every $x\in K$ and every $r\leq r_K$ the function $u_x$ satisfies
\[ \Phi(u_x^+,u_x^-,r) \leq C_m. \]
\end{coro}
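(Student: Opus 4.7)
The plan is to reduce the corollary to Proposition~\ref{p:monotony} via the change of variables $\xi \mapsto F_x(\xi) = x + A_x^{\sfrac12}\xi$, which flattens the coefficient matrix at the centre point. A direct chain-rule computation shows that if $u\in H^1_0(\Omega)$ solves $-\dive(A\nabla u)=f$, then $u_x = u\circ F_x$ satisfies
$$-\dive(B\nabla u_x) = f\circ F_x\qquad\text{in}\qquad F_x^{-1}(\Omega),$$
with the transformed matrix $B(\xi) := A_x^{-\sfrac12}\,A(F_x(\xi))\,A_x^{-\sfrac12}$. Crucially $B(0)=\mathrm{Id}$, and the Hölder exponent, Hölder constant, and ellipticity bounds of $B$ are controlled by $\delta_{\text{\tiny\sc A}}$, $c_{\text{\tiny\sc A}}$ and $\lambda_{\text{\tiny\sc A}}$ alone, uniformly in $x\in K$.

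Next, I would fix a scale $R_0$ depending only on $\lambda_{\text{\tiny\sc A}}$ and $\mathrm{dist}(K,D^c)$, chosen so that $F_x(B_{R_0})\subset D$ for every $x\in K$, and rescale by setting $v(\xi):=u_x(R_0\xi)$ on $B_1$. The rescaled function is continuous (since $u$ is, by Proposition~\ref{p:contu}), satisfies $v^+v^-=0$ pointwise, and solves an equation of the same type with matrix $\tilde B(\xi):=B(R_0\xi)$ and right-hand side bounded by $R_0^2\|f\|_{L^\infty}$. To place $v^\pm$ in the setting of Proposition~\ref{p:monotony}, I need to verify the subsolution inequality $\dive(\tilde B\nabla v^\pm)\ge -R_0^2\|f\|_{L^\infty}$ weakly on $B_1$. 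This is the classical Stampacchia cutoff: test the rescaled equation against $\eta_\eps(\pm v)\varphi$ for $\varphi\in C_c^\infty(B_1)$, $\varphi\ge 0$, where $\eta_\eps$ is a smooth non-decreasing approximation of $\chi_{\{t>0\}}$; the term $\eta_\eps'(\pm v)\varphi\,\tilde B\nabla v\cdot\nabla v$ has a definite sign by ellipticity of $\tilde B$, and sending $\eps\to 0$ yields the claim.

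Applying Proposition~\ref{p:monotony} to $v^\pm$, after dividing by $\max\{1,R_0^2\|f\|_{L^\infty}\}$ to normalise the subsolution bound to $-1$ and then absorbing the corresponding factor into the final constant, gives
$$\Phi(v^+,v^-,\rho)\le C\bigl(1+\|v\|_{L^2(B_1)}^2\bigr)^2$$
for every $\rho\le\rho_0$, where $\rho_0$ and $C$ depend on the admissible parameters. A change of variables bounds $\|v\|_{L^2(B_1)}^2$ by $\det(A_x^{-\sfrac12})\,\|u\|_{L^\infty}^2\,|D|$, and $\|u\|_{L^\infty}$ is controlled via Lemma~\ref{l:degiorgi} in terms of $\|f\|_{L^\infty}$, $|D|$ and $\lambda_{\text{\tiny\sc A}}$. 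Finally, the scaling identity $\Phi(v^+,v^-,r/R_0) = R_0^{4}\,\Phi(u_x^+,u_x^-,r)$, obtained by substituting $y=R_0\xi$ in both factors of $\Phi$, transports the bound back to $u_x^\pm$ and yields the corollary with $r_K := R_0\rho_0$.

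The only step that is not entirely routine is the verification of the subsolution inequality for $v^\pm$, since $v^\pm$ themselves are not admissible test functions for the PDE. However, the truncation argument sketched above depends only on the symmetry and positive-definiteness of $\tilde B$, so it adapts the constant-coefficient case without any real difficulty; I would expect the book-keeping of dependencies of the final constant $C_m$ on $R_0$, $\lambda_{\text{\tiny\sc A}}$ and $\|f\|_{L^\infty}$ to be the most delicate cosmetic point of the proof.
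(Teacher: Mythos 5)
Your proposal is correct and follows essentially the same route as the paper: change variables via $F_x$ to flatten the coefficient matrix at the centre, establish the Kato-type subsolution inequality for the positive and negative parts by the Stampacchia/truncation argument, rescale to the unit ball at a scale depending only on $\lambda_{\text{\tiny\sc A}}$ and $\mathrm{dist}(K,D^c)$, apply Proposition~\ref{p:monotony}, and transport back using the homogeneity of $\Phi$. The only cosmetic differences are that the paper performs the truncation argument for $u^\pm$ in the original coordinates before changing variables and builds the $\|f\|_{L^\infty}^{-1}$ normalisation directly into the definition of $u_\pm$, whereas you do the change of variables first and normalise afterwards.
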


\begin{proof}
We first prove that we have, in the sense of distributions,
\begin{equation}\label{e:monotoni1}
\dive(A\nabla u^+)\geq f\ind_{\{u>0\}}\quad\text{in}\quad D\qquad\text{ and }\qquad \dive(A\nabla u^-)\geq f\ind_{\{u<0\}}\quad\text{in}\quad D.
\end{equation}
Let us define $p_n : \R \rightarrow \R^+$ for $n\in\N$ by
\[p_n(s)=0,\  \text{ for }\ s \leq 0;\qquad p_n(s) = ns,\  \text{ for }\  s \in [0,1/n]; \qquad p_n(s) =1,\  \text{ for }\  s \geq 1/n, \]
and set $q_n(s)=\int_0^s p_n(t)\,dt$.
Since $p_n$ is Lipschitz continuous, we have $p_n(u) \in H^1_0(\O)$ and $\nabla p_n(u) = p'_n(u)\nabla u$. Let $\varphi \in C^{\infty}_0(D)$ be such that $\varphi\ge 0$ in $D$. Multiplying the equation \eqref{e:monotoni} with $\varphi p_n(u)\in H^1_0(\O)$ we get
\begin{align*}
\int_D A\nabla q_n(u)\cdot\nabla\varphi =\int_D p_n(u) A\nabla u \cdot \nabla \varphi &\le \int_D\big(p_n(u) A\nabla u \cdot \nabla \varphi + \varphi p'_n(u) A\nabla u\cdot\nabla u) \\
&= \int_D A\nabla u\cdot\nabla(\varphi p_n(u)) =\int_D f \varphi p_n(u).
\end{align*}
Now, the inequality for $u^+$ in \eqref{e:monotoni1} follows by letting $n$ tend $+\infty$, because $p_n(u)$ converges almost-everywhere to $\ind_{\{u>0\}}$ and $\nabla q_n(u)$ converges in $L^2$ to $\nabla u^+$. The same proof holds for $u^-$.

Now, set $\rho=\lambda_{\text{\tiny\sc A}}^{-1}\text{dist}(K,D^c)$ and define for every $\xi\in B_1$ 
\[ u_\pm(\xi)=\rho^{-2}\|f\|_{L^\infty}^{-1}\,u_x^\pm(\rho \xi),\qquad \tilde{f}(\xi)=f\circ F_x(\rho \xi),\qquad B_\xi=A_x^{-\sfrac12}A_{F_x(\rho \xi)}A_x^{-\sfrac12}. \]
Then the functions $u_\pm$ satisfy
\[ \dive(B\nabla u_\pm)\geq \|f\|_{L^\infty}^{-1}\tilde{f}\ind_{\{u_\pm>0\}}\geq -1\quad\text{in}\quad B_1. \]
Therefore, by Proposition \ref{p:monotony} we have for every $r\leq r_K:=r_0\rho$
\begin{align*}
\Phi(u_x^+,u_x^-,r) &= \rho^4\|f\|_{L^\infty}^4\Phi(u_+,u_-,r/\rho) \leq \rho^4\|f\|_{L^\infty}^4C\big(1+\|u_++u_-\|_{L^2(B_1)}^2\big)^2 \\
&\leq \rho^4\|f\|_{L^\infty}^4C\Big(1+\lambda_{\text{\tiny\sc A}}^d \rho^{-d-4}\|f\|_{L^\infty}^{-2}\|u\|_{L^2(D)}^2\Big)^2  \\
&\leq \rho^4\|f\|_{L^\infty}^4C\Big(1+\lambda_{\text{\tiny\sc A}}^d\rho^{-d-4}C_d|D|^{1+4/d}\Big)^2 =: C_m.
\end{align*}
\end{proof}

\begin{proof}[Proof of Proposition \ref{p:case2}]
Let us denote as before $h_r=h_{x,r}$ the harmonic extension of the trace of $u_x$ to $\partial B_r$. Then we have
\begin{equation}\label{e:case24}
\omega(u_x,r/3)^2 = \aver{B_{r/3}}|\nabla u_x|^2 \leq 2\,\aver{B_{r/3}}|\nabla h_r|^2 + 2\,\aver{B_{r/3}}|\nabla (u_x-h_r)|^2.
\end{equation}
By the quasi-minimality property of $u_x$ we can estimate the second term in the right hand side of \eqref{e:case24} as we did in \eqref{e:estcontu1}, this gives
\begin{align}\label{e:case25}
\nonumber\aver{B_{r/3}}|\nabla (u_x-h_r)|^2 \leq 3^d\aver{B_r}|\nabla (u_x-h_r)|^2 &\leq Cr^{\delta_{\text{\tiny\sc A}}}\omega(u_x,r)^2 +C\\
&\leq C(r^{\delta_{\text{\tiny\sc A}}} + \kappa_2^{-2})\,\omega(u_x,r)^2,
\end{align}
where in the last inequality we have used the second hypothesis in \eqref{e:case2b}. On the other hand, estimates for harmonic functions give
\begin{equation}\label{e:case26}
\|\nabla h_r\|_{L^\infty(B_{r/3})} \leq \frac{C}{r}\|h_r\|_{L^\infty(B_{r/2})} \leq \frac{C}{r}\aver{\partial B_r}|h_r| = \frac{C}{r}b^+(u_x,r).
\end{equation}
We now want to estimate $b^+(u_x,r)$ in terms of $r\,\omega(u_x,r)$. Let us assume that $\omega(u_x^+,r)\leq\omega(u_x^-,r)$, the same proof holds if the opposite inequality is satisfied.
We first prove that for $\xi_0 \in B_{r/2}$ and $\eta< 1/2$ we have
\begin{equation}\label{e:case21}
\aver{\partial B_r}u_x^+ - \aver{\partial B_{\eta r}(\xi_0)}u_x^+ \leq C_d \eta^{1-d}r\,\aver{B_r}|\nabla u_x^+|.
\end{equation}
Notice that up to considering the function $\xi\mapsto u_x^+(r\xi)$ we can assume that $r=1$. Let us define a one to one function $F:B_1\setminus B_\eta \rightarrow B_1\setminus B_\eta(\xi_0)$ by
\begin{equation*}
F(\xi) = \xi + \frac{1-|\xi|}{1-\eta}\xi_0.
\end{equation*}
We set $v=u_x^+\circ F$. For every $\xi \in\partial B_1$ we have by the fundamental theorem of the calculus 
\begin{align*}
v(\xi)-v(\eta\xi) = \int_{\eta}^1\frac{d}{dt}v(t\xi)\,dt \leq \int_{\eta}^1|\nabla v(t\xi)|dt.
\end{align*} 
Note that $F$ is the identity on $\partial B_1$ and is simply a translation on $\partial B_\eta$. Therefore, averaging on $\xi\in\partial B_1$ (and since $|\nabla v|\leq C_d|\nabla u_x^+\circ F|$) we get
\begin{align*}
\aver{\partial B_1}u_x^+ - \aver{\partial B_\eta(\xi_0)}u_x^+ &= \aver{\partial B_1}v - \aver{\partial B_\eta(\xi_0)}v \leq \frac{1}{d\omega_d}\int_{\eta}^{1}dt\int_{\partial B_1}|\nabla v(t\xi)|\,d\mathcal{H}^{d-1}(\xi) \\
&\leq \frac{\eta^{1-d}}{d\omega_d}\int_{B_1\setminus B_\eta}|\nabla v| \leq C_d\eta^{1-d} \int_{B_1\setminus B_\eta}|\nabla u_x^+\circ F| \\
&\leq C_d\eta^{1-d} \int_{B_1\setminus B_\eta(\xi_0)}|\nabla u_x^+||\det\nabla F^{-1}| \leq C_d\eta^{1-d} \aver{B_1}|\nabla u_x^+|,
\end{align*}
which proves \eqref{e:case21}.
Now, let $\xi_0\in B_{{(2\scriptscriptstyle\lambda_{\text{\tiny\sc A}})^{-2}}r}$ be such that $u_x(\xi_0)=0$ as in \eqref{e:case2a}.
By Proposition \ref{p:contu} we have for every $\xi \in B_{\eta r}(\xi_0)$ (and because $F_x(\xi_0),F_x(\xi)\in B_{{(2\scriptscriptstyle\lambda_{\text{\tiny\sc A}})^{-1}}r}(x)$ if $\eta$ is small enough)
\begin{align}\label{e:case22}
\nonumber u_x^+(\xi)&\leq|u_x(\xi)| = |u_x(\xi_0)-u_x(\xi)|=|u(F_x(\xi_0))-u_x(F_x(\xi))| \\
\nonumber &\leq C|F_x(\xi_0)-F_x(\xi)|\bigg(1+\omega(u,x,\lambda_{\text{\tiny\sc A}}^{-1}r) + \log\frac{\lambda_{\text{\tiny\sc A}}^{-1}r}{|F_x(\xi_0)-F_x(\xi)|}\bigg) \\
&\leq C|\xi_0-\xi|\bigg(1+\omega(u,x,\lambda_{\text{\tiny\sc A}}^{-1}r) + \log\frac{r}{|\xi_0-\xi|}\bigg) \\
\nonumber &\leq C r(1+\eta\,\omega(u_x,r)),
\end{align}
where the last inequality holds for $\eta$ small enough and since we have
\begin{align*}
\omega(u,x,\lambda_{\text{\tiny\sc A}}^{-1}r)^2 =\aver{B_{{\scriptscriptstyle\lambda_{\text{\tiny\sc A}}^{-1}}r}(x)}|\nabla u|^2 &\leq \lambda_{\text{\tiny\sc A}}^2\aver{B_{{\scriptscriptstyle\lambda_{\text{\tiny\sc A}}^{-1}}r}(x)}A_x\nabla u\cdot\nabla u  \\
&=\lambda_{\text{\tiny\sc A}}^2\aver{F_x^{-1}(B_{{\scriptscriptstyle\lambda_{\text{\tiny\sc A}}^{-1}}r}(x))}|\nabla u_x|^2 \leq \lambda_{\text{\tiny\sc A}}^{2(d+1)}\omega(u_x,r)^2.
\end{align*}
Moreover, recall that we assumed that $\omega(u_x^+,r)\leq\omega(u_x^-,r)$. Using the monotonicity formula in Corollary \ref{c:monotony} we get
\begin{equation*}
\omega(u_x^+,r)^4\leq\omega(u_x^+,r)^2\omega(u_x^-,r)^2 \leq C_d\Phi(u_x^+,u_x^-,r) \leq C_dC_m,
\end{equation*}
which implies by Cauchy-Schwarz's inequality
\begin{equation}\label{e:case23}
\aver{B_r}|\nabla u_x^+| \leq \bigg(\aver{B_r}|\nabla u_x^+|^2\bigg)^{1/2} = \omega(u_x^+,r) \leq (C_dC_m)^{1/4}.
\end{equation}
Therefore, combining \eqref{e:case22}, \eqref{e:case21}, \eqref{e:case23} and using the first hypothesis in \eqref{e:case2b} we have (and also since $u_x^-=u_x^+-u_x$)
\begin{align}\label{e:case27}
\nonumber b^+(u_x,r) &=\aver{\partial B_r}|u_x| = 2\,\aver{\partial B_r}u_x^+ - \aver{\partial B_r}u_x \\
\nonumber&\leq 2\|u_x^+\|_{L^\infty(\partial B_{\eta r}(\xi_0))} + 2\bigg(\aver{\partial B_r}u_x^+ - \aver{\partial B_{\eta r}(\xi_0)}u_x^+\bigg) + |b(u_x,r)| \\
&\leq C\left((\eta+\gamma)\,\omega(u_x,r) +1+\eta^{1-d}C_m^{1/4} \right)r \\
\nonumber&\leq C\left((\eta+\gamma)+\big(1+\eta^{1-d}C_m^{1/4}\big)\kappa_2^{-1} \right)r\omega(u_x,r),
\end{align}
where in the last inequality we used the second hypothesis in \eqref{e:case2b}.
We now return to \eqref{e:case24}. With \eqref{e:case25}, \eqref{e:case26} and \eqref{e:case27} we get
\begin{align*}
\omega(u_x,r/3)^2 &\leq \frac{C}{r^2}b^+(u_x,r)^2+C(r^{\delta_{\text{\tiny\sc A}}}+\kappa_2^{-2})\,\omega(u_x,r)^2 \\
&\leq C\left((\eta+\gamma)^2+r^{\delta_{\text{\tiny\sc A}}}+\big(1+\eta^{1-d}C_m^{1/4}\big)^2\kappa_2^{-2} \right)\omega(u_x,r)^2.
\end{align*}
Therefore, choosing first $\eta, \gamma$ and $r_K$ small enough and then $\kappa_2$ big enough (depending on $\eta$) we obtain \eqref{e:case2c}, which concludes the proof.
\end{proof}

We are now in position to prove Theorem \ref{t:main2} using an iterative argument and Propositions \ref{p:strposlip}, \ref{p:lipcase1}, \ref{p:case2}.

\begin{proof}[Proof of Theorem \ref{t:main2}]
Recall that we denote by $u$ any coordinate function of the vector $U$ and that we have to prove that $u$ is locally Lipschitz continuous in $D$. Let $K\subset D$ be a compact set and let $x\in K$. Let $r\leq r_K$, where $r_K$ is smaller than the constants given by Propositions \ref{p:strposlip}, \ref{p:lipcase1} and \ref{p:case2}. Set $\kappa=\max\{\kappa_1,\kappa_2\}$ where $\kappa_1,\kappa_2$ are the constants given by Propositions \ref{p:lipcase1} and \ref{p:case2}. We consider the following four cases:

\noindent{\bf Case 1:}
\begin{equation}\label{e:case1}
\text{either}\qquad u_x>0\quad\text{in}\quad B_{{(2\scriptscriptstyle\lambda_{\text{\tiny\sc A}})^{-2}r}}\qquad\text{ or }\qquad u_x<0\quad\text{in}\quad B_{{(2\scriptscriptstyle\lambda_{\text{\tiny\sc A}})^{-2}r}}
\end{equation}
\noindent{\bf Case 2:}
\begin{equation}\label{e:case2}
\gamma r(1+\omega(u_x,r)) \leq |b(u_x,r)|\qquad\text{and}\qquad \kappa\leq\omega(u_x,r),
\end{equation}
\noindent{\bf Case 3:}
\begin{equation}\label{e:case3}
|b(u_x,r)|\leq \gamma r(1+\omega(u_x,r))\qquad\text{and}\qquad \kappa\leq\omega(u_x,r),
\end{equation}
\noindent{\bf Case 4:}
\begin{equation}\label{e:case4}
\omega(u_x,r)\leq\kappa.
\end{equation}
For $k\geq 0$ we set $r_k=3^{-k}r$. 
We denote by $k_0$, if it exists, the smallest integer $k\geq 0$ such that the pair $(x,r_k)$ satisfies either \eqref{e:case1} or \eqref{e:case2}, and we set $k_0=+\infty$ otherwise. If $k_0>0$, then for every $k<k_0$ we have that: if $(x,r_k)$ satisfies \eqref{e:case3} then by Proposition \ref{p:case2} we have (notice that \eqref{e:case2a} holds since $u$ is continuous and that \eqref{e:case1} is not satisfied)
\begin{equation*}
\omega(u_x,r_{k+1})\leq\frac{1}{2}\omega(u_x,r_k),
\end{equation*}
while if $(x,r_k)$ satisfies \eqref{e:case4}, then we have
\begin{equation*}
\omega(u_x,r_{k+1}) \leq 3^{d/2}\omega(u_x,r_k)\leq 3^{d/2}\kappa.
\end{equation*}
Therefore, with an induction we get that for every $0\leq k\leq k_0$
\begin{equation}\label{e:main11}
\omega(u_x,r_k) \leq \max\{2^{-k}\omega(u_x,r),3^{d/2}\kappa \}.
\end{equation}
Assume that $k_0=+\infty$. If $x$ is a Lebesgue point for $\nabla u$, then $0$ is a Lebesgue point of $u_x$ and it follows from \eqref{e:main11} that
\begin{equation*}
|\nabla u(x)|\leq\lambda_{\text{\tiny\sc A}}|\nabla u_x(0)| =\lambda_{\text{\tiny\sc A}} \lim_{k\to+\infty}\omega(u_x,r_k) \leq \lambda_{\text{\tiny\sc A}} 3^{d/2} \kappa.
\end{equation*}
Assume now that $k_0<+\infty$. Then, by definition of $k_0$, the pair $(x,r_{k_0})$ satisfies either \eqref{e:case1} or \eqref{e:case2}. If \eqref{e:case1} holds, then Proposition \ref{p:strposlip} infers that $u$ is $C^{1,\beta}$ near $x$ and that we have (using also \eqref{e:main11})
\begin{align*}
|\nabla u(x)| &\leq C_K(1+\omega(u,x,(2\lambda_{\text{\tiny\sc A}})^{-2}r_{k_0})) \leq C_K(1+\omega(u,x,\lambda_{\text{\tiny\sc A}}^{-1}r_{k_0}))\\
&\leq C_K(1+\omega(u_x,r_{k_0})) \leq C_K(1+\max\{2^{-k_0}\omega(u_x,r),3^{d/2}\kappa \}) \\
&\leq C_K(1+\omega(u_x,r)).
\end{align*}
Moreover, by Proposition \ref{p:lipcase1} the same estimate holds if the pair $(x,r_{k_0})$ satisfies \eqref{e:case2}. Therefore, in all cases it follows that for almost every point $x\in K$ and every $r\leq r_K$ we have
\begin{equation}\label{e:main12}
|\nabla u(x)|\leq C_K(1+\omega(u_x,r)).
\end{equation}
Let now $x_0\in K$. Then, for almost every $x \in B_{r_K/2}(x_0)$, it follows by \eqref{e:main12} that 
\begin{align*}
|\nabla u(x)|&\leq C_K(1+\omega(u_x,r_K/2)) \leq C_K(1+\omega(u_{x_0},r_K)).
\end{align*}
With a compactness argument this proves that $u$ is locally Lipschitz continuous in $D$ and completes the proof.
\end{proof}

\section{Lipschitz continuity of the eigenfunctions}
This section is dedicated to the proof of Theorem \ref{t:main1}. Precisely, we prove that the vector $U=(u_1,\dots,u_k)$ of the first $k$ eigenfunctions on an optimal set for \eqref{e:shapeopt} is locally Lipschitz continuous in $D$. Using an idea taken from \cite{mazzoleni-terracini-velichkov-17}, we show that $U$ is a quasi-minimizer in the sense of \eqref{e:main2b}, and we then apply Theorem \ref{t:main2} to get the Lipschitz continuity of $U$.

\subsection{Preliminaries and existence of an optimal set}
We start with some properties about the spectrum of the operator in divergence form defined in \ref{e:defoperator}, and we then prove that the problem \eqref{e:shapeopt} admits a solution among the class of quasi-open sets. 
%Finally, we prove that the first $k$ eigenfunctions on any quasi-open set are bounded and that the first eigenfunction on an optimal set $\O^\ast$ keeps the same sign on every connected component of $\O^\ast$.
 
Let us define the weighted Lebesgue measure $m=b\,dx$, where $dx$ stands for the Lebesgue measure in $\R^d$. For a quasi-open set $\O\subset \R^d$, we define the spaces $L^2(\O;m)=L^2(\O)$ and $H^1_0(\O;m)=H^1_0(\O)$ endowed respectively with the following norms
\begin{equation*}
\|u\|_{L^2(\O;m)}=\left(\int_{\O}u^2\,dm\right)^{1/2}\qquad\text{and}\qquad \|u\|_{H^1(\O;m)}=\|u\|_{L^2(\O;m)} + \|\nabla u\|_{L^2(\O)}.
\end{equation*}
Moreover, if $\O=\R^d$ we will simply write $\|u\|_{L^2(m)}=\|u\|_{L^2(\R^d;m)}$.
We notice that, by the hypothesis \eqref{e:hypfctb} on the function $b$, the norms $\|\cdot\|_{L^2(\O;m)}$ and $\|\cdot\|_{L^2(\O)}$ are equivalent. We stress out that the choice of these norms is natural in view of \eqref{e:defoperator} and is motivated by the variational formulation of the sum of the first $k$ eigenfunctions (see \eqref{e:carsum} below).
Now, the Lax-Milgram theorem and the Poincar\'{e} inequality imply that for every $f \in L^2(\O,m)$ there exists a unique solution $u \in H^1_0(\O,m)$ to the problem
\[ -\dive(A\nabla u) = f\,b \text{ in } \O, \quad u \in H^1_0(\O,m). \]
The resolvent operator $R_{\O} : f \in L^2(\O;m)\rightarrow H^1_0(\O;m) \subset L^2(\O;m)$ defined as $R_{\O}(f)=u$ is a continuous, self-adjoint and positive operator. Since the Sobolev space $H^1_0(\O;m)$ is compactly embedded into $L^2(\O;m)$ (because we have assumed that $b\geq c_b>0$, see \eqref{e:hypfctb}), the resolvent $R_{\O}$ is in addition a compact operator. We say that a complex number $\lambda$ is an eigenvalue of the operator \eqref{e:defoperator} in $\O$ if there exists a non-trivial eigenfunction $u\in H^1_0(\O;m)$ solution of the equation
\[ -\dive(A\nabla u) = \lambda u\,b \text{ in } \O, \quad u\in H^1_0(\O;m).\]
The above properties of the resolvent ensure that the spectrum of the operator \eqref{e:defoperator} in $\O$ is given by an increasing sequence of eigenvalues which are strictly positive real numbers, non-necessarily distinct, and which we denote by
\[ 0< \lambda_1(\O) \leq \lambda_2(\O) \leq \cdots \leq \lambda_k(\O) \leq \cdots \]
The eigenvalues $\lambda_k(\O)$ are variationnaly characterized by the following min-max formula
\[ \lambda_k(\O) = \min_{\substack{V \text{ subspace of} \\ \text{dimension k of } H^1_0(\O,m)}} \ \max_{v\in V \backslash\{0\}} \ \frac{\int_{\O}A\nabla v\cdot\nabla v\,dx}{\int_{\O}v^2\,dm}.\]
Moreover, we denote by $u_k$ the normalized (with respect to the norm $\|\cdot\|_{L^2(\O;m)}$) eigenfunctions corresponding to the eigenvalues $\lambda_k(\O)$ and note that the family $(u_k)_k$ form an orthonormal system in $L^2(\O;m)$, that is
\begin{equation*} \int_{\O}u_iu_j\,dm=\delta_{ij}:=\left\{
\begin{aligned}
1 &\quad\text{if} &i=j, \\
0 &\quad\text{if} &i\neq j. \\
\end{aligned}\right. 
\end{equation*}
As a consequence, we have the following variational formulation for the sum of the first $k$ eigenvalues on a quasi-open set $\O$
\begin{equation}\label{e:carsum}
\sum_{i=1}^k\lambda_i(\O)=\min\Big\{ \int_{\O}A\nabla V\cdot\nabla V\,dx \ : \ V=(v_1,\dots,v_k)\in H^1_0(\O,\R^k), \ \int_{\O}v_iv_j\,dm=\delta_{ij} \Big\},
\end{equation}
for which the minimum is attained for the vector $U=(u_1,\dots,u_k)$. We now deduce from this characterization that the minimum in \eqref{e:shapeopt} is reached.

\begin{prop}[Existence]
The shape optimization problem \eqref{e:shapeopt} has a solution.
\end{prop}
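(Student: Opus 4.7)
The plan is to use the direct method of the calculus of variations through the variational formulation \eqref{e:carsum} of the sum of eigenvalues. Let $(\O_n)$ be a minimizing sequence of quasi-open subsets of $D$ for the functional $\O\mapsto\sum_{i=1}^k\lambda_i(\O)+\Lambda|\O|$, and let $U_n=(u_1^n,\dots,u_k^n)$ be a minimizer of the right-hand side of \eqref{e:carsum} on $\O_n$, so that $\int_{\O_n}A\nabla U_n\cdot\nabla U_n=\sum_{i=1}^k\lambda_i(\O_n)$ and the components $u_i^n$ form an orthonormal system in $L^2(\O_n;m)$. Extending the $u_i^n$ by zero outside $\O_n$, the bound on the Dirichlet energy together with the ellipticity \eqref{e:ellipA} and the normalization $\|u_i^n\|_{L^2(m)}=1$ (combined with \eqref{e:hypfctb}) give a uniform $H^1(\R^d)$ bound for each coordinate.

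Since $D$ is bounded, the Rellich--Kondrachov theorem lets me extract a subsequence (not relabeled) so that $U_n\rightharpoonup U=(u_1,\dots,u_k)$ weakly in $H^1(\R^d,\R^k)$ and $U_n\to U$ strongly in $L^2(D,\R^k)$. Strong $L^2$ convergence transfers the orthonormality relations $\int u_i^n u_j^n\,dm=\delta_{ij}$ to the limit, so in particular the $u_i$ are linearly independent and non-zero. I then set $\O^\ast:=\{|U|>0\}\subset D$, which is a quasi-open set as a finite union of the quasi-open sets $\{u_i\neq 0\}$, and $u_i\in H^1_0(\O^\ast)$ by construction.

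To conclude, I combine three semicontinuity facts. First, weak $H^1$ convergence together with the non-negativity of the quadratic form $\xi\mapsto A_x\xi\cdot\xi$ yields
\begin{equation*}
\int_{\O^\ast}A\nabla U\cdot\nabla U \;\leq\; \liminf_{n\to\infty}\int_{\O_n}A\nabla U_n\cdot\nabla U_n \;=\; \liminf_{n\to\infty}\sum_{i=1}^k\lambda_i(\O_n).
\end{equation*}
Second, since $U_n\to U$ in $L^1(D,\R^k)$, up to a further subsequence $U_n\to U$ almost everywhere, so Fatou's lemma applied to the indicator functions gives $\ind_{\{|U|>0\}}\le\liminf_n\ind_{\{|U_n|>0\}}$ a.e., hence $|\O^\ast|\le\liminf_n|\{|U_n|>0\}|\le\liminf_n|\O_n|$. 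Third, because $(u_1,\dots,u_k)$ is admissible in the variational characterization \eqref{e:carsum} for $\O^\ast$, we have $\sum_{i=1}^k\lambda_i(\O^\ast)\le\int_{\O^\ast}A\nabla U\cdot\nabla U$.

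Putting these together yields $\sum_{i=1}^k\lambda_i(\O^\ast)+\Lambda|\O^\ast|\le\liminf_n\bigl(\sum_{i=1}^k\lambda_i(\O_n)+\Lambda|\O_n|\bigr)$, which proves that $\O^\ast$ is a minimizer. The main technical point is ensuring the passage from the vector-valued minimizing sequence to an admissible quasi-open set: the key observation is that it is precisely \eqref{e:carsum} which allows one to forget the $\O_n$ and work only with the vectors $U_n$, while the set $\O^\ast=\{|U|>0\}$ provides the quasi-open competitor in the limit. The proof of lower semicontinuity of the measure under $L^1$ convergence of the $U_n$ is the only delicate step, but it follows immediately from almost-everywhere convergence along a subsequence.
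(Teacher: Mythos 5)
Your proof is correct and follows essentially the same route as the paper: extract a uniformly $H^1$-bounded minimizing sequence of eigenfunction vectors via the variational characterization \eqref{e:carsum}, pass to a weak $H^1$/strong $L^2$ limit $U$, set $\O^\ast=\{|U|>0\}$, and conclude by weak lower semicontinuity of the quadratic form together with lower semicontinuity of the Lebesgue measure of the positivity set. The only differences are cosmetic: you spell out the Fatou argument for the measure term and the quasi-openness of $\O^\ast$, which the paper leaves implicit.
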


\begin{proof}
Let $(\O_n)_{n\in \N}$ be a minimizing sequence of quasi-open sets to the problem \eqref{e:shapeopt} and denote by $U_n=(u_1^n,\dots,u_k^n)$ the first $k$ eigenfunctions on $\O_n$. Since the matrices $A_x$, $x\in D$, are uniformly elliptic, we have the following inequality
\[ \lambda_{\text{\tiny\sc A}}^{-2} \int_D |\nabla U_n|^2\,dx \leq \int_D A\nabla U_n\cdot\nabla U_n\,dx = \sum_{i=1}^k\lambda_i(\O_n) \]
which infers that the norm $\|U_n\|_{H^1}$ is uniformly bounded. Therefore, up to a subsequence, $U_n$ converges weakly in $H^1(D,\R^k)$ and strongly in $L^2(D,\R^k)$ to some $V\in H^1(D,\R^k)$. Notice that $V$ is an orthonormal vector.
%Let denote by $U=(u_1,\dots,u_k)$ the first $k$ eigenfunctions in the quasi-open set
Set $\O^\ast:=\{|V|> 0\}$. Then using \eqref{e:carsum}, the weak convergence in $H^1$ of $U_n$ to $V$ and the semi-continuity of the Lebesgue measure we have
\begin{align*}
\sum_{i=1}^k\lambda_i(\O^\ast) + \Lambda|\O^\ast| &\leq \int_D A\nabla V\cdot\nabla V\,dx + \Lambda|\{|V|>0\}| \\
&\leq \liminf_n \Big( \int_D A\nabla U_n\cdot\nabla U_n\,dx + \Lambda|\{|U_n|>0\}| \Big) \\
&\leq \liminf_n \Big( \sum_{i=1}^k \lambda_i(\O_n) + \Lambda|\O_n| \Big)
\end{align*}
which concludes the proof.
\end{proof}

In the next Lemma we prove that the eigenfunctions are bounded. This result is a consequence of Lemma \ref{l:degiorgi} and we refer to \cite[Lemma 5.4]{russ-trey-velichkov-19} for a proof which is based on an interpolation argument.

\begin{lm}[Boundedness of the eigenfunctions]\label{l:boundedness}
Let $\O\subset\R^d$ be a bounded quasi-open set. There exist a dimensional constant $n\in \N$ and a constant $C>0$ depending only on $d, \lambda_{\text{\tiny\sc A}}, c_b$ and $|\Omega|$, such that the resolvent operator $R_{\O}:L^2(\O;m)\rightarrow L^2(\O;m)$ satisfies
$$R^n (L^2(\Omega;m))\subset L^\infty(\Omega)\qquad \hbox{and}\qquad \|R^n\|_{\mathcal{L}(L^2(\Omega;m);L^\infty(\Omega))}\le C.$$
In particular, if $u$ is an eigenfunction on $\Omega$ normalized by $\left\Vert u\right\Vert_{L^2(m)}=1$, then $u\in L^\infty(\Omega)$ and 
$$\|u\|_{L^\infty}\le C\lambda(\Omega)^n,$$  
where $\lambda(\O)$ denotes the eigenvalue corresponding to $u$.
\end{lm}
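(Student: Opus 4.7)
The plan is to combine the De Giorgi $L^\infty$-estimate of Lemma \ref{l:degiorgi} with a Sobolev bootstrap for the resolvent $R_\Omega$. The rough idea is that every application of $R_\Omega$ improves the integrability of the input by a fixed Sobolev amount (roughly $2/d$ in the reciprocal exponent), so that after finitely many iterations one lands in $L^p$ with $p>d/2$, at which point one last application of Lemma \ref{l:degiorgi} produces an $L^\infty$ estimate.

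The main preliminary step I would establish is a gain in integrability for $R_\Omega$: if $p\in[2,d/2)$ and $f\in L^p(\Omega;m)$, then $u=R_\Omega f$ belongs to $L^q(\Omega;m)$ with $\tfrac1q=\tfrac1p-\tfrac2d$, and $\|u\|_{L^q}\leq C\|f\|_{L^p}$ for a constant $C=C(d,\lambda_{\text{\tiny\sc A}},c_b,|\Omega|)$. This is a standard consequence of the ellipticity of $A$, the bounds $c_b^{-1}\leq b\leq c_b$, and the Sobolev embedding $H^1_0\hookrightarrow L^{2^*}$: one tests the equation $-\dive(A\nabla u)=fb$ with $u|u|^{s-2}$ for suitable exponents $s\geq 1$ in the style of Moser/Stampacchia.

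Next I would iterate. Setting $p_0=2$ and $\tfrac1{p_{k+1}}=\tfrac1{p_k}-\tfrac2d$, one gets $R_\Omega^k:L^2(\Omega;m)\to L^{p_k}(\Omega;m)$ as long as $p_k<d/2$. Since $1/p_k$ decreases by $2/d$ at each step, there is a dimensional integer $n_0$ for which $p_{n_0}>d/2$. Applying Lemma \ref{l:degiorgi} to $R_\Omega(R_\Omega^{n_0}f)$, whose right-hand side lies in $L^{p_{n_0}}$ with $p_{n_0}>d/2$, then shows that $R_\Omega^{n_0+1}$ maps $L^2(\Omega;m)$ continuously into $L^\infty(\Omega)$, with norm controlled by a constant $C=C(d,\lambda_{\text{\tiny\sc A}},c_b,|\Omega|)$. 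Choosing $n=n_0+1$ yields the first assertion.

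The eigenfunction estimate then follows immediately: if $u$ is an eigenfunction with eigenvalue $\lambda(\Omega)$ and $\|u\|_{L^2(m)}=1$, the defining relation $-\dive(A\nabla u)=\lambda(\Omega)\,bu$ says exactly that $R_\Omega u=\lambda(\Omega)^{-1}u$, whence $R_\Omega^n u=\lambda(\Omega)^{-n}u$ and so
\[
\|u\|_{L^\infty}=\lambda(\Omega)^n\,\|R_\Omega^n u\|_{L^\infty}\leq C\lambda(\Omega)^n\|u\|_{L^2(m)}=C\lambda(\Omega)^n.
\]
The principal technical obstacle is to carry out the Sobolev-gain step with a constant whose dependence on $|\Omega|$ does not blow up through the iteration; this is purely a bookkeeping matter, since each gain step introduces only a finite power of $|\Omega|$ and of the structural constants, and after $n$ applications these can be combined into a single constant of the stated form.
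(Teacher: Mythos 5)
Your proof is correct and follows the same overall scheme the paper has in mind: iterate the resolvent to gain integrability one Sobolev step at a time, and close the loop with Lemma~\ref{l:degiorgi} once the exponent exceeds $d/2$. The paper does not write out the argument but defers to \cite[Lemma~5.4]{russ-trey-velichkov-19}, which it describes as an interpolation argument; your Moser/Stampacchia testing with powers $u|u|^{s-2}$ is a standard and equivalent way to obtain the single-step gain $R_\Omega:L^p\to L^q$, $\frac1q=\frac1p-\frac2d$, and the resulting bootstrap is identical. Two minor points worth tightening. First, the sequence $\frac1{p_k}=\frac12-\frac{2k}d$ can land exactly on the threshold $\frac1{p_k}=\frac2d$ (this happens whenever $d$ is a multiple of $4$, already at $k=0$ when $d=4$), in which case neither your gain step nor Lemma~\ref{l:degiorgi} applies as stated; one should use a slightly smaller gain per step (which the embedding gives for free on a finite-measure domain), or observe that $H^1_0\hookrightarrow L^{2^*}$ already puts $R_\Omega f$ in $L^{2^*}$ with $2^*>d/2$ for $d\le 4$, so one can start the count there. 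Second, to keep the operator norm $\|R_\Omega^n\|_{\mathcal L(L^2;L^\infty)}$ with the stated dependence on $|\Omega|$, one should record at each step that the constant in the gain estimate depends only on $d,\lambda_{\text{\tiny\sc A}},c_b$ and $|\Omega|$ — which your testing argument does give, since $c_b^{-1}\le b\le c_b$ and $|\Omega|<\infty$ control all the embedding constants. The eigenfunction computation $R_\Omega u=\lambda(\Omega)^{-1}u$, hence $u=\lambda(\Omega)^n R_\Omega^n u$ and $\|u\|_{L^\infty}\le C\lambda(\Omega)^n$, is exactly right.
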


To conclude this subsection, we show that the first eigenfunction on an optimal set $\O^\ast$ keeps the same sign on every connected component of $\O^\ast$. Notice that $\O^\ast$ may not be connected and has at most $k$ connected components.

\begin{lm}[Sign of the principal eigenfunction]
Let $\O\subset D$ be an open and connected set and let $u\in H^1_0(\O)$ be the normalized first eigenvalue on $\O$, that is
\[ -\dive(A\nabla u) = \lambda_1(\O)\,b\,u \quad\text{in}\quad \O\qquad\text{and}\qquad \int_{\O}u^2\,dm=1. \]
Then $u$ is non-negative in $\O$ (up to a change of sign).
\end{lm}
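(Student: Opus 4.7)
My strategy is to combine the variational characterization of $\lambda_1(\O)$ with the Harnack inequality for uniformly elliptic divergence-form operators. The starting observation is that, as a first eigenfunction, $u$ attains the minimum in
\[ \lambda_1(\O)=\min_{v\in H^1_0(\O)\setminus\{0\}}\frac{\int_\O A\nabla v\cdot\nabla v\,dx}{\int_\O v^2\,dm}. \]
Since $|u|\in H^1_0(\O)$ with $|\nabla|u||=|\nabla u|$ almost everywhere (by the classical Stampacchia lemma on the sign of the gradient of Sobolev functions), the Rayleigh quotient of $|u|$ coincides with that of $u$ and equals $\lambda_1(\O)$. Hence $|u|$ is itself a minimizer of the Rayleigh quotient, and the associated Euler--Lagrange equation yields
\[ -\dive(A\nabla|u|)=\lambda_1(\O)\,b\,|u|\quad\text{in }\O, \]
so that $|u|\geq 0$ is a weak solution of a uniformly elliptic equation in divergence form with bounded measurable coefficients.

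Next, I would invoke the De Giorgi--Nash--Moser Harnack inequality (see e.g.\ \cite[Theorem~8.20]{gilbarg-trudinger-01}) applied to $|u|$: for every compact set $K\Subset\O$, there exists $C_K>0$ such that $\sup_K|u|\leq C_K\inf_K|u|$. Since $\O$ is open and connected, a standard chaining argument over overlapping balls produces the dichotomy that either $|u|\equiv 0$ on $\O$ or $|u|>0$ throughout $\O$. The first alternative is excluded because $u$ is a non-trivial eigenfunction, so $|u|>0$ in $\O$.

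To conclude, elliptic regularity (the H\"older continuity of $A$ together with boundedness of $b$) ensures that $u$ is continuous in $\O$; since $|u|>0$ in $\O$ and $\O$ is connected, $u$ cannot vanish anywhere, and therefore must keep a constant sign on $\O$ by continuity. Replacing $u$ by $-u$ if necessary, we get $u\geq 0$ in $\O$ (in fact $u>0$). The main technical point I would have to verify carefully is the passage from $u$ to $|u|$ as an eigenfunction, namely that every minimizer of the Rayleigh quotient with the weighted normalization $\int_\O v^2\,dm=1$ solves the eigenvalue equation weakly; this is standard Lagrange multiplier reasoning but one must make sure the weight $b$ enters the Euler--Lagrange equation correctly, producing exactly $-\dive(A\nabla|u|)=\lambda_1(\O)\,b\,|u|$ and allowing Harnack to be applied cleanly.
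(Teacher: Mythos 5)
Your proof is correct and takes essentially the same approach as the paper: replace $u$ by a nonnegative function with the same Rayleigh quotient, deduce that it is also a first eigenfunction, and then apply a strong-maximum-principle argument on the connected set $\O$ to get strict positivity. The only small differences are cosmetic: the paper works with the normalized positive part $u^+/\|u^+\|_{L^2(m)}$ and shows via an energy-additivity/contradiction argument that it is a minimizer, then invokes \cite[Theorem 8.19]{gilbarg-trudinger-01} (strong maximum principle), whereas you use $|u|$ directly (which makes the equality of Rayleigh quotients immediate, bypassing the contradiction step) and invoke the Harnack inequality, a slightly stronger tool that implies the same conclusion.
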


\begin{proof}
We assume that $u^+\neq 0$ (if not, take $-u$ instead of $u$) and we set
\[ u_+=u^+/\|u^+\|_{L^2(m)}\qquad \text{and}\qquad u_-=u^-/\|u^-\|_{L^2(m)}. \]
Since $u$ is variationally characterized by
\begin{equation}\label{e:upos1}
 \lambda_1(\O)=\int_{\O}A\nabla u\cdot\nabla u\,dx = \min\Big\{\int_{\O}A\nabla\tilde{u}\cdot\nabla\tilde{u}\,dx \ : \ \tilde{u}\in H^1_0(\O), \ \int_{\O}\tilde{u}^2\,dm=1  \Big\}, 
\end{equation}
we have
\[ \int_{\O}A\nabla u\cdot\nabla u\,dx\leq \int_{\O}A\nabla u_+\cdot\nabla u_+\,dx\qquad\text{and}\qquad \int_{\O}A\nabla u\cdot\nabla u\,dx\leq \int_{\O}A\nabla u_-\cdot\nabla u_-\,dx. \]
Then, it follows that the two above inequalities are in fact equalities since otherwise we have
\begin{align*}
\int_{\O}A\nabla u\cdot\nabla u\,dx &=\int_{\O}A\nabla u^+\cdot\nabla u^+\,dx + \int_{\O}A\nabla u^-\cdot\nabla u^-\,dx \\
&>\Big(\int_{\O}(u^+)^2\,dm + \int_{\O}(u^-)^2\,dm \Big)\int_{\O}A\nabla u\cdot\nabla u\,dx = \int_{\O}A\nabla u\cdot\nabla u\,dx,
\end{align*}
which is absurd. In view of the minimization characterization \eqref{e:upos1}, this ensures that $u_+$ is solution of the equation
\[ -\dive(A\nabla u_+)=\lambda_1(\O)u_+b \quad\text{in}\quad \O. \]
Then, the strong maximum principle (see \cite[Theorem 8.19]{gilbarg-trudinger-01}) and the connectedness of $\O$ imply that $u_+$ is strictly positive in $\O$, which completes the proof.
\end{proof}

\subsection{Quasi-minimality and Lipschitz continuity of the eigenfunctions}
We prove that the vector $U=(u_1,\dots,u_k)$ of normalized eigenfunctions on an optimal set $\O^\ast$ for the problem \eqref{e:shapeopt} is a local quasi-minimizer of the vector-valued functional
\[ H^1_0(D,\R^k)\ni \tilde{U}\mapsto \int_DA\nabla\tilde{U}\cdot\nabla\tilde{U}\,dx + \Lambda|\{|\tilde{U}|>0\}| \]
in the sense of the Proposition below. The Lipschitz continuity of the eigenfunctions is then a consequence of Theorem \ref{t:main2}. We notice that, in view of the variational formulation \eqref{e:carsum}, the vector $U$ is solution to the following problem
\begin{equation}\label{e:carU}
\min\bigg\{ \int_DA\nabla V\cdot\nabla V\,dx + \Lambda|\{|V|>0\}| \ : \ V=(v_1,\dots,v_k)\in H^1_0(D,\R^k), \int_Dv_iv_j\,dm=\delta_{ij} \bigg\}.
\end{equation}

\begin{prop}[Quasi-minimality of $U$]\label{p:quasiminU}
Let $\O^\ast\subset D$ be an optimal set for the problem \eqref{e:shapeopt}. Then the vector of orthonormalized eigenfunctions $U=(u_1,\dots,u_k)\in H^1_0(\O^\ast,\R^k)$ satisfies the following quasi-minimality condition:
for every $C_1>0$ there exist constants $\eps\in (0,1)$ and $C>0$, depending only on $d, k, C_1, \|U\|_{L^\infty}$ and $|D|$, such that
\begin{equation}\label{e:quasiminUa}
\int_DA\nabla U\cdot\nabla U\,dx + \Lambda|\{|U|>0\}| \leq \big(1+C\|U-\tilde{U}\|_{L^1}\big) \int_DA\nabla\tilde{U}\cdot\nabla\tilde{U}\,dx + \Lambda|\{|\tilde{U}|>0\}|,
\end{equation}
for every $\tilde{U} \in H^1_0(D,\R^k)$ such that $\|U-\tilde{U}\|_{L^1}\leq\eps$ and $\|\tilde{U}\|_{L^\infty}\leq C_1$.
\end{prop}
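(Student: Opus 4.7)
The approach is to reduce the quasi-minimality \eqref{e:quasiminUa} to the variational characterization \eqref{e:carU} together with the optimality of $\Omega^*$. The obstruction is that a generic competitor $\tilde{U}$ does not satisfy the orthonormality constraints $\int \tilde{u}_i\tilde{u}_j\,dm=\delta_{ij}$, and so cannot be used directly as a test vector. Following the idea from \cite{mazzoleni-terracini-velichkov-17}, I would correct this by post-composing $\tilde U$ with a linear isomorphism of $\R^k$: set $M_{ij}=\int_D \tilde u_i\tilde u_j\,dm$ and define $V=M^{-\sfrac12}\tilde U$, viewing $\tilde U$ as a column vector. Since $M^{-\sfrac12}$ is a fixed invertible $k\times k$ matrix, one has $|V|(x)>0\iff \tilde U(x)\neq 0\iff |\tilde U|(x)>0$, so the Lebesgue measure term is unchanged and $V$ is admissible in \eqref{e:carU} on the quasi-open set $\{|V|>0\}=\{|\tilde U|>0\}$.

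First I would quantify that $M$ is close to the identity. Writing $M_{ij}-\delta_{ij}=\int(\tilde u_i-u_i)\tilde u_j\,dm+\int u_i(\tilde u_j-u_j)\,dm$ and invoking Lemma \ref{l:boundedness} to control $\|U\|_{L^\infty}$ together with the hypothesis $\|\tilde U\|_{L^\infty}\le C_1$ and \eqref{e:hypfctb}, this yields $|M_{ij}-\delta_{ij}|\le C\|U-\tilde U\|_{L^1}$ with $C$ depending only on $k,C_1,\|U\|_{L^\infty}$ and $c_b$. Choosing $\varepsilon$ small enough, $M$ is invertible and $M^{-\sfrac12}$ is well defined with $\|M^{-1}-\mathrm{Id}\|\le C\|U-\tilde U\|_{L^1}$ (and in particular $V$ is indeed orthonormal).

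Next I would compare Dirichlet energies. Since $\nabla V_i=\sum_j (M^{-\sfrac12})_{ij}\nabla \tilde u_j$, a direct computation gives
\begin{equation*}
\int_D A\nabla V\cdot\nabla V\,dx=\sum_{j,l}(M^{-1})_{jl}\int_D A\nabla\tilde u_j\cdot\nabla\tilde u_l\,dx.
\end{equation*}
Splitting $M^{-1}=\mathrm{Id}+(M^{-1}-\mathrm{Id})$ and applying Cauchy--Schwarz on each off-diagonal term (so that $|\int A\nabla\tilde u_j\cdot\nabla\tilde u_l|\le \int A\nabla\tilde U\cdot\nabla\tilde U$), the previous perturbation estimate on $M^{-1}$ yields
\begin{equation*}
\int_D A\nabla V\cdot\nabla V\,dx\le \bigl(1+C\|U-\tilde U\|_{L^1}\bigr)\int_D A\nabla\tilde U\cdot\nabla\tilde U\,dx.
\end{equation*}

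Finally I would chain the inequalities. By the variational formula \eqref{e:carsum} on the quasi-open set $\{|V|>0\}$ one has $\sum_{i=1}^k\lambda_i(\{|V|>0\})\le \int_D A\nabla V\cdot\nabla V\,dx$, and then the optimality of $\Omega^*$ for \eqref{e:shapeopt} together with $\int_D A\nabla U\cdot\nabla U\,dx=\sum_{i=1}^k\lambda_i(\Omega^*)$ gives
\begin{equation*}
\int_D A\nabla U\cdot\nabla U\,dx+\Lambda|\{|U|>0\}|\le \int_D A\nabla V\cdot\nabla V\,dx+\Lambda|\{|V|>0\}|,
\end{equation*}
and combining with the two displays above proves \eqref{e:quasiminUa}. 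I expect the main technical point to be the perturbation analysis of $M^{-\sfrac12}$: one needs the bound on $\|M^{-1}-\mathrm{Id}\|$ to pass into the Dirichlet comparison without accruing extra factors, which is precisely where the $L^\infty$ bounds on both $U$ and $\tilde U$ (and the lower bound on $b$) are crucial.
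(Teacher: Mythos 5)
Your proof is correct, but it routes through a genuinely different orthonormalization than the paper. The paper's Lemma \ref{l:ortho} orthonormalizes $\tilde U$ by an iterative Gram--Schmidt procedure (a lower-triangular transformation), and the heart of that lemma is an induction producing, step by step, $L^1$ and $L^\infty$ estimates on each $w_i$ and $v_i$ before assembling the Dirichlet comparison. You instead use the symmetric (L\"owdin) orthonormalization $V=M^{-\sfrac12}\tilde U$ with $M$ the Gram matrix $M_{ij}=\int_D\tilde u_i\tilde u_j\,dm$, and replace the entire induction by one matrix-perturbation estimate $\|M-\mathrm{Id}\|\le C\|U-\tilde U\|_{L^1}$ followed by the identity $\int_DA\nabla V\cdot\nabla V=\sum_{j,l}(M^{-1})_{jl}\int_D A\nabla\tilde u_j\cdot\nabla\tilde u_l$. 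This is cleaner and structurally more transparent: all the bookkeeping collapses into ``$M^{-1}$ is $C\varepsilon$-close to the identity.'' Both arguments ultimately hinge on the same mechanism --- the $L^\infty$ bounds on $U$ (via Lemma \ref{l:boundedness}) and on $\tilde U$ (by hypothesis), together with \eqref{e:hypfctb}, are what let one control the Gram matrix perturbation by the $L^1$ distance --- and both conclude by testing \eqref{e:carU} with $V$ and using $\{|V|>0\}=\{|\tilde U|>0\}$, which holds in your case because $M^{-\sfrac12}$ is invertible and in the paper's case because the Gram--Schmidt transformation is triangular with nonzero diagonal. A minor remark: like the paper, you implicitly allow the constant to depend on $c_b$ when passing between $L^1(dx)$ and $L^1(m)$; this is consistent with the paper's usage even though $c_b$ is not listed in the statement.
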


The next Lemma, in which we get rid of the orthogonality constraint in \eqref{e:carU}, is similar to Lemma 2.5 in \cite{mazzoleni-terracini-velichkov-17} with only slight modifications, but we decided to recall the whole proof for a sake of completeness.

\begin{lm}\label{l:ortho}
Let $\O\subset D$ be a quasi-open set and let $U=(u_1,\dots,u_k)$ be the vector of normalized eigenvalues on $\O$. Let $\delta>0$. Then there exist $\overline{\eps}_k \in (0,1)$ and $\overline{C}_k>0$, depending only on $d, k, \delta$ and $|\O|$, such that for every $\tilde{U}=(\tilde{u}_1,\dots,\tilde{u}_k) \in H^1_0(D,\R^k)$ satisfying
\[ \eps_k := \sum_{i=1}^k \int_D|\tilde{u}_i-u_i|\,dm \leq \overline{\eps}_k\qquad \text{and} \qquad \sup_{i=1,\dots,k} \Big\{\|u_i\|_{L^\infty} + \|\tilde{u}_i\|_{L^\infty}\Big\} \leq \delta \]
the following estimate holds
\begin{equation}\label{e:ortho}
\int_DA\nabla V\cdot\nabla V\,dx \leq (1+\overline{C}_k\eps_k)\int_DA\nabla\tilde{U}\cdot\nabla\tilde{U}\,dx,
\end{equation}
where $V=(v_1,\dots,v_k) \in H^1_0(D,\R^k)$ is the vector obtained by orthonormalizing $\tilde{U}$ with the Gram-Schmidt procedure: 
\[ v_i=w_i/\|w_i\|_{L^2(m)} \qquad \text{where} \quad 
      w_i=\left\{
          \begin{aligned}
            &\tilde{u}_1 &\text{if} \quad& i=1\\
            &\tilde{u}_i-\textstyle\sum_{j=1}^{i-1}\left(\int_D \tilde{u}_iv_j\,dm\right)v_j &\text{if}\quad& i=2,\dots,k.\\
          \end{aligned}
        \right.
     \]
\end{lm}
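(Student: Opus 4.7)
The key observation is that $\tilde{U}$ is almost orthonormal in $L^2(\Omega;m)$. Since $u_i$ and $\tilde{u}_i$ are both bounded by $\delta$ in $L^\infty$, writing $\tilde{u}_i\tilde{u}_j - u_iu_j = (\tilde u_i - u_i)\tilde u_j + u_i(\tilde u_j - u_j)$ and integrating against $dm$ gives
\[ \Bigl|\textstyle\int_D \tilde u_i \tilde u_j\,dm - \delta_{ij}\Bigr| \leq 2\delta\,\varepsilon_k\qquad\text{for all}\quad 1\leq i,j\leq k. \]
In particular $\|\tilde u_i\|_{L^2(m)}^2 = 1 + O(\varepsilon_k)$ and the off-diagonal products are of size $O(\varepsilon_k)$.

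The plan is then to run an induction on $i\in\{1,\dots,k\}$ showing the following four statements simultaneously:
(a) $\|v_i\|_{L^\infty}\leq C(\delta,k)$;
(b) $\|v_i-u_i\|_{L^1(m)}\leq C_i\varepsilon_k$;
(c) $|c_{ij}|:=|\int_D \tilde u_i v_j\,dm|\leq C_i\varepsilon_k$ for all $j<i$; and
(d) $\bigl|\|w_i\|_{L^2(m)}^2-1\bigr|\leq C_i\varepsilon_k$. The base case $i=1$ is immediate from the almost-orthonormality above. For the inductive step I would split
\[ c_{ij} = \int_D (\tilde u_i-u_i)v_j\,dm + \int_D u_i (v_j-u_j)\,dm + \int_D u_i u_j\,dm, \]
where the last term vanishes because $j<i$ and $U$ is orthonormal; the first is controlled by $\|v_j\|_{L^\infty}\varepsilon_k$ via (a), the second by $\delta\,C_j\varepsilon_k$ via (b). The identity $\|w_i\|_{L^2(m)}^2 = \|\tilde u_i\|_{L^2(m)}^2 - \sum_{j<i} c_{ij}^2$ (obtained from the orthonormality of the $v_j$'s) then yields (d), and the $L^\infty$ and $L^1(m)$ bounds for $v_i=w_i/\|w_i\|_{L^2(m)}$ follow.

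Having carried out this bookkeeping, each $v_j$ is a linear combination $v_j = \sum_{l\leq j}\beta_{jl}\tilde u_l$ with bounded coefficients, so $w_i$ can be written as
\[ w_i = \tilde u_i + \sum_{l<i}\gamma_{il}\tilde u_l,\qquad |\gamma_{il}|\leq C\varepsilon_k, \]
because $\gamma_{il}$ is a finite sum of products of the small quantities $c_{ij}$ with the bounded $\beta_{jl}$. Expanding $\int_D A\nabla w_i\cdot\nabla w_i\,dx$, applying the elementary inequality $2|ab|\leq\varepsilon_k\,a^2 + \varepsilon_k^{-1}b^2$ with the right scaling (using $|\gamma_{il}|\leq C\varepsilon_k$ so that $\gamma_{il}^2/\varepsilon_k\leq C\varepsilon_k$), one obtains
\[ \int_D A\nabla w_i\cdot\nabla w_i\,dx \leq (1+C\varepsilon_k)\int_D A\nabla\tilde u_i\cdot\nabla\tilde u_i\,dx + C\varepsilon_k\sum_{l<i}\int_D A\nabla\tilde u_l\cdot\nabla\tilde u_l\,dx. \]
Dividing by $\|w_i\|_{L^2(m)}^2 = 1+O(\varepsilon_k)$ and summing over $i=1,\dots,k$ produces an absorbable tail in which each $\int A\nabla\tilde u_l\cdot\nabla\tilde u_l$ appears at most $k$ times with coefficient $O(\varepsilon_k)$, giving the desired estimate \eqref{e:ortho}.

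The main obstacle is the bookkeeping in the induction: one has to ensure that all constants depend only on $d$, $k$, $\delta$ and $|\Omega|$, and in particular that $\|v_j\|_{L^\infty}$ remains bounded independently of $\varepsilon_k$ at each step, which requires choosing $\overline\varepsilon_k$ small enough so that every denominator $\|w_j\|_{L^2(m)}$ stays uniformly bounded away from zero. Once this is arranged, the expansion of $\int A\nabla w_i\cdot\nabla w_i$ and the division by $\|w_i\|_{L^2(m)}^2$ are routine.
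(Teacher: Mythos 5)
Your plan is correct, and its first half is essentially the same quantitative Gram--Schmidt bookkeeping as the paper's: an induction producing $L^\infty$ bounds on the $v_j$, $L^1(m)$ closeness $\|v_j-u_j\|_{L^1(m)}\leq C\eps_k$, and the smallness $|\int_D\tilde u_i v_j\,dm|\leq C\eps_k$ of the Gram--Schmidt coefficients via the same splitting into $(\tilde u_i-u_i)$, $(v_j-u_j)$ and the vanishing term $\int u_iu_j\,dm$. Where you genuinely diverge is in two places. First, you control $\|w_i\|_{L^2(m)}$ through the almost-orthonormality of $\tilde U$ itself ($\|\tilde u_i\|_{L^2(m)}^2=1+O(\eps_k)$) combined with the Pythagorean identity $\|w_i\|_{L^2(m)}^2=\|\tilde u_i\|_{L^2(m)}^2-\sum_{j<i}c_{ij}^2$, whereas the paper compares $w_k$ directly with $u_k$ in $L^1(m)$. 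Second, and more substantially, your energy estimate is not run by induction: you rewrite $w_i=\tilde u_i+\sum_{l<i}\gamma_{il}\tilde u_l$ with $|\gamma_{il}|\leq C\eps_k$ (which needs the easy extra observation that $v_j=\sum_{l\leq j}\beta_{jl}\tilde u_l$ with $\beta_{jj}\leq 2$ and $|\beta_{jl}|\leq C\eps_k$ for $l<j$, so do record that in the induction), expand the quadratic form, and absorb the cross terms by Cauchy--Schwarz and Young with weight $\eps_k$, then divide by $\|w_i\|_{L^2(m)}^2$ and sum over $i$. The paper instead proves \eqref{e:ortho} by a second induction on $k$, using the triangle inequality in the energy norm $\big(\int_D A\nabla\cdot\,\cdot\nabla\cdot\big)^{1/2}$ and the inductive bound on $\sum_{j<k}\int_D A\nabla v_j\cdot\nabla v_j$ to absorb the tail. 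Both routes give the same dependence of the constants on $d,k,\delta,|\O|$; yours trades the nested induction and the energy-norm absorption for a little explicit linear algebra in the $\tilde u$-basis, which is arguably cleaner and makes the $O(\eps_k)$ loss per term completely transparent, while the paper's version avoids having to track the change-of-basis coefficients $\beta_{jl}$ at all.
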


\begin{proof}
We first prove an estimate of $\|u_k-w_k\|_{L^2(m)}$ in terms of $\eps_k$. Precisely, we prove by induction on $k$ that there exist constants $\overline{\eps}_k\in(0,1)$ and $C_k>0$ such that the following estimates hold whenever $\eps_k\leq\overline{\eps}_k$
\begin{equation}\label{e:ortho3}
\|u_k-w_k\|_{L^1(m)} \leq C_k\eps_k, \quad \sum_{i=1}^k\|u_i-v_i\|_{L^1(m)}\leq C_k\eps_k, \quad\max_{i=1,\dots,k}\|v_i\|_{L^\infty}\leq C_k.
\end{equation}
For $k=1$ the first estimate obviously holds with $C_1\geq 1$. Moreover we have
\begin{align}\label{e:ortho1}
\|\tilde{u}_1-v_1\|_{L^1(m)} 
\nonumber &=\frac{|\|\tilde{u}_1\|_{L^2(m)}-1|}{\|\tilde{u}_1\|_{L^2(m)}}\|\tilde{u}_1\|_{L^1(m)} 
\leq \frac{|\|\tilde{u}_1\|_{L^2(m)}^2-1|}{\|\tilde{u}_1\|_{L^2(m)}^2}\|\tilde{u}_1\|_{L^1(m)} \\
&=\frac{|\|u_1+(\tilde{u}_1-u_1)\|_{L^2(m)}^2-1|}{\|u_1+(\tilde{u}_1-u_1)\|_{L^2(m)}^2}\|u_1+(\tilde{u}_1-u_1)\|_{L^1(m)} \\
\nonumber &\leq \frac{2\int u_1|\tilde{u}_1-u_1|\,dm + \|\tilde{u}_1-u_1\|_{L^2(m)}^2}{1-2\int u_1|\tilde{u}_1-u_1|\,dm}\Big(\|u_1\|_{L^1(m)}+\|\tilde{u}_1-u_1\|_{L^1(m)} \Big) \\
\nonumber&\leq \frac{(2\|u_1\|_{L^\infty}+\|\tilde{u}_1-u_1\|_{L^\infty})\|\tilde{u}_1-u_1\|_{L^1(m)}}{1-2\|u_1\|_{L^\infty}\|\tilde{u}_1-u_1\|_{L^1(m)}}\Big(\|u_1\|_{L^1(m)}+\|\tilde{u}_1-u_1\|_{L^1(m)} \Big) \\
\nonumber&\leq \frac{3\delta\eps_1}{1-2\delta\eps_1}\Big(|\O|^{\sfrac12}+\eps_1 \Big)
\leq 12\delta|\O|^{\sfrac12}\eps_1,
\end{align}
where the last inequality holds if $\eps_1\leq\min\{(4\delta)^{-1},|\O|^{\sfrac12}\}$. This gives the following $L^1$-estimate
\[ \|u_1-v_1\|_{L^1(m)} \leq \|u_1-\tilde{u}_1\|_{L^1(m)} + \|\tilde{u}_1-v_1\|_{L^1(m)} \leq (1+12\delta|\O|^{\sfrac12})\eps_1. \]
Finally, we estimate the infinity norm
\begin{align*}
\|v_1\|_{L^\infty} &= \frac{\|\tilde{u}\|_{L^\infty}}{\|\tilde{u}_1\|_{L^2(m)}}
= \frac{\|\tilde{u}\|_{L^\infty}}{\|u_1+(\tilde{u}_1-u_1)\|_{L^2(m)}} 
\leq \frac{\|\tilde{u}\|_{L^\infty}}{(1-2\int u_1|\tilde{u}_1-u_1|\,dm)^{\sfrac12}} \\
&\leq \frac{\|\tilde{u}\|_{L^\infty}}{1-2\int u_1|\tilde{u}_1-u_1|\,dm}
\leq \frac{\delta}{1-2\delta\eps_1} \leq 2\delta,
\end{align*}
which proves the claim for $k=1$.
Suppose now that the claim holds for $1,\dots,k-1$. We first estimate $\|u_k-w_k\|_{L^1(m)}$. Since the functions $u_i$ form an orthogonal system of $L^2(\O,m)$ and by the induction's hypothesis we have (and also because $\eps_{k-1}\leq\eps_k$)
\begin{align}\label{e:ortho2}
\sum_{i=1}^{k-1}\bigg|\int_D\tilde{u}_k v_i\,dm\bigg| \nonumber&\leq \sum_{i=1}^{k-1}\int_D\Big( |\tilde{u}_k-u_k|u_i+|v_i-u_i|u_k+|v_i-u_i||\tilde{u}_k-u_k|\Big)\,dm \\
&\leq ((k-1)\delta + \delta C_{k-1} + (k-1)(C_{k-1} + \delta))\eps_k =: \tilde{C}_k \eps_k.
\end{align}
Therefore, with the triangle inequality we obtain
\begin{align}\label{e:ortho4}
\nonumber\|u_k-w_k\|_{L^1(m)} &\leq \|u_k-\tilde{u}_k\|_{L^1(m)} + \sum_{i=1}^{k-1}\bigg|\int_D\tilde{u}_k v_i\,dm\bigg| \,(\|u_i\|_{L^1(m)} + \|v_i-u_i\|_{L^1(m)}) \\
&\leq (1+\tilde{C}_k(|\O|^{\sfrac12}+\eps_{k-1}))\eps_k\leq (1+2\tilde{C}_k|\O|^{\sfrac12})\eps_k.
\end{align}
We now prove the second estimate in \eqref{e:ortho3}.
Using once again \eqref{e:ortho2}, we have the following estimate of the $L^\infty$-norm of $w_k$
\begin{equation}\label{e:ortho5}
\|w_k\|_{L^\infty} \leq \|\tilde{u}_k\|_{L^\infty}+\sum_{i=1}^k\bigg|\int_D\tilde{u}_kv_i\,dm\bigg|\|v_i\|_{L^\infty}
\leq \delta + C_{k-1}\tilde{C}_k.
\end{equation}
Moreover, with the same procedure as in \eqref{e:ortho1}, it follows from \eqref{e:ortho4} and \eqref{e:ortho5} that
\begin{align*}
\|\tilde{u}_k-v_k\|_{L^1(m)} &\leq \frac{(3\|u_k\|_{L^\infty}+\|w_k\|_{L^\infty})\|w_k-u_k\|_{L^1(m)}}{1-2\|u_k\|_{L^\infty}\|w_k-u_k\|_{L^1(m)}}\Big(\|u_k\|_{L^1(m)}+\|w_k-u_k\|_{L^1(m)} \Big) \\
&\leq \frac{(4\delta+C_{k-1}\tilde{C}_k)(1+2\tilde{C}_k|\O|^{\sfrac12})}{1-2\delta(1+2\tilde{C}_k|\O|^{\sfrac12})\eps_k}(|\O|^{\sfrac12} + (1+2\tilde{C}_k|\O|^{\sfrac12}))\eps_k.
\end{align*}
Now, choose $\eps_k \leq [4\delta (1+2\tilde{C}_k|\O|^{\sfrac12})]^{-1}$ so that with the triangle inequality we have
\begin{align*}
\|u_k-v_k\|_{L^1(m)} &\leq \|u_k-\tilde{u}_k\|_{L^1(m)} + \|\tilde{u}_k-v_k\|_{L^1(m)} \\
&\leq \Big[1+2(4\delta+C_{k-1}\tilde{C}_k)(1+2\tilde{C}_k|\O|^{\sfrac12})(|\O|^{\sfrac12} + (1+2\tilde{C}_k|\O|^{\sfrac12})\Big]\eps_k.
\end{align*}
We then use the inductive hypothesis to get the desired $L^1$-estimate. It remains only to estimate $\|v_k\|_{L^\infty}$. Firstly, notice that we have
\begin{align*}
|\ \|w_k\|_{L^2(m)}-1| &\leq |\ \|w_k\|_{L^2(m)}^2-1| = |\ \|u_k + (w_k-u_k)\|_{L^2(m)}^2-1| \\
&\leq \bigg| 2\int_D u_k(u_k-w_k)\,dm + \int_D(u_k-w_k)^2\,dm\bigg| \\
&\leq (3\|u_k\|_{L^\infty}  + \|w_k\|_{L^\infty})\|u_k-w_k\|_{L^1(m)} \\
&\leq (4\delta + C_{k-1}\tilde{C}_k)(1+2\tilde{C}_k|\O|^{\sfrac12})\eps_k.
\end{align*}
Thus, with the extra assumption $\eps_k \leq [(4\delta + C_{k-1}\tilde{C}_k)(1+2\tilde{C}_k|\O|^{\sfrac12})]^{-1}$, it follows that $1/2 \leq \|w_k\|_{L^2(m)} \leq 3/2$. With \eqref{e:ortho5} this gives the following $L^\infty$-norm of $v_k$
\[ \|v_k\|_{L^\infty} = \frac{\|w_k\|_{L^\infty}}{\|w_k\|_{L^2(m)}} \leq 2(\delta + C_{k-1}\tilde{C}_k) \]
and concludes the proof of the claim.

We are now in position to prove the Lemma by induction. For $k=1$, we ask that $\eps_1\leq(4\delta)^{-1}$, so that we have
\begin{align*}
\int_DA\nabla v_1\cdot\nabla v_1\,dx &\leq \|\tilde{u}_1\|_{L^2(m)}^{-2}\int_DA\nabla \tilde{u}_1\cdot\nabla \tilde{u}_1\,dx \\
&\leq (1-2\|u_1\|_{L^\infty}\|\tilde{u}_1-u_1\|_{L^1(m)})^{-2}\int_DA\nabla \tilde{u}_1\cdot\nabla \tilde{u}_1\,dx \\
&\leq (1+4\delta\eps_1)^2\int_DA\nabla \tilde{u}_1\cdot\nabla \tilde{u}_1\,dx 
\leq (1+12\delta\eps_1)\int_DA\nabla \tilde{u}_1\cdot\nabla \tilde{u}_1\,dx.
\end{align*}
Suppose now that the Lemma holds for $1,\dots,k-1$. Thanks to the first estimate in \eqref{e:ortho3} of the preceding claim we have
\[ \|w_k\|_{L^2(m)}^{-2} \leq (1-2\|u_k\|_{L^\infty}\|u_k-w_k\|_{L^1(m)})^{-2} \leq (1+4\delta C_k\eps_k)^2 \leq 1+12\delta C_k\eps_k, \]
where the last inequality holds if $\eps_k \leq (4\delta C_k)^{-1}$.
On the other hand, for every $i=1,\dots,k-1$, we have by the inductive assumption
\[ \int_DA\nabla v_i\cdot\nabla v_i\,dx \leq \sum_{j=1}^{k-1}\int_DA\nabla v_j\cdot\nabla v_j\,dx \leq (1+\overline{C}_{k-1}\eps_{k-1})\sum_{j=1}^{k-1}\int_DA\nabla\tilde{u}_j\cdot\nabla\tilde{u}_j\,dx. \]
Therefore, using the estimate \eqref{e:ortho2} we get
\begin{multline*}
\left(\int_DA\nabla w_k\cdot\nabla w_k\,dx\right)^{1/2} \leq \left(\int_DA\nabla \tilde{u}_k\cdot\nabla \tilde{u}_k\,dx\right)^{1/2} + \sum_{i=1}^{k-1}\bigg|\int_D\tilde{u}_kv_i\,dm\bigg|\left(\int_DA\nabla v_i\cdot\nabla v_i\,dx\right)^{1/2} \\
\leq \left(\int_DA\nabla \tilde{u}_k\cdot\nabla \tilde{u}_k\,dx\right)^{1/2} + \tilde{C}_k\eps_k(1+\overline{C}_{k-1})^{1/2} \left(\sum_{j=1}^{k-1}\int_DA\nabla\tilde{u}_j\cdot\nabla\tilde{u}_j\,dx\right)^{1/2}.
\end{multline*}
We then ask that $\eps_k \leq (2\tilde{C}_k)^{-1}(1+\overline{C}_{k-1})^{-1/2}$ so that we get
\begin{multline*}
\int_DA\nabla v_k\cdot\nabla v_k\,dx = \|w_k\|_{L^2(m)}^{-2}\int_DA\nabla w_k\cdot\nabla w_k\,dx \\
\leq (1+12\delta C_k\eps_k)\left((1+\tilde{C}_k(1+\overline{C}_{k-1})^{1/2}\eps_k)\int_DA\nabla \tilde{u}_k\cdot\nabla \tilde{u}_k\,dx + \sum_{j=1}^{k-1}\int_DA\nabla\tilde{u}_j\cdot\nabla\tilde{u}_j\,dx\right).
\end{multline*}
This, using once again the inductive hypothesis, proves \eqref{e:ortho} and concludes the proof.
\end{proof}

\begin{proof}[Proof of Proposition \ref{p:quasiminU}]
Let  $\tilde{U}$ be a vector satisfying the hypothesis of Proposition \ref{p:quasiminU} and let $V\in H^1_0(D,\R^k)$ be the vector given by Lemma \ref{l:ortho} and obtained by orthonormalizing $\tilde{U}$. Using $V$ as a test function in \eqref{e:carU} and then Lemma \ref{l:ortho}, we have
\begin{align*}
\int_DA\nabla U\cdot\nabla U\,dx + \Lambda|\{|U|>0\}| &\leq \int_DA\nabla V\cdot\nabla V\,dx + \Lambda|\{|V|>0\}| \\
& \leq (1+\overline{C}_k\|U-\tilde{U}\|_{L^1})\int_DA\nabla\tilde{U}\cdot\nabla\tilde{U}\,dx + \Lambda|\{|\tilde{U}|>0\}|,
\end{align*}
where we have used in the last inequality that $\{|V|>0\} \subset \{|\tilde{U}|>0\}$ (which holds by construction of $V$).
\end{proof}

We now conclude the proof of Theorem \ref{t:main1}.

\begin{proof}[Proof of Theorem \ref{t:main1}]
The proof follows from Proposition \ref{p:quasiminU} and Theorem \ref{t:main2} (see also Lemma \ref{l:boundedness}).
\end{proof}

%As a consequence of Proposition \ref{p:quasiminU} we obtain that the eigenfunctions on an optimal shape to the problem \eqref{e:shapeopt} are Lipschitz continuous.
%
%\begin{prop}
%Let $\O^\ast\subset D$ be a quasi-open set solution to the problem \eqref{e:shapeopt}. Then the vector $U=(u_1,\dots,u_k)$ of the first $k$ eigenfunctions on $\O^\ast$ is locally Lipschitz continuous in $D$. In particular, every solution $\O^\ast$ to the problem \eqref{e:shapeopt} is an open set. 
%\end{prop}

\bigskip\bigskip
\noindent {\bf Acknowledgments.} 
This work was partially supported by the French Agence Nationale de la Recherche (ANR) with the projects GeoSpec (LabEx PERSYVAL-Lab, ANR-11-LABX-0025-01) and the project SHAPO (ANR-18-CE40-0013).

\bibliographystyle{plain}
\bibliography{bib-t}

\end{document}